\numberwithin{equation}{section}
\theoremstyle{definition} \newtheorem{Def}{Definition}[section]
\theoremstyle{plain}\newtheorem{Thm}[Def]{Theorem}
\theoremstyle{plain}\newtheorem{Prop}[Def]{Proposition}
\theoremstyle{definition}\newtheorem{Rem}[Def]{Remark}
\theoremstyle{definition}\newtheorem{Ex}[Def]{Example}
\theoremstyle{plain}\newtheorem{Lem}[Def]{Lemma}
\theoremstyle{plain}\newtheorem{Cor}[Def]{Corollary}
\theoremstyle{remark} \newtheorem*{Claim*}{Claim}
\newcommand{\R}{\mathbb{R}}
\newcommand{\N}{\mathbb{N}}
\newcommand{\Z}{\mathbb{Z}}
\newcommand{\C}{\mathbb{C }}
\newcommand{\T}{\mathbb{T}^3}
\newcommand{\tor}[1]{\mathbb{T}^{#1}}
\newcommand{\Prob}{\mathbb{P}}
\newcommand{\E}[1]{\mathbb{E}\left[#1\right]}
\newcommand{\EX}[2]{\mathbb{E}_{#1}\left[#2\right]}
\newcommand{\V}[1]{\mathrm{Var}{\left[#1\right]}}
\newcommand{\eqLaw}{ \overset{\mathcal{L}}{=} }
\newcommand{\Law}{\xrightarrow[]{\mathcal{L}} }
\newcommand{\norm}[1]{\left\lVert#1\right\rVert}
\newcommand{\scal}[2]{\langle #1,#2\rangle}
\newcommand{\eps}{\varepsilon}
\newcommand{\ind}[1]{\mathbb{I}
\left\{#1\right\}}
\newcommand{\notcon}[3]{ {#1} \not\equiv {#2} \pmod {#3}  }
\newcommand{\Nn}{\mathcal{N}_n}
\newcommand{\Hyp}[5]{{}_{#1}F_{#3}\left(#2;#4;#5\right)}
\newcommand{\etr}[1]{\mathrm{etr}\left(#1\right)}
\DeclareMathOperator{\proj}{proj}
\DeclareMathOperator{\Leb}{Leb}
\DeclareMathOperator{\Id}{Id}
\DeclareMathOperator{\bX}{\mathbf{X}}
\DeclareMathOperator{\bT}{\mathbf{T}}
\DeclareMathOperator{\Tr}{tr}
\begin{document}

\title{\normalsize{\textbf{\uppercase{Matrix Hermite polynomials, Random determinants \\ and the geometry of Gaussian fields}}}}
\date{\today}
\author[*]{\textsc{Notarnicola} Massimo}
\affil[*]{\textit{Unit\'e de Recherche en Math\'ematiques, Universit\'e du Luxembourg}}
\maketitle

\abstract{We study generalized Hermite polynomials with rectangular matrix arguments arising in multivariate statistical analysis and the theory of zonal polynomials. We show that these are well-suited for expressing the Wiener-It\^{o} chaos expansion of functionals of the spectral measure associated with Gaussian matrices. In particular, we 
obtain the Wiener chaos expansion of Gaussian determinants of the form 
$\det(XX^T)^{1/2}$ and prove that, in the setting where the rows of $X$ are i.i.d.~centred Gaussian vectors with a given covariance matrix, its projection coefficients admit a geometric interpretation in terms of intrinsic volumes of ellipsoids, thus extending the content of \cite{Kab14} to arbitrary chaotic projection coefficients. Our proofs are based on a crucial relation between generalized Hermite polynomials and generalized Laguerre polynomials. In a second part, we introduce the matrix analog of the classical Mehler's formula for the Ornstein-Uhlenbeck semigroup and prove that matrix-variate Hermite polynomials are eigenfunctions of these operators. As a byproduct, we derive an orthogonality relation for Hermite polynomials evaluated at correlated Gaussian matrices. We  apply our results to vectors of independent arithmetic random waves on the three-torus, proving in particular a CLT in the high-energy regime for a generalized notion of total variation on the full torus. }

\vspace{1cm}

\noindent \textbf{Keywords:} Generalized Hermite polynomials, Gaussian random matrices, Zonal polynomials, Wiener chaos expansions, Intrinsic and mixed volumes, Arithmetic Random Waves, Limit Theorems \\[0.2cm]
\noindent \textbf{AMS MSC 2010:} 60G60, 60B10, 60D05, 58J50, 35P20

\sloppy
\paragraph{Notation.} For integers $\ell,n\geq 1$, we write $[n]:=\{1,\ldots,n\}$ and $\R^{\ell \times n}$ to indicate the $\ell n$-dimensional vector space of $\ell\times n$ matrices with entries in $\R$ with $\Id_n$ denoting the identity matrix of dimension $n$. We write $\mathcal{P}_{n}(\R)$ for the space of positive-definite matrices of dimension $n$. For $X \in \R^{\ell \times n}$, we denote by $\mathrm{Vec}(X)$ its vectorisation, that is the vector in $\R^{\ell n}$ obtained from $X$ by juxtaposing its columns and $\etr{X}:= e^{\Tr(X)}$, where $\Tr(X)$ is the trace of $X$.
We write  $\phi^{(\ell,n)}$ for the probability density function of $X \in \R^{\ell \times n}$ with i.i.d. standard normal entries, given by 
\begin{gather*}
\phi^{(\ell,n)}(X) = (2\pi)^{-n\ell/2} \etr{-2^{-1}XX^T }.
\end{gather*}
In this case, we write
$X \sim \mathscr{N}_{\ell \times n}(0 , \Id_{\ell}\otimes \Id_n)$ and refer to it as the standard normal matrix distribution. Here, $\otimes$ denotes the usual Kronecker product of  matrices. When $\ell=n=1$, we write $\phi^{(1,1)}=:\phi$ for the standard Gaussian density on $\R$. 

For numerical sequences $\{a_n\},\{b_n\}$, we write $a_n=O(b_n)$ or $a_n \ll b_n$ to indicate that there exists an absolute constant $C>0$ such that $|a_n| \leq C|b_n|$ and $a_n=o(b_n)$ to indicate that  $a_n/b_n\to0$ as $n \to \infty$.
Throughout this paper, we assume that every random object is defined on a common probability space $(\Omega,\mathcal{F},\Prob)$ and write $\E{\cdot}$ and $\V{\cdot}$ for the mathematical expectation and variance with respect to $\Prob$, respectively.

\newpage
\tableofcontents
\section{Introduction}
In   applications to stochastic geometry dealing with the asymptotic analysis of local geometric quantities associated with Gaussian random fields on manifolds, one is often confronted   with expressions involving quantities of the type $F(X)$, where $X$ is a rectangular centred Gaussian matrix and $F$ is a certain \textit{spectral} function, that is, $F$ only depends on the  spectral measure associated with the matrix $XX^T$. For instance, if $Z=\{Z(x):x\in \mathcal{M}\}$ is a $\ell$-dimensional stationary Gaussian field on a manifold $\mathcal{M}$ of dimension $n$ (with $1\leq \ell \leq n$), the \textit{nodal volume} of $Z$ over a region $R \subset \mathcal{M}$ has typically the form
\begin{eqnarray*}
\int_{R} \delta_0(Z(x)) F(J_Z(x)) \mathrm{vol}_{\mathcal{M}}(dx),
\end{eqnarray*}
where $\delta_0$ indicates the Dirac mass at zero, $J_Z(x)$ stands for the Jacobian matrix of $Z$ computed at $x$ and $F(X) = \sqrt{\det{(XX^T)}}$, see for instance \cite{Wig10,MPRW16,MRW17,NPR17,Not20} for some distinguished examples.
While objects of this type are amenable to analysis by Wiener-It\^{o} chaos expansions (which involves in particular the decomposition of $F(J_Z(x))$ into Hermite polynomials having the entries of $J_Z(x)$ as arguments, see the references above), it is to be expected that such a technique will generate combinatorially untractable expressions for large values of the dimensions $\ell$ and $n$ (see for instance \cite{DEL19} and  \cite{Not20}). The aim of this paper is to tackle directly such a difficulty by initiating a systematic study of chaotic expansions for spectral random variables $F(X)$ as above by using \textit{matrix-variate Hermite polynomials}, that is, a collection of orthogonal polynomials with matrix entries which are indexed by partitions of integers, obtained by orthogonalizing matrix monomials of the type $\Tr([XX^T]^s)$ with respect to the law of a Gaussian matrix. We will see that matrix-variate Hermite polynomials inherit the rich combinatorial structure and actually can be defined in terms of \textit{zonal polynomials}  introduced in \cite{J61}, thus allowing one to deduce explicit formulae in any dimension. We now  describe the principal achievements of the present work.
\begin{enumerate}[label = \textbf{(\alph*)}]
\item In \cite{Thangavelu} (see also the related work \cite{Kochneff}), the author studies Hermite expansions of functions of the form $F(x)=f_0(\norm{x})P(x)$ on $\R^n$, where $f_0$ is a function depending only on the norm $\norm{x}$ and $P$ is a harmonic polynomial. In particular, in such a work, the author provides explicit formulae for the projection coefficients associated with the Wiener-It\^{o} chaos expansion of functionals $F$ as above in terms of \textit{Laguerre polynomials} on the real line. In Theorem \ref{ThmWC}, we extend this framework by studying matrix-Hermite expansions of radial functionals of the type $F(X) = f_0(XX^T)$ on matrix spaces. Our results involve \textit{generalized Laguerre polynomials with matrix argument}, thus yielding a natural counterpart to the work by Thangavelu \cite{Thangavelu} in higher dimensions.
\item In \cite{Kab14} (see Theorem 1.1 therein), Kabluchko and Zaporozhets establish a formula for the expected value of Gaussian determinants of the form 
$F(X) = \sqrt{\det(XX^T)}$ in terms of mixed volumes and intrinsic volumes of ellipsoids associated with the covariance matrices of the underlying Gaussian vectors, yielding in particular an expression for the projection of $F$ onto the Gaussian Wiener chaos of order zero associated with $X$ (which corresponds to the expectation). In Theorem \ref{CoeffSigma} and Theorem \ref{InterVol} of the present paper, we substantially extend their framework by considering \textit{arbitrary} projection coefficients of the form $\E{F(X) H_{\kappa}^{(\ell,n)}(X)}$ (where $X$ is a centred Gaussian matrix of dimension $\ell \times n$) associated with such random determinants. Our results can be formulated using integrations on the so-called \textit{Stiefel manifold} (see Theorem \ref{CoeffSigma}), which can subsequently  be interpreted in terms of mixed and intrinsic volumes (see Theorem \ref{InterVol}).
\item In Section \ref{SecOrtho}, we introduce a 
collection of  operators on matrix spaces via a \textit{Mehler-type} formula, whose definition is amenable to that of the classical Ornstein-Uhlenbeck semigroup on the Euclidean space $\R^n$. In Theorem \ref{Action} we provide a characterization of   matrix-Hermite polynomials as the eigenfunctions of these operators, thus yielding a direct analog of the action of the Ornstein-Uhlenbeck semigroup on
classical Hermite polynomials on the real line. 
We subsequently use Theorem  \ref{Action} in order to deduce an intrinsic orthogonality relation between two matrix-Hermite polynomials evaluated in correlated Gaussian matrices. Such a result extends the classical orthogonality relation for matrix-Hermite polynomials as well as the 
case of Hermite polynomials on the real line.
Conjecturally, the objects and techniques introduced in Section \ref{SecOrtho} generate a basis for a special Malliavin Calculus on matrix spaces via the introduction of further operators, such as  Malliavin derivatives, adjoints and generators of the Ornstein-Uhlenbeck semigroup (see e.g. \cite{N06,NP12}). Such a program of study however   largely falls outside the scope of the present paper and is left open for further research.
\item In Section \ref{SecApp}, we apply   our results to the study of the \textit{generalized total variation} of multi-dimensional Gaussian random fields, defined as the integral of the square root of the Gramian determinant of its normalized Jacobian matrix. More specifically, we study the high-energy behaviour of the generalized total variation of multiple independent \textit{Arithmetic Random Waves} on the three-torus. In particular, in Theorem \ref{ARW} we establish its expected mean, an asymptotic law for its variance and a Central Limit Theorem for the suitably normalized total variation. Our arguments rely on the expansion in matrix-Hermite polynomials of the total variation, allowing us to prove that its probabilistic fluctuations are entirely characterized by its projection on the second Wiener chaos. Throughout this application, we also make use of variance expansions of radial functionals by means of its projection coefficients (see Proposition \ref{VarEx}). Our findings are to be compared with Theorem 1 of \cite{PR16}, where the authors prove a CLT for the \textit{Leray measure} of Arithmetic Random Waves on the two-torus. 
\end{enumerate}

The organization of the paper is as follows: In Section \ref{Prel}, we present preliminary notions that will be used in our proofs, notably on zonal polynomials and generalized Laguerre polynomials (Section \ref{SecZP}), 
 polar matrix factorizations (Section \ref{SecPD}) and
tools from integral geometry such as mixed volumes, intrinsic volumes of convex bodies and general facts about ellipsoids (Section \ref{SecGeom}). Our main contributions are presented in  Section \ref{SecMainResults}. Finally, the entire Section \ref{SecProofs} is devoted to the proofs of our results.   

\medskip
\noindent\textit{Acknowledgment.}  The author thanks Prof. Giovanni Peccati for his guidance throughout this work and acknowledges support from the 
Luxembourg National Research Fund PRIDE15/10949314/GSM.

\section{Preliminaries}\label{Prel}
\subsection{Zonal polynomials and generalized Laguerre polynomials}\label{SecZP}
\underline{\textit{Zonal polynomials.}}
Zonal polynomials with matrix argument were introduced in \cite{J61}, using group representation theory, as certain homogeneous symmetric functions of the eigenvalues (also called the \textit{latent roots}) of the matrix. 
We give a brief overview of zonal polynomials and their properties; the reader is referred for instance to the books \cite{BFZP} and \cite{Ch2012} for a thorough introduction to zonal polynomials.
Let us now fix   integers $\ell\geq 1$  and $k\geq 0$. We write $\kappa \vdash k$ to denote a \textit{partition} $\kappa$ of $k$ into no more than $\ell$ integer parts (note that such a notation  does not involve the integer  
$\ell$, whose role should be understood from the context), that is 
\begin{gather*}
\kappa=(k_1,\ldots,k_{\ell}) , \quad k_1\geq k_2\geq \ldots \geq k_{\ell}> 0, \quad k_1+\ldots+k_{\ell}=k .
\end{gather*}
For instance, if $\ell=1$, then $\kappa=(k)$ is the only partition of an integer $k$; if $\ell\geq 2$, then $\kappa = (2)$ and $\kappa=(1,1)$ are the only partitions of $k=2$. Sometimes it is useful to represent the partition $\kappa \vdash k$ as $\kappa=(1^{\nu_1} 2^{\nu_2} \ldots k^{\nu_k})$ to indicate that the integer $j$ occurs with multiplicity $\nu_j$; in particular $\nu_1+2\nu_2+\ldots+k\nu_k=k$. With this notation, we have for instance $(1,1)=(1^2) \vdash 2$ and $ (1,2,3,3)=(1^{1} 2^1 3^2) \vdash 9$.

\medskip
Let  $S \in \R^{\ell \times \ell}$ be a symmetric  matrix with eigenvalues $s_1,\ldots,s_{\ell}$. For an integer $k\geq 1$, we denote by $\mathrm{Pol}_{k}(S)$ the space of homogeneous polynomials of degree $k$ in the $\ell(\ell+1)/2$ variables of $S$. For an invertible matrix $L \in \R^{\ell \times \ell}$, the transformation $S \to LSL^T$ induces a representation $\pi$ of $\mathrm{GL}_{\ell}(\R)$ into the vector space $\mathrm{GL}(\mathrm{Pol}_{k}(S))$ of isomorphisms from $\mathrm{Pol}_{k}(S)$ to itself (\cite[Eq.(A.2.1)]{Ch2012}): 
\begin{gather*}
\pi : \mathrm{GL}_{\ell}(\R) \to 
\mathrm{GL}(\mathrm{Pol}_{k}(S))  \ ; \quad 
L \to \pi(L) \ ,
\end{gather*}
given by 
\begin{gather*}
\pi(L)(P):= P(L^{-1}S(L^{-1})^T)  .
\end{gather*}
It can be shown that $\mathrm{Pol}_k(S)$ can be decomposed as direct sum (\cite[p.297]{Ch2012})
\begin{gather}\label{Eq:Pol}
\mathrm{Pol}_k(S) = \bigoplus_{\kappa \vdash k} V_{\kappa}(S)  ,
\end{gather}
where $\{V_{\kappa}(S): \kappa \vdash k\}$ are irreducible and $\pi$-invariant subspaces.  Since $\Tr(S)^k$ is a homogeneous symmetric polynomial of degree $k$ in the eigenvalues of $S$, it can accordingly be decomposed in the spaces $V_{\kappa}(S)$ as follows (\cite[Eq.(4.3.38)]{BFZP}), 
\begin{gather}\label{defZon}
\Tr(S)^k = (s_1+\ldots+s_{\ell})^k = \sum_{\kappa \vdash k} C_{\kappa}(S),
\end{gather}
where $C_{\kappa}(S)$ denotes the \textit{zonal polynomial} associated with the partition $\kappa$ of $k$, that is, $C_{\kappa}(S)$ is the projection of $\Tr(S)^k $ onto the space $V_{\kappa}(S)$.
Applying \eqref{defZon} with $\ell=1$ gives $C_{(k)}(s)=s^k$, so that zonal polynomials can be interpreted as a  generalization of classical monomials. In particular, evaluating at $s=1$ yields $C_{(k)}(1)=1$. 
Zonal polynomials satisfy a generalized binomial formula (\cite[Eq.(4.5.1)]{BFZP}),  
\begin{gather}\label{Bin}
\frac{C_{\kappa}(S+\Id_{\ell})}{C_{\kappa}(\Id_{\ell})} = \sum_{s=0}^{k} \sum_{\sigma \vdash s}
{\kappa \choose \sigma} \frac{C_{\sigma}(S)}{C_{\sigma}(\Id_{\ell}) }  ,  \quad \kappa \vdash k.
\end{gather}
This relation in particular defines the generalized binomial coefficients ${\kappa \choose \sigma}$. Taking $S=a\Id_{\ell}$ for $a \in \R$ in \eqref{Bin} yields
\begin{gather*}
\frac{C_{\kappa}((a+1)\Id_{\ell})}{C_{\kappa}(\Id_{\ell})} = 
\sum_{s=0}^{k} 
\sum_{\sigma \vdash s}
{\kappa \choose \sigma} \frac{C_{\sigma}(a \Id_{\ell})}{C_{\sigma}(\Id_{\ell}) } ,
\end{gather*} 
so that, using the homogeneity property of zonal polynomials gives 
\begin{gather*}
(a+1)^{k} = 
\sum_{s=0}^{k}  
\sum_{\sigma \vdash s} a^s
{\kappa \choose \sigma}   .
\end{gather*} 
In particular, using the usual binomial formula for real numbers on the left-hand side, one deduces a relation linking classical and generalized binomial coefficients (\cite[Eq.(4.5.2)]{BFZP}):
\begin{gather*}
{k \choose s} 
=  \sum_{\sigma \vdash s}
{\kappa \choose \sigma} .
\end{gather*} 
A table with generalized binomial coefficients up to order $5$ can be found in Table 4.4.1 of \cite{BFZP}. 
For $X\in \R^{\ell\times n}$, zonal polynomials associated with partition $\kappa \vdash k$ and matrix argument $XX^T$ can be decomposed as (\cite[Theorem 4.3.6]{BFZP})
\begin{gather}\label{ZonHom}
C_{\kappa}(XX^T) = \sum_{(1^{\nu_1} 2^{\nu_2} \ldots k^{\nu_k}) \vdash k   } z_{\kappa  \nu}^{(k)} t_1(X)^{\nu_1}\ldots t_k(X)^{\nu_k},
\end{gather}
where 
\begin{gather}\label{ts}
t_{s}(X) := \Tr{([XX^T]^s)}, \quad s\geq 1
\end{gather}
 and $z_{\kappa \nu}^{(k)}$ are numerical constants. 
Writing $t_j:=t_j(X)$, the zonal polynomials associated with partitions up to order $3$ are given by (see e.g. \cite[Table 4.3.1]{BFZP})
\begin{gather*}
C_{(1)}(XX^T) = t_1 \\
C_{(2)}(XX^T) = \frac{1}{3}(t_1^2+2t_2) , \quad C_{(1,1)}(XX^T) = \frac{2}{3}(t_1^2-t_2)\\
C_{(3)}(XX^T) = \frac{1}{15}(t_1^3+6t_1t_2+8t_3) ,\quad 
C_{(2,1)}(XX^T) = \frac{3}{5}(t_1^3+t_1t_2-2t_3) \\
C_{(1,1,1)}(XX^T) = \frac{1}{3}(t_1^3-3t_1t_2+2t_3).
\end{gather*}
In particular, since for every $j \in [k]$, $t_j(X)^{\nu_j}$ is a homogeneous polynomial of degree $2j\nu_j$ in the entries of $X$, it follows from \eqref{ZonHom} that $C_{\kappa}(XX^T)$ is a homogeneous polynomial of degree $2k$ in the entries of $X$,  
\begin{gather}\label{HOM}
C_{\kappa}(XX^T) = \sum_{|\alpha|=2k} z_{\alpha}^{\kappa} \prod_{i=1}^{\ell} \prod_{j=1}^n X_{ij}^{\alpha_{ij}},
\end{gather}
where $\alpha=(\alpha_{ij}) \in \N^{\ell \times n}$ is a multi-index such that $|\alpha|=\sum_{i=1}^{\ell}\sum_{j=1}^n \alpha_{ij}=2k$ and $z_{\alpha}^{\kappa}$ is a numerical constant depending on $\alpha$ and $\kappa$.
Zonal polynomials evaluated at the identity matrix $\Id_{\ell}$ can be computed to be (\cite[Eq.(A.2.7)]{Ch2012})
\begin{gather*}
C_{\kappa}(\Id_{\ell}) 
= 2^{2k}k! \bigg(\frac{\ell}{2}\bigg)_{\kappa}
\frac{\prod_{i<j}^p (2k_i-2k_j-i+j)}{\prod_{j=1}^p (2k_j+p-j)!},
\end{gather*}
where $p=p(\kappa)$ is the number of non-zero parts in $\kappa$, and for $a \in \C$ , $(a)_{\kappa} $ stands for the generalized Pochammer symbol (\cite[Eq.(A.2.4)]{Ch2012})
\begin{gather}\label{PochGen}
(a)_{\kappa} := \prod_{j=1}^{\ell} \bigg(a-\frac{j-1}{2}\bigg)_{k_j} , \quad 
(a)_{n} = a(a+1)\cdots(a+n-1)
\end{gather}
defined in terms of classical  Pochammer symbols $(a)_n$.
The product of two zonal polynomials associated with partitions $\tau \vdash t$ and $\sigma \vdash s$ respectively, is given by (\cite[Eq.(4.3.65)]{BFZP})
\begin{gather}\label{akt}
C_{\tau}(S) C_{\sigma}(S) = \sum_{\kappa \vdash t+s} a_{\tau,\sigma}^{\kappa} C_{\kappa}(S),
\end{gather}
for some uniquely determined coefficients $a_{\tau,\sigma}^{\kappa}$. A table for these coefficients is  found in Table 4.3.2(a) of \cite{BFZP}.
Moreover, for positive-definite matrices $S$ and $T$, zonal polynomials have the property (\cite[Eq.(4.3.18)]{BFZP})
\begin{gather}\label{Inv}
C_{\kappa}(S^{1/2}TS^{1/2})=C_{\kappa}(ST)=C_{\kappa}(TS)=C_{\kappa}(T^{1/2}ST^{1/2}).
\end{gather}

\noindent\underline{\textit{Generalized Laguerre polynomials.}}
For a symmetric matrix $S \in \R^{\ell \times \ell}$, the generalized Laguerre polynomial  of order $\gamma>-1$ associated with a partition $\kappa$ of $k$ and   matrix variable $S$ is defined as (\cite[Eq.(4.6.5)]{BFZP})
\begin{gather}\label{LagZon}
L_{\kappa}^{(\gamma)}(S) = \bigg(\gamma+\frac{\ell+1}{2}\bigg)_{\kappa} C_{\kappa}(\Id_{\ell})
\sum_{s=0}^{k} \sum_{\sigma \vdash s}
{\kappa \choose \sigma} \frac{(-1)^s}{(\gamma+\frac{\ell+1}{2})_{\sigma}}\frac{C_{\sigma}(S)}{C_{\sigma}(\Id_{\ell})}.
\end{gather}
The first Laguerre polynomials  associated with partitions up to order three are listed in (4.6.8) of \cite{BFZP}.
The generalized Laguerre polynomials define a class of orthogonal polynomials on $\mathcal{P}_{\ell}(\R)$ with respect to the weight function $\etr{-R}\det(R)^{\gamma}$, that is, for every integers $k,l\geq0$ and every partitions $\kappa \vdash k, \sigma \vdash l$, one has (\cite[Theorem 4.6.4]{BFZP})
\begin{gather}\label{LagOrt}
\int_{\mathcal{P}_{\ell}(\R)} L_{\kappa}^{(\gamma)}(R) L_{\sigma}^{(\gamma)}(R) \etr{-R}\det(R)^{\gamma} \nu(dR) \notag\\  
= \ind{\kappa=\sigma} \times k! C_{\kappa}(\Id_{\ell})
\Gamma_{\ell}\left(\gamma+\frac{\ell+1}{2}\right) \left(\gamma+\frac{\ell+1}{2}\right)_{\kappa}, \label{LagOrt}
\end{gather} 
where $\nu(dR)$ denotes the Lebesgue measure on $\mathcal{P}_{\ell}(\R)$. Here,
for $a \in \R$, $\Gamma_{\ell}(a)$ denotes the multivariate Gamma function defined by
\begin{gather*}
\Gamma_{\ell}(a):= \pi^{\ell(\ell-1)/4} \prod_{i=1}^{\ell} \Gamma(a -2^{-1}(i-1)) \ , \quad \ell \geq 1,
\end{gather*}
where $\Gamma(\cdot)$ is the usual Gamma function.
A useful formula that we will use at several occasions is the following (see e.g. \cite[Theorem 4.4.1]{BFZP}) 
\begin{gather}\label{intzonal}
\int_{\mathcal{P}_{\ell}(\R)}
\etr{-AR}\det(R)^{t-\frac{\ell+1}{2}}C_{\kappa}(RB) \nu(dR) 
= (t)_{\kappa} \Gamma_{\ell}(t) \det(A)^{-t}C_{\kappa}(BA^{-1}),
\end{gather}
where $A\in \C^{\ell \times \ell}$ is a  complex symmetric matrix with positive real part, $B\in \C^{\ell \times \ell}$ is a  complex symmetric matrix and $t$ is such that $\Re(t)>(\ell-1)/2$.

\subsection{Polar decomposition for matrices}\label{SecPD}
Let $1\leq \ell \leq n$ be integers. For $X=(X_{ij}) \in \R^{\ell \times n}$, we denote by $dX:=(dX_{ij})$ its associated differential matrix. We endow the spaces $\R^{\ell \times n}$ and $\mathcal{P}_{\ell}(\R)$ with the measures
\begin{eqnarray*}
(dX) := \prod_{i=1}^{\ell}\prod_{j=1}^{n} dX_{ij} \ , \quad 
\nu(dX) := \prod_{1\leq i \leq j \leq \ell} dX_{ij}
\end{eqnarray*}
respectively. 
Assuming that the rows of $X$ are linearly independent, the \textit{polar decomposition} of  $X$ is uniquely given by (see for instance  \cite{Downs})
\begin{gather}\label{Pol}
X =   R^{1/2} \cdot U \ , \quad R = XX^T \in \mathcal{P}_{\ell}(\R) \ , \quad 
\qquad U =  (XX^T)^{-1/2}X \in O(n,\ell),
\end{gather}  
where $R^{1/2}$ denotes the positive square root of $R$, that is the unique matrix $B$ such that $B^2=R$. We also define $R^{-1/2}:=(R^{1/2})^{-1}$.
The space $O(n,\ell)$ in \eqref{Pol} denotes the so-called \textit{Stiefel manifold} of matrices $Y \in \R^{\ell \times n}$ such that $YY^T= \Id_{\ell}$, that is, $Y$ has orthonormal rows.  An element of $O(n,\ell)$ is called an \textit{$\ell$-frame in $\R^n$}, see for instance \cite[p.8]{Ch2012}.The matrices $R$ and $U$ in \eqref{Pol} are seen to be the radial part and orientation of $X$, respectively and hence the decomposition $X=R^{1/2}U$
is a generalization of the standard polar factorization for vectors (obtained for $\ell=1$).

\medskip
\noindent\underline{\textit{Haar measure on the Stiefel manifold.} }
The family of Stiefel manifolds $O(n,\ell)$ contains as special cases the $n$-shpere $O(n,1) = \mathbb{S}^{n-1}$ and the orthogonal group $O(n,n) = O(n)$. The space $O(n,\ell)$ is the compact manifold of dimension $n\ell - \ell - \ell(\ell-1)/2$  realized as the homogeneous space $O(n)/O(n-\ell)$. The Stiefel manifold is endowed with a left and right-invariant Haar measure $\mu$, that is, for every $P \in O(n)$ and every $Q \in O(\ell)$, 
\begin{gather*}
\mu(UP) = \mu(U) = \mu(QU) ,
\end{gather*}
for every $U \in O(n,\ell)$. Remark that our notation of $\mu$ is independent of $\ell$ and $n$, and should be understood from the context. We refer the reader  for instance to \cite{Ch2012} or \cite{Muir} for details on the construction of such a measure. 
The total volume of $O(n,\ell)$ is given by (\cite[Eq.(1.4.8)]{Ch2012})
\begin{gather*}
v(n,\ell) := \mu(O(n,\ell)) = \int_{O(n,\ell)} \mu(dU) 
= \frac{2^{\ell}\pi^{n\ell/2}}{\Gamma_{\ell}(n/2)} . 
\end{gather*}
The normalised measure 
\begin{eqnarray}
\tilde{\mu}(dU) := \frac{1}{v(n,\ell)} \mu(dU)
\end{eqnarray}
hence defines a left and right invariant probability measure on $O(n,\ell)$. We call it the \textit{Haar probability measure} on $O(n,\ell)$.

\subsection{Intrinsic volumes, mixed volumes and ellipsoids}\label{SecGeom}

\noindent\underline{\textit{Intrinsic volumes and mixed volumes.}}
We present two important notions from integral geometry: intrinsic and mixed volumes. We mainly follow \cite{Schneider} for this part (see in particular Section 14.2 therein).
For an integer $n\geq1$, we  denote by $\mathbb{K}^n$ the set of convex bodies in $\R^n$. We write $\mathbb{B}_n$ for the unit ball in $\R^n$ and $\mathrm{vol}_n$ for the $n$-dimensional volume measure  in $\R^n$. For $K \in \mathbb{K}^n$ and $\eps >0$, we write 
\begin{gather*}
K^{\eps}:= K+\eps\mathbb{B}_n = \{ x\in \R^n: \mathrm{dist}(x,K) \leq \eps\}
\end{gather*}
for the parallel body of $K$ at distance $\eps$. \textit{Steiner's formula} (\cite[Eq.(14.5)]{Schneider}) asserts that its volume is a polynomial of degree $n$ in $\eps$,  
\begin{gather}\label{Steiner}
\mathrm{vol}_n(K^{\eps}) = \sum_{j=0}^n \eps^{n-j} \kappa_{n-j} V_j(K),
\end{gather}
where the coefficients $\{V_j(K), j=0,\ldots,n\}$ denote the \textit{intrinsic volumes} of $K$. We set $V_j(\emptyset):=0$. For instance, when $n=2$, $V_2(K)$ is the area, $V_1(K)$ is half the boundary length and $V_0(K)$ is the Euler characteristic of $K$. Moreover, for every $n\geq 1$, we have $V_n(K)=\mathrm{vol}_n(K)$, that is, the $n$-th intrinsic volume coincides with the $n$-dimensional volume measure. 
The intrinsic volumes of the unit ball $\mathbb{B}_n$ can be computed to be (\cite[Eq.(14.8)]{Schneider})
\begin{gather*}
V_j(\mathbb{B}_n) = {n \choose j} \frac{\kappa_{n}}{\kappa_{n-j}}, \quad \kappa_{n}= \frac{\pi^{n/2}}{\Gamma(1+n/2)}.
\end{gather*}
For an integer $1\leq j\leq n$, we denote by $G(n,j)$ the Grassmannian of $j$-dimensional linear subspaces of $\R^n$. It carries a unique invariant Haar probability measure $\nu_{n,j}$. One possible way to realize Grassmannians is as the quotient space $G(n,j) = O(n,j)/O(j)$, where two elements $U_1,U_2 \in O(n,j)$ are equivalent if and only if there exists an orthogonal matrix $Q \in O(j)$ such that $U_1= QU_2$, see for instance \cite[p.8-9]{Ch2012}. 
Intrinsic volumes admit a useful integral representation, known as \textit{Kubota's formula} (\cite[Eq.(6.11)]{Schneider}), 
\begin{gather}\label{Kubota}
V_j(K) = {n \choose j}\frac{\kappa_n}{\kappa_j \kappa_{n-j}}\int_{G(n,j)} \mathrm{vol}_j(K|\mathscr{L}) \nu_{n,j}(d\mathscr{L}),
\end{gather}
where $K|\mathscr{L}$ stands for the image of the orthogonal projection of $K$ onto $\mathscr{L} \in G(n,j)$, and integration is with respect to the Haar probability measure on $G(n,j)$.

Let $m \geq 1$ and consider $m$ convex bodies $K_1,\ldots,K_m \in \mathbb{K}^n$. Then, for real numbers $\lambda_1,\ldots, \lambda_m \geq0$, the $n$-dimensional volume of the Minkowski sum $\lambda_1K_1+\ldots \lambda_mK_m$ is a homogeneous polynomial of degree $n$ in the variables $\lambda_1,\ldots,\lambda_m$ (\cite[Eq.(14.7)]{Schneider}), 
\begin{gather*}
\mathrm{vol}_n(\lambda_1K_1+\ldots \lambda_mK_m)
= \sum_{i_1,\ldots,i_n=1}^m V(K_{i_1},\ldots,K_{i_n}) \lambda_{i_1}\cdots \lambda_{i_n}, 
\end{gather*} 
for uniquely determined symmetric coefficients $V(K_{i_1},\ldots,K_{i_n})$. These coefficients are called the \textit{mixed volumes} of the convex bodies $K_{i_1},\ldots,K_{i_n}$.  This formula is a generalization of  Steiner's formula in \eqref{Steiner}.
Whenever we have mixed volumes involving only two distinct convex bodies $K_1$ and $K_2$, we use the short-hand notation 
\begin{gather}\label{Eq:NotationVol}
V(\underbrace{K_1,\ldots,K_1}_{\ell \textrm{ times}}, 
\underbrace{K_2,\ldots,K_2}_{n-\ell \textrm{ times}}) 
=: V(K_1[\ell],K_2[n-\ell]) , \quad \ell \geq1.
\end{gather}
Intrinsic volumes of a convex body $K \in \mathbb{K}^n$ are related to mixed volume by the relation (\cite[Eq.(14.18)]{Schneider})
\begin{gather}\label{IntMix}
V_j(K) = \frac{{n \choose j}}{\kappa_{n-j}} V(K[j],\mathbb{B}_n[n-j]) , \quad j=1,\ldots, n
\end{gather}

\medskip
\noindent\underline{\textit{General facts about ellipsoids.}}
We will need some preliminaries about a particular type of convex bodies, namely ellipsoids, see e.g. \cite{Kab14}.
Let $\Sigma \in \R^{n\times n}$ be a non-singular symmetric matrix. 
We define the   ellipsoid $\mathcal{E}_{\Sigma}$ of $\R^n$ represented by the matrix $\Sigma$,\begin{gather*}
\mathcal{E}_{\Sigma}= \{x \in \R^n: x^T \Sigma^{-1}x \leq 1\},
\end{gather*}
  obtained as an affinity of the unit $n$-dimensional ball $\mathbb{B}_n$, that is $\mathcal{E}_{\Sigma}=\{ \Sigma^{1/2}y : y \in \mathbb{B}_n \} $. In particular, its $n$-dimensional volume is  given by
\begin{gather}\label{volell}
\mathrm{vol}_n(\mathcal{E}_{\Sigma})= \kappa_n \det(\Sigma)^{1/2}.
\end{gather}
For any non-degenerate linear transformation represented by a matrix $A \in \R^{n \times n}$, the ellipsoid $A\mathcal{E}_{\Sigma}= \{ Ax : x \in \mathcal{E}_{\Sigma}\}$
is represented by the matrix $A\Sigma A^T$, that is 
\begin{gather*}
A\mathcal{E}_{\Sigma}= \{  x\in \R^n : x^T (A\Sigma A^T)^{-1} x \leq 1 \} = \mathcal{E}_{A\Sigma A^T}.
\end{gather*} 
Let $\mathscr{L} \in G(n,\ell)$ be a $\ell$-dimensional linear subspace in $\R^n$ and denote by $L \in O(n,\ell)$ any matrix whose rows form an orthonormal basis of $\mathscr{L}$. Then, the image of the orthogonal projection of $\mathcal{E}_{\Sigma}$ onto $\mathscr{L}$, written $\mathcal{E}_{\Sigma}|\mathscr{L}$, is an ellipsoid in $\R^{\ell}$ that is represented by the matrix $L\Sigma L^T \in \R^{\ell\times \ell}$. In particlar, it follows from \eqref{volell} that its $\ell$-dimensional volume is
$\mathrm{vol}_{\ell}(\mathcal{E}_{\Sigma}|\mathscr{L})= \kappa_{\ell}\det(L\Sigma L^T)^{1/2}$.

\section{Main results}\label{SecMainResults}

\subsection{Wiener-chaos expansion of matrix-variate functions}\label{SecWC}

\noindent\underline{\textit{Hermite polynomials on the real line.}}
Let $m\geq 1$ be an integer and $\mathbf{X}=(X_1,\ldots,X_m)$ be a standard $m$-dimensional Gaussian vector. For  $\alpha=(\alpha_1,\ldots, \alpha_m) \in \N^m$, we write $\alpha!:= \alpha_1!\cdots \alpha_m!$ and $|\alpha|:= \alpha_1+\ldots + \alpha_m$ and define the \textit{multivariate Hermite polynomials} associated with the vector $(X_1,\ldots,X_m)$ as the tensor product of univariate Hermite polynomials, that is  
\begin{gather*}
H^{\otimes m}_{\alpha}(X_1,\ldots,X_m):= \prod_{l=1}^{m}   
H_{{\alpha}_l}(X_l),
\end{gather*}
where $H_{\alpha_l}$ denotes the Hermite polynomial of order $\alpha_l$ on the real line. It is well-known that the normalised Hermite polynomials 
$\{(k!)^{-1/2}H_k: k \geq 0\}$ form a complete orthonormal system of $L^2(\phi):=L^2(\R,\phi(z)dz)$ (see e.g. \cite{NP12}). 
This implies that the collection of normalised multivariate Hermite polynomials
\begin{gather}\label{Hm}
\mathbb{H}_{[m]}=\left\{  (\alpha!)^{-1/2}  H^{\otimes m}_{\alpha}  : \alpha \in \N^m \right\}
\end{gather}
form a complete orthonormal system of $L^2(\phi^{\otimes m})$, where $\phi^{\otimes m}$ stands for the standard $m$-dimensional Gaussian measure. In particular, every random variable 
$F \in L^2(\phi^{\otimes m})$ admits a unique decomposition 
\begin{gather}\label{WCVec}
F = \sum_{k \geq 0} \sum_{|\alpha|=k} \widehat{F}(\alpha) H^{\otimes m}_{\alpha} ,
\end{gather}
where 
\begin{gather}\label{CoeffVec}
\widehat{F}(\alpha) := (\alpha!)^{-1} \int_{\R^m} F(x_1,\ldots,x_m)
H^{\otimes m}_{\alpha}(x_1,\ldots,x_m)  \phi^{\otimes m}(dx_1,\ldots,dx_m) 
\end{gather}
denotes the Fourier-Hermite coefficients of $F$ associated with the multi-index $\alpha$. For $k\geq0$, we write 
\begin{gather}\label{Ck}
\mathbf{C}_k^{\mathbf{X}} = \overline{\mathrm{span}_{\R}}\{ H_{\alpha}^{\otimes m}(X_1,\ldots,X_m): |\alpha|=k\}
\end{gather}
for the closed linear subspace of  $L^2(\Prob)$ generated by multivariate Hermite polynomials of cumulative degree $k$. The space $\mathbf{C}_k^{\mathbf{X}}$ is the so-called \textit{$k$-th Wiener chaos} associated with the vector $\mathbf{X}=(X_1,\ldots,X_m)$. We have that $\mathbf{C}_0^{\mathbf{X}}=\R$. For $F \in L^2(\phi^{\otimes m})$, we denote by $\proj(F|\mathbf{C}_k^{\mathbf{X}})$ the projection of $F$ onto $\mathbf{C}_k^{\mathbf{X}}$, that is, \eqref{WCVec} can be rewritten as the $L^2(\Prob)$-converging series
\begin{gather*}
F = \sum_{k \geq 0} \proj(F | \mathbf{C}_k^{\mathbf{X}}). 
\end{gather*}
This decomposition is known as the \textit{Wiener-It\^{o} chaos expansion of $F$.}

\medskip
\noindent\underline{\textit{Matrix-variate Hermite polynomials.}}
Matrix-variate Hermite polynomials on the matrix space 
$\R^{\ell \times n}$ are introduced in \cite{Ch92} and admit an expansion in \textit{zonal polynomials}. More specifically, the matrix-variate Hermite polynomials associated with the partition $\kappa \vdash k$ of an integer $k\geq 0$, written $H_{\kappa}^{(\ell,n)}$, is given by (\cite[Eq. (4.11)]{Ch92}):\begin{gather}\label{HerZon}
H^{(\ell,n)}_{\kappa}(X) = 
k!C_{\kappa}(\Id_{\ell}) \sum_{s=0}^k 
\sum_{\sigma\vdash s} 
\sum_{\tau\vdash k-s} 
\frac{a^{\kappa}_{\tau,\sigma}}{(-2)^{k+s}(k-s)!}
\frac{C_{\sigma}(XX^T)}{s!(\frac{n}{2})_{\sigma}C_{\sigma}(\Id_{\ell})} , 
\end{gather}
where the coefficients $a^{\kappa}_{\tau,\sigma}$ are defined by the relation \eqref{akt} and $(\ell/2)_{\sigma}$ denotes the generalized Pochammer symbol, formally defined in \eqref{PochGen}. Zonal polynomials being generalizations of monomials, the expansion in \eqref{HerZon} is to be compared to the classical expansion of univariate Hermite polynomials in the basis of monomials (see e.g. \cite[p.19]{NP12})
\begin{gather*}
H_k(x) = \sum_{n=0}^{\lfloor k/2 \rfloor} \frac{k!(-1)^n}{n!(k-2n)!2^n}x^{k-2n}.
\end{gather*}
Alternatively, $H^{(\ell,n)}_{\kappa}$ are defined by Rodrigues formula (\cite[Eq.(4.9)]{Ch92})
\begin{gather}\label{Rod}
H^{(\ell,n)}_{\kappa}(X)\phi^{(\ell,n)}(X) =
4^{-k}\bigg(\frac{n}{2}\bigg)_{\kappa}^{-1}
C_{\kappa}(\partial X\partial X^T) \phi^{(\ell,n)}(X), 
\end{gather}
where, for $X = (X_{ij})\in \R^{\ell \times n}$, the differential matrix $\partial X$ is given by 
$\partial X = \left( \frac{\partial}{\partial{X_{ij}}}\right)$.
We note that \eqref{Rod} is a generalization of the classical well-known Rodrigues formula for univariate Hermite polynomials (see for instance \cite[Proposition 1.4.2]{NP12})
\begin{gather}\label{UniRod} 
H_k(x) \phi(x) = (-1)^k \frac{d^k}{dx^k}\phi(x) , \quad k \geq 0.
\end{gather}
Matrix-variate Hermite polynomials are linked to the generalized matrix-variate Laguerre polynomials by the relation (\cite[Eq.(5.16)]{Ch92} and \cite[Eq.(10)]{Hay})
\begin{gather}\label{RelHerLag}
\gamma_{\kappa}\cdot L_{\kappa}^{(\frac{n-\ell-1}{2})}(XX^T)
= H^{(\ell,n)}_{\kappa}(\sqrt{2}X), \quad \gamma_{\kappa}:=(-2)^{-k} \bigg(\frac{n}{2}\bigg)_{\kappa}^{-1}, \quad \kappa \vdash k.
\end{gather}
Moreover, matrix-variate Hermite polynomials are orthonormal on 
$\R^{\ell \times n}$ with respect to the matrix-normal density function $\phi^{(\ell,n)}$, that is (see e.g. \cite[Corollary 3]{Hay}) for every integers $k,l\geq0$ and every partitions $\kappa \vdash k,\sigma \vdash l$, 
\begin{gather}\label{HerOrtho}
\int_{\R^{\ell \times n}} H_{\kappa}^{(\ell,n)}(X)
H_{\sigma}^{(\ell,n)}(X) \phi^{(\ell,n)}(X) (dX)  
= \ind{\kappa=\sigma} \times
4^{-k}\bigg(\frac{n}{2}\bigg)_{\kappa}^{-1} k !C_{\kappa}(\Id_{\ell}) .
\end{gather}
Let now $X \sim \mathscr{N}_{\ell \times n}(0,\Id_{\ell}\otimes\Id_n)$ and write 
$s_1,\ldots,s_{\ell}$ for the eigenvalues of $XX^T$. The \textit{spectral measure} of $XX^T$ associated with the matrix $X$ is the measure
\begin{gather*}
\mu_X(ds):=\sum_{i =1}^{\ell} \delta_{s_i}(ds),
\end{gather*}
supported on the spectrum of $XX^T$, where $\delta_y$ is the Dirac mass at $y$. We write $L^2(\mu_X):=L^2(\Omega,\sigma(\mu_X),\Prob)$ to indicate the subspace of $L^2(\phi^{(\ell,n)}):=L^2(\R^{\ell \times n}, \phi^{(\ell,n)}(X)(dX))$ consisting of those random variables that are measurable with respect to the sigma algebra generated by $\mu_X$. By this, we mean the subspace of $L^2(\phi^{(\ell,n)})$ of random variables that are generated by elements of the type 
\begin{gather}\label{F}
\int_{\R } f(t) \mu_X(dt) =: \mu_X(f).
\end{gather}
Since matrix-variate Hermite polynomials in \eqref{HerZon} admit an expansion into zonal polynomials, they are themselves symmetric functionals of the eigenvalues $s_1 ,\ldots, s_{\ell} $. 
This fact together with the orthogonality relation \eqref{HerOrtho}, implies that the family of normalised matrix-variate Hermite polynomials
\begin{gather}\label{HerONB}
\mathbb{H}_{[\ell \times n]}:=\left\{    c(\kappa)^{-1/2} H_{\kappa}^{(\ell,n)} : \kappa \vdash k, k \geq 0 \right\}, \quad 
c(\kappa):=4^{-k}\bigg(\frac{n}{2}\bigg)_{\kappa}^{-1} k !C_{\kappa}(\Id_{\ell})
\end{gather}
forms an orthonormal system in 
$L^2(\mu_X)$.
In Appendix \ref{App:Complete}, we prove the following Proposition, stating that this system is also complete in $L^2(\mu_X)$.
 \begin{Prop}\label{Prop:Complete}
 The system $\mathbb{H}_{[\ell\times n]}$ is complete in $L^2(\mu_X)$.
 \end{Prop}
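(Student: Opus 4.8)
The goal is to show that every $F \in L^2(\mu_X)$ can be approximated in $L^2(\Prob)$ by finite linear combinations of the polynomials $H_\kappa^{(\ell,n)}$. Since $L^2(\mu_X)$ is, by definition, the closed subspace generated by random variables of the form $\mu_X(f) = \sum_{i=1}^\ell f(s_i)$ for suitable $f$, it suffices to prove that every such $\mu_X(f)$ lies in the closed span of $\mathbb{H}_{[\ell\times n]}$. Equivalently, writing $g(s_1,\dots,s_\ell) = \sum_{i=1}^\ell f(s_i)$, the task is to show that symmetric functions of the eigenvalues of $XX^T$ can be approximated by the symmetric polynomials $H_\kappa^{(\ell,n)}$. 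The plan is to reduce this to a classical completeness statement for generalized Laguerre polynomials on $\mathcal{P}_\ell(\R)$ via the change of variables $X \mapsto (R,U)$ given by the polar decomposition \eqref{Pol}.

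First I would invoke the polar factorization $X = R^{1/2}U$ with $R = XX^T \in \mathcal{P}_\ell(\R)$ and $U \in O(n,\ell)$, under which the Gaussian measure $\phi^{(\ell,n)}(X)(dX)$ disintegrates as a product of a measure on $\mathcal{P}_\ell(\R)$ proportional to $\etr{-R/2}\det(R)^{(n-\ell-1)/2}\nu(dR)$ and the Haar measure on $O(n,\ell)$; this is the standard matrix-variate change of variables (see \cite{Ch2012}, \cite{Muir}). Under this identification, $L^2(\mu_X)$ corresponds precisely to the space of $U$-independent functions, i.e. $L^2$ of the marginal on $\mathcal{P}_\ell(\R)$, and moreover only on the eigenvalues $s_1,\dots,s_\ell$ of $R$ — a weighted $L^2$ space over the ordered simplex with weight $w(s) = \prod_i e^{-s_i/2}s_i^{(n-\ell-1)/2}\prod_{i<j}|s_i-s_j|$. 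Via \eqref{RelHerLag}, the polynomials $H_\kappa^{(\ell,n)}(\sqrt 2\,X)$ are, up to nonzero constants $\gamma_\kappa$, exactly the generalized Laguerre polynomials $L_\kappa^{((n-\ell-1)/2)}(XX^T) = L_\kappa^{((n-\ell-1)/2)}(R)$, which by \eqref{LagOrt} are orthogonal on $\mathcal{P}_\ell(\R)$ with respect to the weight $\etr{-R}\det(R)^{(n-\ell-1)/2}$; after rescaling $R \mapsto R/2$ this matches the disintegrated Gaussian weight.

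The decisive step is therefore to establish that the generalized Laguerre polynomials $\{L_\kappa^{(\gamma)}(R): \kappa \vdash k,\ k\ge 0\}$ are \emph{complete} in $L^2(\mathcal{P}_\ell(\R), \etr{-R}\det(R)^\gamma\,\nu(dR))$ — or rather, in the subspace of symmetric functions of the eigenvalues, which is all we need. Since by \eqref{LagZon} the span of $\{L_\sigma^{(\gamma)}: \sigma \vdash s,\ s\le k\}$ coincides with the span of the zonal polynomials $\{C_\sigma(R): \sigma\vdash s,\ s \le k\}$, and by \eqref{defZon} the span of all zonal polynomials up to degree $k$ equals the span of $\{\Tr(R)^j : j \le k\}$ together with products — in fact the full ring of symmetric polynomials in $s_1,\dots,s_\ell$ — this reduces to: symmetric polynomials in the $s_i$ are dense in the weighted $L^2$ space. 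That in turn follows from the univariate fact that polynomials are dense in $L^2(\R_+, e^{-s}s^\gamma ds)$ (the weight has exponential decay, so moments determine measure and the classical density argument for Laguerre polynomials applies), combined with a tensorization/symmetrization argument to pass from $\R_+^\ell$ to symmetric functions on the simplex, noting the Vandermonde factor $\prod_{i<j}|s_i - s_j|$ is a bounded perturbation only locally but is itself a polynomial, so it causes no trouble — one approximates $g(s)\prod_{i<j}(s_i-s_j)$ by polynomials and divides back out where the Vandermonde is nonzero, handling its zero set by a cutoff.

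The main obstacle I anticipate is the rigorous passage from univariate polynomial density to the multivariate symmetric setting with the Vandermonde weight: one must be careful that the exponential weight $e^{-s/2}$ (rather than a compactly supported or sub-Gaussian weight) still forces determinacy of the moment problem in several variables, and that symmetrization does not destroy density. A clean route is to avoid this entirely and instead argue by a moment/Laplace-transform uniqueness: if $F \in L^2(\mu_X)$ is orthogonal to every $H_\kappa^{(\ell,n)}$, hence (by the span identities above) to every $C_\kappa(XX^T)$, hence to every $\Tr((XX^T)^j)$ and all their products — equivalently to $\etr{-T XX^T}$ for $T$ in a neighborhood of $0$ by expanding the exponential and using \eqref{intzonal}-type estimates to justify term-by-term integration — then the measure $F\,d\mu_X$ (restricted appropriately) has vanishing Laplace transform near the origin, and since the Gaussian weight provides enough decay for analyticity, $F = 0$ $\mu_X$-a.e. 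I would present the proof along these lines, with the Laplace-transform uniqueness as the crux, deferring the routine disintegration and span computations to citations of \cite{BFZP} and \cite{Ch2012}.
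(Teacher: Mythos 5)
Your final argument is essentially the paper's own proof: the paper also argues by duality, using that the spans of matrix-Hermite polynomials, zonal polynomials and trace monomials coincide to get $\E{F\,t_{j_0}(X)^{a_0}\cdots t_{j_M}(X)^{a_M}}=0$ for all finite products, and then tests against exponentials of the traces (the bounded characteristic functions $\exp\big(i\sum_j c_j t_j(X)\big)$ instead of your Laplace transform) to conclude $\E{F\mid \sigma(\mu_X)}=0$, while your first, discarded route via completeness of generalized Laguerre polynomials is not what the paper does. The one point you would need to tighten is the claim that orthogonality to products of traces is ``equivalently'' orthogonality to $\etr{-TXX^T}$: since $\Tr(TXX^T)^k=\sum_{\kappa\vdash k}C_{\kappa}(TXX^T)$ is not a symmetric function of the eigenvalues alone, you must either average over the orthogonal group, using $\int_{O(\ell)}C_{\kappa}(THRH^T)\tilde{\mu}(dH)=C_{\kappa}(T)C_{\kappa}(R)/C_{\kappa}(\Id_{\ell})$ together with the $\sigma(\mu_X)$-measurability of $F$, or simply test against $e^{-\sum_j c_j t_j(X)}$ directly, which is in effect what the paper does in its Fourier version.
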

Therefore, every $F \in L^2(\mu_X)$ admits a unique decomposition in the basis \eqref{HerONB}, 
\begin{gather}\label{Decomp}
F = \sum_{k\geq 0} \sum_{ \kappa \vdash k} \widehat{F}(\kappa) H_{\kappa}^{(\ell,n)} , 
\end{gather}
where 
\begin{gather}\label{Coeff}
\widehat{F}(\kappa) := c(\kappa)^{-1} \int_{\R^{\ell\times n}} F(X) H_{\kappa}^{(\ell,n)}(X) \phi^{(\ell,n)}(X)(dX)
\end{gather}
is the Fourier-Hermite coefficient of $F$ associated with the partition $\kappa$ and $c(\kappa)$ is as in \eqref{HerONB}.  
To state our result, we introduce some further notation.
For an integer $s\geq 0$ and $X \sim \mathscr{N}_{\ell\times n}(0,\Id_{\ell}\otimes \Id_n)$, we recall the notation  
$
t_{s}(X) := \Tr{([XX^T]^s)} 
$ introduced in \eqref{ts} and define the spaces   
\begin{gather*}
\mathcal{U}_0^X := \R , \quad \mathcal{U}_k^X := \overline{\mathrm{span}_{\R}}\left\{ \prod_{j=1}^m t_{s_j}(X) : s_1+\ldots+s_m \leq k , m \geq 1\right\}, \ k\geq1,
\end{gather*}
where the closure is with respect to $L^2(\mu_X)$.
By construction, we have that $ \mathcal{U}_k^X \subset \mathcal{U}_{k+1}^X$. We let 
\begin{gather*}
\mathbf{U}_{k}^X:=  \mathcal{U}_k^X  \ominus  \mathcal{U}_{k-1}^X  :=  \mathcal{U}_k^X  \cap  (\mathcal{U}_{k-1}^X)^{\perp}, 
\end{gather*}
that is, $\mathbf{U}_k^X$ is the space of those random variables in $\mathcal{U}_k^X$ that are orthogonal in $L^2(\Prob)$ to elements of $\mathcal{U}_{k-1}^X$. 
Expanding matrix-Hermite polynomials into zonal polynomials by \eqref{HerZon} and subsequently zonal polynomials into monomials of the type $t_s(X)$ by \eqref{ZonHom} shows that Hermite polynomials admit an expansion into monomials $t_s(X)$. In particular, since Hermite polynomials are orthogonal in view of \eqref{HerOrtho}, it follows   that
\begin{gather*}
\mathbf{U}_k^X = \overline{\mathrm{span}_{\R}}\{ H_{\kappa}^{(\ell,n)}(X) : \kappa \vdash k \}.
\end{gather*}
The following result links matrix-variate Hermite polynomials with the classical Wiener-It\^{o}  decomposition in  
\eqref{WCVec}. In particular, we establish an explicit formula for projection coefficients associated with radial functionals of the form $F(X)=f_0(XX^T)\in L^2(\mu_X)$ where $X\sim \mathscr{N}_{\ell \times n}(0,\Id_{\ell}\otimes \Id_n)$ in terms of generalized Laguerre polynomials (see Section \ref{SecZP} for definitions). Such a formula is to be compared to \cite{Thangavelu,Kochneff}, where the authors study Hermite expansions of functions of the form $F(x)=f_0(\norm{X})P(x)$ on $\R^n$, where $P$ is a harmonic polynomial.

\begin{Thm}\label{ThmWC}
For integers $1\leq \ell \leq n$, let $X \sim \mathscr{N}_{\ell \times n}(0, \Id_{\ell} \otimes \Id_n)$ and write $\mathbf{X}=\mathrm{Vec}(X)$. 
Then, for every integer $k\geq0$ and every partition $\kappa\vdash k$, we have that $H_{\kappa}^{(\ell,n)}(X)$ is an element of $\mathbf{C}_{2k}^{\mathbf{X}}$ and for every $F \in L^2(\mu_X)$,
\begin{gather}\label{proj2k}
\proj(F|\mathbf{C}_{2k}^{\mathbf{X}}) = \proj(F|\mathbf{U}_k^X) = 
\sum_{\kappa\vdash k} \widehat{F}(\kappa) H_{\kappa}^{(\ell,n)}(X), 
\end{gather}
where $\widehat{F}(\kappa)$ is as in \eqref{Coeff}. In particular, we have that  $\proj(F|\mathbf{C}_{2k+1}^{\mathbf{X}})=0$. Moreover, if $F(X)=f_0(XX^T)$, then \begin{eqnarray}\label{RAD}
\widehat{F}(\kappa) = \frac{1}{2^{n\ell/2}\Gamma_{\ell}(\frac{n}{2})}\frac{(-2)^{k}}{k!C_{\kappa}(\Id_{\ell})}\times\int_{\mathcal{P}_{\ell}(\R)} f_0(R) L_{\kappa}^{(\frac{n-\ell-1}{2})}(2^{-1}R) \etr{-2^{-1}R} \det(R)^{\frac{n-\ell-1}{2}}\nu(dR), 
\end{eqnarray}
where $L_{\kappa}^{(\gamma)}$ denotes the generalized Laguerre polynomial of order $\gamma>-1$ associated with the partition $\kappa$, defined in \eqref{LagZon} and $\nu(dR)$ is the Lebesgue measure on $\mathcal{P}_{\ell}(\R)$.
\end{Thm}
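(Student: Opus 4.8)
The plan is to prove the three assertions of Theorem \ref{ThmWC} in order: first that $H_\kappa^{(\ell,n)}(X)\in\mathbf{C}_{2k}^{\mathbf{X}}$, then the projection identity \eqref{proj2k} together with the vanishing of odd projections, and finally the Laguerre formula \eqref{RAD} for radial functionals. For the first point, I would start from the zonal expansion \eqref{HerZon}: each $H_\kappa^{(\ell,n)}$ is a linear combination of zonal polynomials $C_\sigma(XX^T)$ with $\sigma\vdash s$, $s\le k$, and by \eqref{HOM} each such $C_\sigma(XX^T)$ is a homogeneous polynomial of degree $2s$ in the entries of $X=\mathrm{Vec}(\mathbf X)$. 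A homogeneous polynomial of degree $2s$ in a Gaussian vector lies in $\bigoplus_{j=0}^{s}\mathbf{C}_{2j}^{\mathbf X}$ (only even chaoses appear, by parity of Hermite polynomials). Hence $H_\kappa^{(\ell,n)}(X)\in\bigoplus_{j=0}^{k}\mathbf{C}_{2j}^{\mathbf X}$, and I must upgrade this to membership in the single chaos $\mathbf{C}_{2k}^{\mathbf X}$. The mechanism is orthogonality: by \eqref{HerOrtho} the $H_\kappa^{(\ell,n)}$ with $\kappa\vdash k$ are orthogonal to all $H_\sigma^{(\ell,n)}$ with $\sigma\vdash s<k$; since $\mathcal U_{k-1}^X=\bigoplus_{j<k}\mathbf U_j^X$ is spanned by the latter and also equals $\mathcal U_{k-1}^X\subset\bigoplus_{j<k}\mathbf{C}_{2j}^{\mathbf X}$, an induction on $k$ shows the degree-$2k$ part of $H_\kappa^{(\ell,n)}$ cannot have any nonzero component in $\mathbf{C}_{2j}^{\mathbf X}$ for $j<k$, forcing $H_\kappa^{(\ell,n)}(X)\in\mathbf{C}_{2k}^{\mathbf X}$. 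This simultaneously establishes $\mathbf U_k^X=\mathcal U_k^X\ominus\mathcal U_{k-1}^X\subset\mathbf C_{2k}^{\mathbf X}$.

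For \eqref{proj2k}, the key observation is that $L^2(\mu_X)=\overline{\bigoplus_{k\ge0}\mathbf U_k^X}$ by Proposition \ref{Prop:Complete} (the $\mathbf U_k^X$ are exactly the orthogonal increments of the filtration $\mathcal U_k^X$, and $\mathbf U_k^X=\overline{\mathrm{span}}\{H_\kappa^{(\ell,n)}:\kappa\vdash k\}$ as noted in the excerpt). Combined with the first step, for $F\in L^2(\mu_X)$ the expansion \eqref{Decomp} is a decomposition into mutually orthogonal pieces $\sum_{\kappa\vdash k}\widehat F(\kappa)H_\kappa^{(\ell,n)}(X)\in\mathbf U_k^X\subset\mathbf C_{2k}^{\mathbf X}$, and since $\mathbf C_{2k}^{\mathbf X}\perp\mathbf C_{2j}^{\mathbf X}$ for $j\ne k$, uniqueness of the chaos decomposition \eqref{WCVec} identifies $\proj(F|\mathbf C_{2k}^{\mathbf X})$ with this piece; that it also equals $\proj(F|\mathbf U_k^X)$ is immediate since $\mathbf U_k^X\subset\mathbf C_{2k}^{\mathbf X}$ and the $\mathbf U$'s are orthogonal. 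The vanishing $\proj(F|\mathbf C_{2k+1}^{\mathbf X})=0$ follows because $F\in L^2(\mu_X)=\overline{\bigoplus_k\mathbf U_k^X}\subset\overline{\bigoplus_k\mathbf C_{2k}^{\mathbf X}}$, which carries no odd-chaos component.

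For the radial case \eqref{RAD}, I would compute $\widehat F(\kappa)=c(\kappa)^{-1}\int_{\R^{\ell\times n}}f_0(XX^T)H_\kappa^{(\ell,n)}(X)\phi^{(\ell,n)}(X)(dX)$ directly. First substitute the Hermite–Laguerre relation \eqref{RelHerLag} written for the argument $X$ rather than $\sqrt2 X$, i.e. $H_\kappa^{(\ell,n)}(X)=\gamma_\kappa L_\kappa^{((n-\ell-1)/2)}(\tfrac12 XX^T)$ with $\gamma_\kappa=(-2)^{-k}(n/2)_\kappa^{-1}$, so the integrand becomes a function of $XX^T$ alone times the Gaussian density $(2\pi)^{-n\ell/2}\etr{-\tfrac12 XX^T}$. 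Then push forward the flat measure $(dX)$ on $\R^{\ell\times n}$ under $X\mapsto R=XX^T$: the classical Wishart/polar change of variables (using the polar decomposition of Section \ref{SecPD}, or equivalently \cite[formula for the Wishart Jacobian]{BFZP,Ch2012}) gives that the image measure is $c_{\ell,n}\det(R)^{(n-\ell-1)/2}\nu(dR)$ on $\mathcal P_\ell(\R)$ for an explicit constant $c_{\ell,n}$ (coming from integrating out the Stiefel-manifold orientation, whose total mass is $v(n,\ell)=2^\ell\pi^{n\ell/2}/\Gamma_\ell(n/2)$). Collecting the constants $c(\kappa)^{-1}$, $\gamma_\kappa$, $(2\pi)^{-n\ell/2}$ and $c_{\ell,n}$ and simplifying — the $\pi$ powers cancel, $C_\kappa(\Id_\ell)$ and $(n/2)_\kappa$ combine — yields exactly the prefactor $\frac{1}{2^{n\ell/2}\Gamma_\ell(n/2)}\cdot\frac{(-2)^k}{k!C_\kappa(\Id_\ell)}$ and the stated integral over $\mathcal P_\ell(\R)$. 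The main obstacle I anticipate is the first step: carefully justifying that $H_\kappa^{(\ell,n)}(X)$ lies in the \emph{single} chaos $\mathbf C_{2k}^{\mathbf X}$ rather than a sum of lower even chaoses — this requires the inductive orthogonality argument tying together \eqref{HerOrtho}, the filtration $\{\mathcal U_k^X\}$, and Proposition \ref{Prop:Complete}, and is the conceptual heart of the theorem; by contrast the constant-chasing in \eqref{RAD}, while tedious, is routine given the Wishart Jacobian and the integral identity \eqref{intzonal}.
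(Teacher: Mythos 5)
Your route for the key claim $H_{\kappa}^{(\ell,n)}(X)\in\mathbf{C}_{2k}^{\mathbf{X}}$ has a genuine gap at exactly the step you call the conceptual heart. From \eqref{HerZon} and \eqref{HOM} you correctly get $H_{\kappa}^{(\ell,n)}(X)\in\bigoplus_{j\le k}\mathbf{C}_{2j}^{\mathbf{X}}$, and \eqref{HerOrtho} gives orthogonality of $H_{\kappa}^{(\ell,n)}(X)$ to $\mathrm{span}\{H_{\sigma}^{(\ell,n)}(X):\sigma\vdash s<k\}$, i.e.\ to (the polynomial part of) $\mathcal{U}_{k-1}^X$. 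But orthogonality to a \emph{subspace} of $\bigoplus_{j<k}\mathbf{C}_{2j}^{\mathbf{X}}$ does not force the lower-chaos components to vanish: writing $H_{\kappa}^{(\ell,n)}(X)=\sum_{j\le k}P_{2j}$ with $P_{2j}\in\mathbf{C}_{2j}^{\mathbf{X}}$, each $P_{2j}$ is a polynomial in the entries of $X$, but nothing in your argument shows it is a function of the spectral measure, i.e.\ an element of $\mathcal{U}_{j}^X$; without that, the identity $\langle H_{\kappa}^{(\ell,n)}(X),P_{2j}\rangle=\E{P_{2j}^2}$ cannot be combined with orthogonality to $\mathcal{U}_{k-1}^X$, and the induction does not close. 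The gap is repairable: each chaos $\mathbf{C}_{2j}^{\mathbf{X}}$ is invariant under the measure-preserving maps $X\mapsto QXH$, $Q\in O(\ell)$, $H\in O(n)$, so chaotic projections commute with averaging over this action; since $H_{\kappa}^{(\ell,n)}(X)$ is bi-invariant, so is each $P_{2j}$, and a bi-invariant polynomial of degree at most $2j$ is a polynomial in the $t_s(X)$ of total weight at most $j$, hence lies in $\mathcal{U}_{j}^X\subset\mathcal{U}_{k-1}^X$ for $j<k$ — only after this extra ingredient does your orthogonality argument kill $P_{2j}$.

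The paper sidesteps the issue entirely by a direct computation: it inserts the monomial expansion \eqref{HOM} of $C_{\kappa}$ into the matrix Rodrigues formula \eqref{Rod} and applies the univariate Rodrigues formula \eqref{UniRod} entrywise, obtaining the explicit identity $H_{\kappa}^{(\ell,n)}(X)=4^{-k}\left(\frac{n}{2}\right)_{\kappa}^{-1}\sum_{|\alpha|=2k}z_{\alpha}^{\kappa}\,H_{\alpha}^{\otimes \ell n}(\mathrm{Vec}(X))$, in which every multi-index has cumulative degree exactly $2k$; membership in $\mathbf{C}_{2k}^{\mathbf{X}}$, formula \eqref{proj2k} and the vanishing of the odd projections are then immediate. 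You may either adopt this computation or add the invariance argument above. Your treatment of \eqref{RAD} coincides with the paper's proof: rescale \eqref{RelHerLag} to $H_{\kappa}^{(\ell,n)}(X)=\gamma_{\kappa}L_{\kappa}^{(\frac{n-\ell-1}{2})}(2^{-1}XX^T)$, pass to polar coordinates $X=R^{1/2}U$ with $(dX)=\frac{\pi^{n\ell/2}}{\Gamma_{\ell}(n/2)}\det(R)^{\frac{n-\ell-1}{2}}\nu(dR)\tilde{\mu}(dU)$, integrate out the Stiefel factor against the probability measure $\tilde{\mu}$ and collect constants; note only that \eqref{intzonal} is not actually needed for \eqref{RAD} (it enters later, in the proof of Theorem \ref{CoeffSigma}).
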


Our proof of Theorem \ref{ThmWC} suggests that, combining the generalized Rodrigues formula \eqref{Rod} with the univariate Rodrigues formula \eqref{UniRod}, matrix-variate Hermite polynomials can be expressed in terms of multivariate Hermite polynomials. For instance, combining \eqref{Rod} with \eqref{UniRod} in the case $\ell=n=1$ (so that for every integer $k\geq0$, 
$\kappa=(k)$ is the only partition of $k$) and writing $\phi=\phi^{(1,1)}$ for the standard Gaussian density function
yields for every $k\geq 0$
\begin{gather*}
H_{(k)}^{(1,1)}(X) \phi(X)
= 4^{-k} \left(\frac{n}{2}\right)_{(k)}^{-1} C_{(k)}([\partial X]^2)\phi(X)\\
= 4^{-k} \left(\frac{n}{2}\right)_{k}^{-1}
 \left(\frac{\partial}{\partial X}\right)^{2k} \phi(X) 
 =4^{-k} \left(\frac{n}{2}\right)_{k}^{-1}
H_{2k}(X)\phi(X),
\end{gather*}
where we used  that $\left(\frac{n}{2}\right)_{(k)}=\left(\frac{n}{2}\right)_{k}, C_{(k)}(a) = a^k$ for $a\in \R$ and the Rodrigues formula for classical Hermite polynomials in \eqref{UniRod}. This shows in   particular that 
\begin{gather*}
H_{(k)}^{(1,1)}(X) = 4^{-k} \left(\frac{n}{2}\right)_{k}^{-1}
H_{2k}(X).
\end{gather*}
Proceeding similarly for arbitrary dimensions $\ell$ and $n$, we compute the first matrix-variate Hermite polynomials associated with partitions of order up to 2  to be 
\begin{eqnarray}
H_{(1)}^{(\ell,n)}(X) &=&  \frac{1}{2n}\sum_{i=1}^{\ell} \sum_{j=1}^{n} H_2(X_{ij}) \ , \notag \\ 
H_{(2)}^{(\ell,n)}(X) &=& \frac{1}{12n(n+2)}
\bigg( 3\sum_{i\in [\ell]}\sum_{j\in [n]} H_4(X_{ij})
+ 3\sum_{i_1\neq i_2\in [\ell]} \sum_{j\in [n]} H_2(X_{i_1j}) H_2(X_{i_2j}) \notag \\
&&+3\sum_{i\in [\ell]} \sum_{j_1\neq j_2\in [n]} H_2(X_{ij_1}) H_2(X_{ij_2})
+ \sum_{i_1\neq i_2\in [\ell]} \sum_{j_1\neq j_2\in [n]} H_2(X_{i_1j_1}) H_2(X_{i_2j_2}) \notag \\
&&+ 2\sum_{i_1\neq i_2\in [\ell]} \sum_{j_1\neq j_2\in [n]} H_1(X_{i_1j_1}) H_1(X_{i_2j_1})   
H_1(X_{i_1j_2}) H_1(X_{i_2j_2}) \bigg) \notag \ , \\
H_{(1,1)}^{(\ell,n)}(X) &=& \frac{1}{6n(n-1)}\bigg(
\sum_{i_1\neq i_2\in [\ell]} \sum_{j_1\neq j_2\in [n]}
H_2(X_{i_1j_1})H_2(X_{i_2j_2}) \notag \\
&&- \sum_{i_1\neq i_2\in [\ell]} 
\sum_{j_1\neq j_2\in[n]} H_1(X_{i_1j_1}) H_1(X_{i_2j_1})   
H_1(X_{i_1j_2}) H_1(X_{i_2j_2}) \bigg). \label{HerPol}
\end{eqnarray}

Combining the content of Theorem \ref{ThmWC} with the orthogonality relation \eqref{HerOrtho}, allows one to derive variance expansions of spectral variables $F(X)\in L^2(\mu_X)$ where $X \sim \mathscr{N}_{\ell \times n}(0,\Id_{\ell}\otimes \Id_n)$ as a converging series in terms of its Fourier-Hermite coefficients.
\begin{Prop}\label{VarEx}
For integers $1\leq \ell \leq n$, let $X \sim \mathscr{N}_{\ell \times n}(0,\Id_{\ell}\otimes \Id_n)$ and $F(X) \in L^2(\mu_X)$. Then, 
\begin{gather*}
\V{F(X)}=\sum_{k\geq 1}\sum_{\kappa \vdash k} 
\frac{4^{k}(\frac{n}{2})_{\kappa}}{k!C_{\kappa}(\Id_{\ell}) } 
\E{F(X) H_{\kappa}^{(\ell,n)}(X)}^2,
\end{gather*}
where the convergence of the series is part of the conclusion.
\end{Prop}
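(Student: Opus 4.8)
The plan is to derive the variance expansion as a direct consequence of the Wiener--It\^o chaos decomposition established in Theorem \ref{ThmWC} together with the orthogonality relation \eqref{HerOrtho}. The first step is to observe that since $F(X)\in L^2(\mu_X)$, Proposition \ref{Prop:Complete} guarantees that $F$ admits the $L^2(\Prob)$-convergent expansion \eqref{Decomp}, namely $F=\sum_{k\ge 0}\sum_{\kappa\vdash k}\widehat{F}(\kappa)H_\kappa^{(\ell,n)}(X)$, with coefficients $\widehat{F}(\kappa)$ given by \eqref{Coeff}. By Parseval's identity in the Hilbert space $L^2(\mu_X)$ with respect to the orthonormal system $\mathbb{H}_{[\ell\times n]}$ in \eqref{HerONB}, we get $\E{F(X)^2}=\sum_{k\ge 0}\sum_{\kappa\vdash k}c(\kappa)\,\widehat{F}(\kappa)^2$, where $c(\kappa)=4^{-k}(n/2)_\kappa^{-1}k!\,C_\kappa(\Id_\ell)$, and this series converges — which is exactly the convergence asserted in the statement.

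The second step is to identify the $k=0$ term and subtract it off to pass from the second moment to the variance. The only partition of $k=0$ is the empty partition, $H_\emptyset^{(\ell,n)}\equiv 1$, and $c(\emptyset)=1$, so $\widehat{F}(\emptyset)=\E{F(X)}$ and the $k=0$ contribution to $\E{F(X)^2}$ is precisely $\E{F(X)}^2$. Hence $\V{F(X)}=\E{F(X)^2}-\E{F(X)}^2=\sum_{k\ge1}\sum_{\kappa\vdash k}c(\kappa)\,\widehat{F}(\kappa)^2$. It remains only to rewrite $c(\kappa)\widehat{F}(\kappa)^2$ in the form stated: from \eqref{Coeff} we have $c(\kappa)\widehat{F}(\kappa)=\E{F(X)H_\kappa^{(\ell,n)}(X)}$, so $c(\kappa)\widehat{F}(\kappa)^2=c(\kappa)^{-1}\E{F(X)H_\kappa^{(\ell,n)}(X)}^2$. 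Substituting $c(\kappa)^{-1}=4^{k}(n/2)_\kappa\big/(k!\,C_\kappa(\Id_\ell))$ yields exactly
\begin{gather*}
\V{F(X)}=\sum_{k\ge1}\sum_{\kappa\vdash k}\frac{4^{k}(\frac{n}{2})_\kappa}{k!\,C_\kappa(\Id_\ell)}\,\E{F(X)H_\kappa^{(\ell,n)}(X)}^2.
\end{gather*}

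There is essentially no serious obstacle here: the result is a formal consequence of completeness (Proposition \ref{Prop:Complete}) and orthogonality (relation \eqref{HerOrtho}), which have already been established. The one point requiring a line of care is the justification that $F\in L^2(\mu_X)$ — rather than merely $L^2(\phi^{(\ell,n)})$ — so that the expansion \eqref{Decomp} is available; this is part of the hypothesis. A secondary bookkeeping point is to make sure the normalization constant $c(\kappa)$ is correctly inverted and that the generalized Pochhammer symbol $(n/2)_\kappa$ appears in the numerator (not the denominator) after inversion, which one checks directly against \eqref{HerONB}. Aside from this, the proof is a two-line application of Parseval's theorem.
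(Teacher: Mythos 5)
Your proposal is correct and follows essentially the same route as the paper: expand $F$ in the orthonormal system $\mathbb{H}_{[\ell\times n]}$, apply orthogonality (Parseval), identify the $k=0$ term with $\E{F(X)}^2$, and invert the normalization $c(\kappa)$ to express the coefficients as $\E{F(X)H_{\kappa}^{(\ell,n)}(X)}$. No gaps.
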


\subsection{Fourier-Hermite coefficients of Gaussian determinants as intrinsic volumes of ellipsoids}\label{SectGeom}
In this section, we consider rectangular Gaussian matrices $X$ and provide the Wiener chaos expansion of determinants of the form $\det(XX^T)^{1/2}$. In \cite{Kab14}, Kabluchko and Zaporozhets consider the case where $X\in \R^{\ell\times n}$ has centred independent rows with respective covariance matrices $\Sigma_1,\ldots,\Sigma_{\ell}$, 
and prove that (see in particular  \cite[Theorem 1.1]{Kab14})
\begin{gather}\label{Kab11}
\E{\det(XX^T)^{1/2}} = \frac{(n)_{\ell}}{(2\pi)^{\ell/2}\kappa_{n-\ell}}V(\mathcal{E}_{\Sigma_1},\ldots,\mathcal{E}_{\Sigma_{\ell}},\mathbb{B}_n,\ldots,\mathbb{B}_n),
\end{gather}
where $V(\mathcal{E}_{\Sigma_1},\ldots,\mathcal{E}_{\Sigma_{\ell}},\mathbb{B}_n,\ldots,\mathbb{B}_n)$ denotes the mixed volume of the ellipsoids $\mathcal{E}_{\Sigma_i}, i=1,\ldots,\ell$ associated with matrices $\Sigma_i$ and $\mathbb{B}_n$ denotes the unit ball in $\R^n$ with volume $\kappa_n= \pi^{n/2}/\Gamma(1+n/2)$. We also refer the reader to \cite[Theorem 3.2]{Vit:91}, where the author proves a similar formula linking the expected absolute determinant of a  matrix with i.i.d. copies of a random vector to the volume of the zonoid associated with the random distribution.

\medskip
In Theorem \ref{InterVol} below, we substantially extend the framework of Kabluchko and Zaporozhets to arbitrary projection coefficients associated with the Wiener chaos expansion of such Gaussian determinants in the case where the rows of $X$ are i.i.d~centred Gaussian vectors with the same covariance matrix $\Sigma$. 

\medskip
Let $\Sigma \in \R^{n \times n}$ be a symmetric positive-definite matrix and  $\{X^{(i)}=(X_1^{(i)},\ldots,X_n^{(i)}):i \in [\ell]\}$ a collection of independent Gaussian vectors with covariance matrix $\Sigma$. We write $X \in \R^{\ell \times n}$ for the matrix whose $i$-th row is $X^{(i)}$. It follows that $X$ has distribution $ \mathcal{N}_{\ell\times n}(0,\Id_{\ell} \otimes \Sigma)$ with density function 
\begin{gather}\label{denSigma}
\phi^{(\ell,n)}_{\Sigma}(X)
= (2\pi)^{-n\ell/2} \det(\Sigma)^{-\ell/2} \etr{- 2^{-1}X\Sigma^{-1}X^T}.
\end{gather} 
As a consequence, the matrix
$X\Sigma^{-1/2}$ has the $\mathcal{N}_{\ell\times n}(0,\Id_{\ell} \otimes \Id_n)$ distribution (see e.g. \cite[Theorem 2.3.10]{Gupta2018}).
Based on the matrix-variate Hermite polynomials $H_{\kappa}^{(\ell,n)}$ and their orthogonality relation with respect to $\phi^{(\ell,n)}$ in \eqref{HerOrtho}, we define 
\begin{gather}\label{HerSigma}
H^{(\ell,n)}_{\kappa}(X; \Sigma) := \det(\Sigma)^{\ell k} H_{\kappa}^{(\ell,n)}(X\Sigma^{-1/2}). 
\end{gather}
In particular, we note that $H_{\kappa}^{(\ell,n)}(\cdot, \Id_n)=H_{\kappa}^{(\ell,n)}(\cdot)$. The following proposition shows that $H^{(\ell,n)}_{\kappa}(\cdot, \Sigma)$ are orthogonal with respect to the density $\phi^{(\ell,n)}_{\Sigma}$ in \eqref{denSigma}.
\begin{Prop}\label{DefSigma}
For every integers $k,l\geq0$ and every partitions $\kappa \vdash k, \sigma \vdash l$, we have
\begin{gather*}
\int_{\R^{\ell \times n}} H^{(\ell,n)}_{\kappa}(X;\Sigma)
H^{(\ell,n)}_{\sigma}(X;\Sigma) \phi^{(\ell,n)}_{\Sigma}(X)(dX)  
= \ind{\kappa=\sigma} \times \det(\Sigma)^{2\ell k}4^{-k}\bigg(\frac{n}{2}\bigg)_{\kappa}^{-1} k!C_{\kappa}(\Id_{\ell}).
\end{gather*}
\end{Prop}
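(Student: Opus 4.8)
The plan is to deduce the statement from the orthogonality relation \eqref{HerOrtho} for the standard matrix-variate Hermite polynomials by means of the linear substitution $Y=X\Sigma^{-1/2}$, under which a matrix with density $\phi^{(\ell,n)}_{\Sigma}$ becomes a standard normal matrix; this is the matrix analog of the elementary fact that one reduces Hermite orthogonality with respect to a nondegenerate Gaussian measure to the standard case by whitening.

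First I would substitute the definition \eqref{HerSigma} into the left-hand side and pull out the scalar factor $\det(\Sigma)^{\ell k}\det(\Sigma)^{\ell l}=\det(\Sigma)^{\ell(k+l)}$, so that it remains to evaluate
\[
\int_{\R^{\ell\times n}} H_{\kappa}^{(\ell,n)}(X\Sigma^{-1/2})\,H_{\sigma}^{(\ell,n)}(X\Sigma^{-1/2})\,\phi^{(\ell,n)}_{\Sigma}(X)\,(dX).
\]
Next I would perform the change of variables $X=Y\Sigma^{1/2}$, i.e.\ $Y=X\Sigma^{-1/2}$, which requires two bookkeeping facts. On the one hand, the map $Y\mapsto Y\Sigma^{1/2}$ multiplies each of the $\ell$ rows of $Y$ on the right by $\Sigma^{1/2}$, hence its Jacobian determinant equals $\det(\Sigma^{1/2})^{\ell}=\det(\Sigma)^{\ell/2}$ and $(dX)=\det(\Sigma)^{\ell/2}(dY)$. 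On the other hand, $X\Sigma^{-1}X^{T}=Y\Sigma^{1/2}\Sigma^{-1}\Sigma^{1/2}Y^{T}=YY^{T}$, so comparing \eqref{denSigma} with the definition of $\phi^{(\ell,n)}$ gives $\phi^{(\ell,n)}_{\Sigma}(Y\Sigma^{1/2})=\det(\Sigma)^{-\ell/2}\phi^{(\ell,n)}(Y)$. These two powers of $\det(\Sigma)$ cancel, so that $\phi^{(\ell,n)}_{\Sigma}(X)(dX)=\phi^{(\ell,n)}(Y)(dY)$ and the integral above becomes $\int_{\R^{\ell\times n}}H_{\kappa}^{(\ell,n)}(Y)\,H_{\sigma}^{(\ell,n)}(Y)\,\phi^{(\ell,n)}(Y)(dY)$.

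Finally I would invoke the orthogonality relation \eqref{HerOrtho} to identify this last integral with $\ind{\kappa=\sigma}\,4^{-k}(\tfrac{n}{2})_{\kappa}^{-1}k!\,C_{\kappa}(\Id_{\ell})$, and then multiply back by the prefactor $\det(\Sigma)^{\ell(k+l)}$; on the support of the indicator one has $k=l$, so $\det(\Sigma)^{\ell(k+l)}=\det(\Sigma)^{2\ell k}$, which gives precisely the claimed formula. The argument is entirely routine: the only mildly delicate point is getting the Jacobian of the matrix substitution right and keeping the powers of $\det(\Sigma)$ consistent, so there is no genuine obstacle here.
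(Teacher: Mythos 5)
Your proposal is correct and follows essentially the same route as the paper's proof: both use the relation $\phi^{(\ell,n)}_{\Sigma}(X)=\det(\Sigma)^{-\ell/2}\phi^{(\ell,n)}(X\Sigma^{-1/2})$, the change of variables $Y=X\Sigma^{-1/2}$ with Jacobian factor $\det(\Sigma)^{\ell/2}$, and then the standard orthogonality relation \eqref{HerOrtho}. The bookkeeping of the powers of $\det(\Sigma)$, including the reduction $\det(\Sigma)^{\ell(k+l)}=\det(\Sigma)^{2\ell k}$ on the support of the indicator, matches the paper's computation.
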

Therefore, the family of normalized polynomials  
\begin{gather}\label{cSigma}
\mathbb{H}_{\Sigma}:= \left\{ c(\kappa; \Sigma)^{-1/2} H_{\kappa}^{(\ell,n)}(\cdot;\Sigma): \kappa \vdash k \geq0\right\}, \quad 
c(\kappa;\Sigma):=\det(\Sigma)^{2\ell k}4^{-k}\bigg(\frac{n}{2}\bigg)_{\kappa}^{-1} k!C_{\kappa}(\Id_{\ell})
\end{gather}
forms a complete orthonormal system of $L^2(\mu_X)$, where $\mu_X$ denotes the spectral measure of $XX^T$ associated with $X$. Hence, for every 
$F \in L^2(\mu_X)$, one has the expansion 
\begin{gather*}
F(X) = \sum_{k\geq 0} \sum_{\kappa\vdash k} \widehat{F}(\kappa; \Sigma) H_{\kappa}^{(\ell,n)}(X; \Sigma),
\end{gather*}
where the projection coefficients are given by
\begin{eqnarray}\label{Fourier}
\widehat{F}(\kappa; \Sigma) 
&=& c(\kappa;\Sigma)^{-1}
\int_{\R^{\ell\times n}} F(X) H^{(\ell,n)}_{\kappa}(X;\Sigma) \phi_{\Sigma}^{(\ell,n)}(X) (dX) \notag \\
&=& c(\kappa;\Sigma)^{-1}
\EX{X}{F(X) H^{(\ell,n)}_{\kappa}(X;\Sigma)}, \quad X \sim \mathscr{N}_{\ell \times n}(0, \Id_{\ell}\otimes \Sigma).
\end{eqnarray}
The next result provides an explicit formula for the projection coefficients $\widehat{F}(\kappa; \Sigma)$ in the special case where $F(X) = \det(XX^T)^{1/2}$. 
\begin{Thm}\label{CoeffSigma}
For integers $1\leq \ell \leq n$ and $\Sigma \in \R^{n\times n}$ positive-definite symmetric, let $X \sim \mathscr{N}_{\ell \times n}(0,\Id_{\ell}\otimes \Sigma)$. Then, $F(X)=\det(XX^T)^{1/2}$ is an element of $L^2(\mu_X)$, and one has the decomposition 
\begin{gather*}
F(X)= \sum_{k\geq0} \sum_{\kappa\vdash k} \widehat{F}(\kappa;\Sigma) H_{\kappa}^{(\ell,n)}(X;\Sigma), 
\end{gather*}
where the Fourier-Hermite coefficients of $F$ are given by the formula
\begin{gather}
\widehat{F}(\kappa; \Sigma) 
= \frac{ (-2)^{k}  }{\det(\Sigma)^{\ell k} k!}\bigg(\frac{n}{2}\bigg)_{\kappa} 
\sum_{s=0}^{k} \sum_{\sigma\vdash s}
{\kappa \choose \sigma} (-1)^s\frac{(\frac{n+1}{2})_{\sigma}}{(\frac{n}{2})_{\sigma}}  \notag \\ 
\times
\det(\Sigma)^{-\ell/2}   2^{\ell/2}  \frac{\Gamma_{\ell}(\frac{n+1}{2})}{\Gamma_{\ell}(\frac{n}{2})}  \int_{O(n,\ell)}   \det(U\Sigma^{-1}U^T)^{-(n+1)/2} \tilde{\mu}(dU). \label{For1}
\end{gather}
\end{Thm}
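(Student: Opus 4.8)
The plan is to express $\widehat{F}(\kappa;\Sigma)$, via the formula \eqref{Fourier}, as an integral over $\R^{\ell\times n}$, to convert it by the polar factorisation \eqref{Pol} into an integral over $\mathcal{P}_{\ell}(\R)\times O(n,\ell)$, and then to disentangle the $\Sigma$-dependence so that the matrix integral factorises into a $\Sigma$-free Laguerre integral over $\mathcal{P}_{\ell}(\R)$ times the orbit integral $\int_{O(n,\ell)}\det(U\Sigma^{-1}U^T)^{-(n+1)/2}\mu(dU)$ over the Stiefel manifold.

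First, $F(X)=\det(XX^T)^{1/2}$ is the $\ell$-dimensional volume of the parallelepiped spanned by the rows $X^{(1)},\dots,X^{(\ell)}$ of $X$, whence $0\le F(X)\le\prod_{i=1}^{\ell}\norm{X^{(i)}}$ by Hadamard's inequality; since the $\norm{X^{(i)}}$ are independent with Gaussian moments of all orders, $F\in L^2(\phi^{(\ell,n)}_{\Sigma})$, and being a symmetric function of the eigenvalues of $XX^T$ it lies in $L^2(\mu_X)$, so the asserted expansion holds with $\widehat{F}(\kappa;\Sigma)$ given by \eqref{Fourier}. Combining the definition \eqref{HerSigma} with the Hermite--Laguerre relation \eqref{RelHerLag} (applied to $Z=\tfrac1{\sqrt2}X\Sigma^{-1/2}$, so that $ZZ^T=\tfrac12 X\Sigma^{-1}X^T$) gives
\begin{equation*}
H^{(\ell,n)}_{\kappa}(X;\Sigma)=\det(\Sigma)^{\ell k}\,\gamma_{\kappa}\,L^{(\frac{n-\ell-1}{2})}_{\kappa}\!\Big(\tfrac12\,X\Sigma^{-1}X^T\Big),\qquad\gamma_{\kappa}=(-2)^{-k}\Big(\tfrac n2\Big)_{\kappa}^{-1}.
\end{equation*}
Inserting this together with the density \eqref{denSigma}, $\widehat{F}(\kappa;\Sigma)$ becomes $c(\kappa;\Sigma)^{-1}\det(\Sigma)^{\ell k}\gamma_{\kappa}(2\pi)^{-n\ell/2}\det(\Sigma)^{-\ell/2}$ times
\begin{equation*}
\int_{\R^{\ell\times n}}\det(XX^T)^{1/2}\,L^{(\frac{n-\ell-1}{2})}_{\kappa}\!\Big(\tfrac12 X\Sigma^{-1}X^T\Big)\,\etr{-\tfrac12 X\Sigma^{-1}X^T}\,(dX).
\end{equation*}

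Second, apply the polar decomposition \eqref{Pol}, $X=R^{1/2}U$ with $R=XX^T\in\mathcal{P}_{\ell}(\R)$, $U\in O(n,\ell)$, and Jacobian $(dX)=2^{-\ell}\det(R)^{(n-\ell-1)/2}\,\nu(dR)\,\mu(dU)$ (the constant being fixed by $\int\phi^{(\ell,n)}(X)(dX)=1$ via the matrix Gamma integral, cf.\ \cite{Ch2012,Muir}). Put $A_U:=U\Sigma^{-1}U^T\in\mathcal{P}_{\ell}(\R)$; then $X\Sigma^{-1}X^T=R^{1/2}A_U R^{1/2}$, and since zonal polynomials---hence every $L^{(\gamma)}_{\kappa}$---depend only on the eigenvalues, the invariance \eqref{Inv} lets us replace $\tfrac12 X\Sigma^{-1}X^T$ by $\tfrac12 A_U R$ inside $L^{(\gamma)}_{\kappa}$, while $\Tr(X\Sigma^{-1}X^T)=\Tr(A_U R)$. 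For fixed $U$, the substitution $R=A_U^{-1/2}S A_U^{-1/2}$ on $\mathcal{P}_{\ell}(\R)$ has Jacobian $\nu(dR)=\det(A_U)^{-(\ell+1)/2}\nu(dS)$, turns $\tfrac12 A_U R$ into a matrix with the eigenvalues of $\tfrac12 S$, sends $\Tr(A_U R)\mapsto\Tr(S)$, and yields $\det(R)^{1/2}\det(R)^{(n-\ell-1)/2}=\det(A_U)^{-(n-\ell)/2}\det(S)^{(n-\ell)/2}$; collecting the powers of $\det(A_U)$ produces exactly $\det(A_U)^{-(n+1)/2}=\det(U\Sigma^{-1}U^T)^{-(n+1)/2}$. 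Hence the displayed integral equals $2^{-\ell}\big(\int_{O(n,\ell)}\det(U\Sigma^{-1}U^T)^{-(n+1)/2}\mu(dU)\big)\cdot I_{\kappa}$, where
\begin{equation*}
I_{\kappa}:=\int_{\mathcal{P}_{\ell}(\R)}\det(S)^{(n-\ell)/2}\,L^{(\frac{n-\ell-1}{2})}_{\kappa}\!\Big(\tfrac12 S\Big)\,\etr{-\tfrac12 S}\,\nu(dS)
\end{equation*}
is independent of $\Sigma$.

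Third, evaluate $I_{\kappa}$ by expanding $L^{(\frac{n-\ell-1}{2})}_{\kappa}(\tfrac12 S)$ via \eqref{LagZon} (where $\tfrac{n-\ell-1}{2}+\tfrac{\ell+1}{2}=\tfrac n2$), using the homogeneity $C_{\sigma}(\tfrac12 S)=2^{-s}C_{\sigma}(S)$, and applying \eqref{intzonal} to each summand with $A=\tfrac12\Id_{\ell}$, $B=\Id_{\ell}$, $t=\tfrac{n+1}{2}$ (so $\det(A)^{-t}=2^{\ell(n+1)/2}$ and $C_{\sigma}(BA^{-1})=2^{s}C_{\sigma}(\Id_{\ell})$); the factors $2^{-s}$, $2^{s}$, $C_{\sigma}(\Id_{\ell})$ cancel and one obtains
\begin{equation*}
I_{\kappa}=\Big(\tfrac n2\Big)_{\kappa}C_{\kappa}(\Id_{\ell})\,\Gamma_{\ell}\!\Big(\tfrac{n+1}{2}\Big)2^{\ell(n+1)/2}\sum_{s=0}^{k}\sum_{\sigma\vdash s}{\kappa\choose\sigma}(-1)^{s}\frac{(\frac{n+1}{2})_{\sigma}}{(\frac n2)_{\sigma}}.
\end{equation*}
Finally, substituting $\gamma_{\kappa}$ and $c(\kappa;\Sigma)=\det(\Sigma)^{2\ell k}4^{-k}(\tfrac n2)_{\kappa}^{-1}k!\,C_{\kappa}(\Id_{\ell})$ from \eqref{cSigma}, and converting $\mu$ into $\tilde{\mu}=\mu/v(n,\ell)$ with $v(n,\ell)=2^{\ell}\pi^{n\ell/2}/\Gamma_{\ell}(n/2)$: the two copies of $C_{\kappa}(\Id_{\ell})$ cancel, two of the three $(\tfrac n2)_{\kappa}$ cancel, $4^{-k}$ against $(-2)^{-k}$ yields $(-2)^{k}$, and the surviving powers of $2$ and $\pi$ collapse so that the whole expression matches \eqref{For1}.

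The main obstacle is the bookkeeping: carrying the powers of $2$, of $\det(\Sigma)$, the Pochhammer symbols $(\tfrac n2)_{\kappa},(\tfrac n2)_{\sigma},(\tfrac{n+1}{2})_{\sigma}$, the constants $C_{\kappa}(\Id_{\ell})$ and $\Gamma_{\ell}(\cdot)$ and the normalising volume $v(n,\ell)$ through two changes of variables, one zonal expansion and the passage from $\mu$ to $\tilde{\mu}$, without sign or exponent errors; a secondary point is the careful use of \eqref{Inv} (and of the eigenvalue-only dependence of $L^{(\gamma)}_{\kappa}$) to legitimise the chain of substitutions $X\Sigma^{-1}X^T\to R^{1/2}A_U R^{1/2}\to A_U R\to S$ together with its two Jacobian factors.
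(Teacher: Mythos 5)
Your argument is correct and reaches \eqref{For1} with the same three pillars as the paper: the Hermite--Laguerre relation \eqref{RelHerLag}, the polar factorization \eqref{Pol} with its Jacobian, and the zonal Laplace-transform identity \eqref{intzonal}. The one real difference is how you extract the Stiefel factor $\det(U\Sigma^{-1}U^T)^{-(n+1)/2}$: the paper proceeds probabilistically, conditioning on $U$ and using the joint, marginal and conditional densities of the polar pair $(R,U)$ (formulas \eqref{denMU}, \eqref{denU}, \eqref{MgivenU}), then applies the Laguerre integral \eqref{LaplaceLag} with the $\Sigma$-dependent argument $A=B=2^{-1}U\Sigma^{-1}U^T$, so that $\det(U\Sigma^{-1}U^T)^{n/2}$ from the conditional density partially cancels against $\det(2^{-1}U\Sigma^{-1}U^T)^{-(n+1)/2}$ and the remaining $\det(U\Sigma^{-1}U^T)^{-1/2}$ is integrated against $f_U$; you instead perform the deterministic congruence substitution $R=A_U^{-1/2}SA_U^{-1/2}$ on $\mathcal{P}_{\ell}(\R)$, whose Jacobian $\det(A_U)^{-(\ell+1)/2}$ together with $\det(R)^{(n-\ell)/2}$ yields the exponent $-(n+1)/2$ in one stroke and leaves a $\Sigma$-free Laguerre integral evaluated via \eqref{intzonal} with $A=\tfrac12\Id_{\ell}$, $B=\Id_{\ell}$. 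This buys you independence from the distributional facts about $(R,U)$ (your route only needs the polar Jacobian), at the cost of one extra change of variables; conceptually the two are equivalent, since your substitution is exactly what reduces the general-$A$ case of \eqref{intzonal} to the identity-matrix case. Your constants check out ($c(\kappa;\Sigma)^{-1}\det(\Sigma)^{\ell k}\gamma_{\kappa}$, the factor $2^{-\ell}$ versus $\pi^{n\ell/2}/\Gamma_{\ell}(n/2)$ with $\tilde{\mu}$, and $2^{-n\ell/2+\ell(n+1)/2}=2^{\ell/2}$), and your $L^2(\mu_X)$ argument via Hadamard's inequality plus symmetry in the eigenvalues is a perfectly good, slightly more explicit substitute for the paper's spectral-measure representation of $\det(XX^T)$.
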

Here,  ${\kappa \choose \sigma}$ denote the generalized binomial coefficients defined by \eqref{Bin}, and $\tilde{\mu}$ stands for the Haar probability measure on the Stiefel manifold $O(n,\ell)$ of  $\ell$-frames in $\R^n$. 

\medskip
As anticipated, our next result yields a geometric interpretation of the projection coefficients $\widehat{F}(\kappa;\Sigma)$ appearing in \eqref{For1} in terms of mixed volumes and intrinsic volumes of ellipsoids (see Section \ref{SecGeom} for preliminaries on these notions and in particular notation \eqref{Eq:NotationVol}).

\begin{Thm}\label{InterVol}
For integers $1\leq \ell \leq n$ and $\Sigma \in \R^{n\times n}$ positive-definite symmetric, let $X \sim \mathscr{N}_{\ell \times n}(0,\Id_{\ell}\otimes \Sigma)$. Then, for $F(X)=\det(XX^T)^{1/2}$, we have
\begin{eqnarray}
\widehat{F}(\kappa;\Sigma)
&=&
 \mathcal{M}(\kappa;\Sigma, \ell,n)\cdot
 V(\mathcal{E}_{\Sigma}[\ell],\mathbb{B}_n[n-\ell]) \label{For2} \\
&=& \mathcal{M}(\kappa;\Sigma,\ell,n)\cdot
\kappa_{n-\ell}{ n \choose \ell}^{-1} 
V_{\ell}(\mathcal{E}_{\Sigma}) , \label{For3}
\end{eqnarray}
where 
\begin{eqnarray*}
\mathcal{M}(\kappa;\Sigma, \ell,n):=
\frac{ (-2)^{k}  }{\det(\Sigma)^{\ell k} k!}\bigg(\frac{n}{2}\bigg)_{\kappa} 
\sum_{s=0}^{k} \sum_{\sigma \vdash s}
{\kappa \choose \sigma} (-1)^s\frac{(\frac{n+1}{2})_{\sigma}}{(\frac{n}{2})_{\sigma}}  
\frac{(n)_{\ell} }{(2\pi)^{\ell/2} \kappa_{n-\ell}} 
\end{eqnarray*}
and 
where $V(\cdot,\ldots,\cdot)$ and $V_{\ell}(\cdot)$ stand for the mixed and $\ell$-th intrinsic volumes, respectively (see also notation \eqref{Eq:NotationVol}), $\mathbb{B}_n$ denotes the unit ball in $\R^n$ and $\kappa_n=\pi^{n/2}/\Gamma(1+n/2)$ denotes its volume.
In particular, for $\kappa=(0)$, 
\begin{gather}\label{Kab}
\E{\det(XX^T)^{1/2}} = 
\frac{(n)_{\ell}}{(2\pi)^{n\ell/2}\kappa_{n-\ell}}V(\mathcal{E}_{\Sigma}[\ell],\mathbb{B}_n[n-\ell])
= \frac{(n)_{\ell} }{(2\pi)^{\ell/2} }{ n \choose \ell}^{-1} 
V_{\ell}(\mathcal{E}_{\Sigma}).
\end{gather}
\end{Thm}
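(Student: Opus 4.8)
The plan is to read off the claimed formulas \eqref{For2}--\eqref{For3} directly from the Stiefel-manifold formula \eqref{For1} of Theorem \ref{CoeffSigma}, by pinning down the geometric meaning of the integral appearing there through comparison with the trivial partition $\kappa=(0)$ and the formula of Kabluchko--Zaporozhets \eqref{Kab11}. The key starting observation is that in \eqref{For1} the quantity
\[
S \;:=\; \det(\Sigma)^{-\ell/2}\,2^{\ell/2}\,\frac{\Gamma_{\ell}(\tfrac{n+1}{2})}{\Gamma_{\ell}(\tfrac{n}{2})}\int_{O(n,\ell)}\det(U\Sigma^{-1}U^{T})^{-(n+1)/2}\,\tilde{\mu}(dU)
\]
depends only on $\Sigma,\ell,n$ and not on the summation indices $s,\sigma$, so it factors multiplicatively out of the double sum. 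Writing $P(\kappa)$ for the remaining combinatorial prefactor,
\[
P(\kappa)=\frac{(-2)^{k}}{\det(\Sigma)^{\ell k}k!}\Big(\frac{n}{2}\Big)_{\kappa}\sum_{s=0}^{k}\sum_{\sigma\vdash s}{\kappa\choose\sigma}(-1)^{s}\frac{(\frac{n+1}{2})_{\sigma}}{(\frac{n}{2})_{\sigma}},
\]
formula \eqref{For1} reads simply $\widehat{F}(\kappa;\Sigma)=P(\kappa)\cdot S$, and the whole theorem reduces to computing $S$ in integral-geometric terms.

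To compute $S$, I would specialize to $\kappa=(0)$. Then $k=0$, the double sum collapses to its single term $s=0$, $\sigma=(0)$, which equals $1$, and $(\tfrac{n}{2})_{(0)}=1$, so $P((0))=1$ and $\widehat{F}((0);\Sigma)=S$. On the other hand $H^{(\ell,n)}_{(0)}(\,\cdot\,;\Sigma)\equiv1$ and $c((0);\Sigma)=1$ --- both immediate from \eqref{HerZon}, \eqref{HerSigma}, \eqref{cSigma} together with the value $C_{(0)}\equiv1$ --- so by \eqref{Fourier} the coefficient $\widehat{F}((0);\Sigma)$ is nothing but $\E{\det(XX^{T})^{1/2}}$. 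Hence $S=\E{\det(XX^{T})^{1/2}}$. Since the rows of $X$ are i.i.d.~$\mathscr{N}(0,\Sigma)$, the Kabluchko--Zaporozhets formula \eqref{Kab11} with $\Sigma_{1}=\dots=\Sigma_{\ell}=\Sigma$, in the notation \eqref{Eq:NotationVol}, gives
\[
S \;=\; \E{\det(XX^{T})^{1/2}} \;=\; \frac{(n)_{\ell}}{(2\pi)^{\ell/2}\kappa_{n-\ell}}\,V(\mathcal{E}_{\Sigma}[\ell],\mathbb{B}_{n}[n-\ell]).
\]

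Plugging this value of $S$ into $\widehat{F}(\kappa;\Sigma)=P(\kappa)\cdot S$ gives \eqref{For2}, since $P(\kappa)\cdot\tfrac{(n)_{\ell}}{(2\pi)^{\ell/2}\kappa_{n-\ell}}$ is precisely $\mathcal{M}(\kappa;\Sigma,\ell,n)$ as defined in the statement. Formula \eqref{For3} then follows from \eqref{For2} by inserting the relation \eqref{IntMix} between intrinsic and mixed volumes applied with $K=\mathcal{E}_{\Sigma}$ and $j=\ell$, i.e.\ $V(\mathcal{E}_{\Sigma}[\ell],\mathbb{B}_{n}[n-\ell])=\kappa_{n-\ell}{n\choose\ell}^{-1}V_{\ell}(\mathcal{E}_{\Sigma})$. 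Finally, the displayed special case \eqref{Kab} is just the instance $\kappa=(0)$ of \eqref{For2}--\eqref{For3}, for which $\mathcal{M}((0);\Sigma,\ell,n)=\tfrac{(n)_{\ell}}{(2\pi)^{\ell/2}\kappa_{n-\ell}}$.

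Everything above is bookkeeping once one knows that $S$ equals an explicit constant times $V(\mathcal{E}_{\Sigma}[\ell],\mathbb{B}_{n}[n-\ell])$, and I expect this identification to be the only genuine obstacle; in the argument above it is imported as a black box from \cite{Kab14}. A self-contained derivation would instead proceed by first descending the integral defining $S$ from the Stiefel manifold $O(n,\ell)$ to the Grassmannian $G(n,\ell)$ --- the integrand $\det(U\Sigma^{-1}U^{T})$ is invariant under $U\mapsto QU$ with $Q\in O(\ell)$, so the integral equals $\int_{G(n,\ell)}\det(L\Sigma^{-1}L^{T})^{-(n+1)/2}\,\nu_{n,\ell}(d\mathscr{L})$ for $L$ any orthonormal basis of $\mathscr{L}$ --- and then matching it, via Kubota's formula \eqref{Kubota} in the form $V_{\ell}(\mathcal{E}_{\Sigma})={n\choose\ell}\tfrac{\kappa_{n}}{\kappa_{n-\ell}}\int_{G(n,\ell)}\det(L\Sigma L^{T})^{1/2}\,\nu_{n,\ell}(d\mathscr{L})$, against the classical change-of-variables identity on the Grassmannian that relates the weights $\det(L\Sigma^{-1}L^{T})^{-(n+1)/2}$ and $\det(L\Sigma L^{T})^{1/2}$; it is this integral-geometric identity --- the heart of \cite{Kab14} --- that represents the real difficulty.
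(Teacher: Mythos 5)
Your proposal is correct, but it proves the theorem by a genuinely different route than the paper. You factor \eqref{For1} as $\widehat F(\kappa;\Sigma)=P(\kappa)\cdot S$ with $S$ independent of $\kappa$, identify $S=\widehat F((0);\Sigma)=\E{\det(XX^T)^{1/2}}$ (both steps are sound: $P((0))=1$, $H^{(\ell,n)}_{(0)}\equiv 1$, $c((0);\Sigma)=1$), and then import the value of this expectation from \cite[Theorem 1.1]{Kab14}, i.e.\ from \eqref{Kab11} with $\Sigma_1=\dots=\Sigma_\ell=\Sigma$; \eqref{For3} then follows from \eqref{IntMix} exactly as in the paper. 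The paper instead never invokes \cite{Kab14}: it proves directly the integral-geometric identity behind $S$, namely \eqref{Identity2}, by descending from $O(n,\ell)$ to the Grassmannian, recognizing $\Pi_{n,\ell}(dU)=\det(\Sigma)^{-\ell/2}\det(U\Sigma^{-1}U^T)^{-n/2}\tilde\mu(dU)$ from \eqref{denU} as the invariant probability measure on $G(n,\ell)$, converting the weight $\det([U\Sigma^{-1}U^T]^{-1})^{1/2}$ into $\det(U\Sigma U^T)^{1/2}=\kappa_\ell^{-1}\mathrm{vol}_\ell(\mathcal{E}_\Sigma|\mathscr{U})$, and concluding with Kubota's formula \eqref{Kubota} — precisely the "change-of-variables identity on the Grassmannian" that you flag as the real difficulty and import as a black box. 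What your route buys is brevity: Theorem \ref{CoeffSigma} plus one citation settles everything. What the paper's route buys is self-containedness and its byproducts: \eqref{Kab} becomes an independent re-derivation of the Kabluchko--Zaporozhets formula in the equal-covariance case (so Remark (a)'s claim that \eqref{Kab} "coincides with" \eqref{Kab11} is a genuine consistency check, not a tautology), and the new Stiefel-integral representation of $V_\ell(\mathcal{E}_\Sigma)$ in Remark (b) is obtained as an identity proved from scratch rather than as a consequence of \cite{Kab14}. Under your argument these extras are lost, but the theorem as stated is fully proved.
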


\begin{Rem}
\begin{enumerate}[label=\textbf{(\alph*)}]
\item
We point out that \eqref{Kab} coincides with \eqref{Kab11} in the case where
$\Sigma_i=\Sigma$ for $i=1,\ldots,\ell$. In this sense, relations \eqref{For2} and \eqref{For3} therefore considerably generalize the content of Theorem 1.1 in \cite{Kab14} to arbitrary chaotic projection coefficients $\hat{F}(\kappa;\Sigma)$ associated with partitions $\kappa$ of order  $k \geq 1$.  
\item Our proof of Theorem \ref{InterVol} suggests the following new relation for intrinsic volumes of ellipsoids 
\begin{gather*}
V_{\ell}(\mathcal{E}_{\Sigma})=
{n\choose \ell}\frac{\kappa_n}{\kappa_{n-\ell}}\det(\Sigma)^{-\ell/2}     
\int_{O(n,\ell)}   \det(U\Sigma^{-1}U^T)^{-(n+1)/2}  \tilde{\mu}(dU)  , \quad 1\leq \ell \leq n,
 \label{Identity}
\end{gather*}
where $\tilde{\mu}$ indicates the Haar probability measure on the Stiefel manifold $O(n,\ell)$. 
\item
In Section \ref{General}, we sketch an attempt to further generalize the findings of Kabluchko  and Zaporozhets to the more general setting where the rows of $X$ are independent with respective covariance matrices $\Sigma_1,\ldots,\Sigma_{\ell}$. As we will explain, we are not successful to adapt our techniques employed in the proof of Theorems \ref{CoeffSigma} and \ref{InterVol} to this more general framework. Such a difficulty may be explained by the fact that the polynomials defined in \eqref{HerOmega} are not easily tractable for matrix calculus, as we have to deal with each row separately.
\end{enumerate}
\end{Rem}
\medskip
The following Corollary is obtained from Theorem \ref{CoeffSigma} applied with $\Sigma=\Id_{n}$, that is, when $X$ has independent rows with independent coordinates. In this case, we have $H_{\kappa}^{(\ell,n)}(X;\Id_n)=H_{\kappa}^{(\ell,n)}(X)$ and $\widehat{F}(\kappa;\Id_n)=\widehat{F}(\kappa)$ as in \eqref{Coeff}. 
\begin{Cor}\label{CoeffId}
For integers $1\leq \ell \leq n$, let $X \sim \mathscr{N}_{\ell \times n}(0,\Id_{\ell}\otimes \Id_n)$. Then, $F(X)=\det(XX^T)^{1/2}$ is an element of $L^2(\mu_X)$, and one has the decomposition 
\begin{gather*}
F(X)= \sum_{k\geq0} \sum_{\kappa\vdash k} \widehat{F}(\kappa) H_{\kappa}^{(\ell,n)}(X), 
\end{gather*}
where the Fourier-Hermite coefficients of $F$ are given by the formula
\begin{gather}\label{ForId}
\widehat{F}(\kappa) 
= 2^{\ell/2}  \frac{\Gamma_{\ell}(\frac{n+1}{2})}{\Gamma_{\ell}(\frac{n}{2})} \frac{ (-2)^{k}  }{ k!}\bigg(\frac{n}{2}\bigg)_{\kappa} 
\sum_{s=0}^{k} \sum_{\sigma\vdash s}
{\kappa \choose \sigma} (-1)^s\frac{(\frac{n+1}{2})_{\sigma}}{(\frac{n}{2})_{\sigma}}.
\end{gather}
In particular, 
\begin{gather}\label{coeff0}
\E{\det(XX^T)^{1/2}}= 2^{\ell/2}\frac{\Gamma_{\ell}(\frac{n+1}{2})}{\Gamma_{\ell}(\frac{n}{2})}.
\end{gather}
\end{Cor}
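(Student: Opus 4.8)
The plan is to obtain Corollary \ref{CoeffId} as a direct specialization of Theorem \ref{CoeffSigma} to the case $\Sigma=\Id_n$, together with a routine evaluation at the empty partition for the last display. First I would record that $\det(\Id_n)=1$, so that the definition \eqref{HerSigma} gives $H_{\kappa}^{(\ell,n)}(\cdot;\Id_n)=H_{\kappa}^{(\ell,n)}(\cdot)$ and $c(\kappa;\Id_n)=c(\kappa)$ with $c(\kappa)$ as in \eqref{HerONB}. Hence the orthonormal system $\mathbb{H}_{\Id_n}$ from \eqref{cSigma} coincides with $\mathbb{H}_{[\ell\times n]}$ from \eqref{HerONB}, and the projection coefficient $\widehat{F}(\kappa;\Id_n)$ defined in \eqref{Fourier} coincides with $\widehat{F}(\kappa)$ in \eqref{Coeff}. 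The membership $F(X)=\det(XX^T)^{1/2}\in L^2(\mu_X)$ and the validity of the chaos decomposition are then inherited verbatim from Theorem \ref{CoeffSigma}.

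Next I would simplify the right-hand side of \eqref{For1} under $\Sigma=\Id_n$. The prefactors $\det(\Sigma)^{-\ell k}$ and $\det(\Sigma)^{-\ell/2}$ both equal $1$, and the Stiefel integral reduces to $\int_{O(n,\ell)}\det(UU^T)^{-(n+1)/2}\,\tilde{\mu}(dU)$. By definition of the Stiefel manifold (see \eqref{Pol}), every $U\in O(n,\ell)$ satisfies $UU^T=\Id_{\ell}$, so the integrand is identically equal to $1$; since $\tilde{\mu}$ is a probability measure, the integral equals $1$. What survives of \eqref{For1} is then exactly formula \eqref{ForId}, which is the claimed expression for $\widehat{F}(\kappa)$.

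Finally, for the particular value \eqref{coeff0} I would take $\kappa=(0)$, the empty partition of $k=0$: in \eqref{ForId} the double sum collapses to its single term $s=0$, $\sigma=(0)$, for which $\binom{(0)}{(0)}=1$ by \eqref{Bin} and all the generalized Pochhammer symbols $(\tfrac{n}{2})_{(0)}$, $(\tfrac{n+1}{2})_{(0)}$ are empty products equal to $1$, leaving $\widehat{F}((0))=2^{\ell/2}\,\Gamma_{\ell}(\tfrac{n+1}{2})/\Gamma_{\ell}(\tfrac{n}{2})$. On the other hand $H_{(0)}^{(\ell,n)}\equiv 1$ and, using $C_{(0)}(\Id_{\ell})=1$ (the $k=0$ case of \eqref{defZon}), one has $c((0))=1$, so \eqref{Coeff} gives $\widehat{F}((0))=\E{F(X)}=\E{\det(XX^T)^{1/2}}$, which is precisely \eqref{coeff0}.

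I do not anticipate a genuine obstacle: the argument is pure bookkeeping built on Theorem \ref{CoeffSigma}. The only points warranting care are tracking the substitution $\Sigma^{-1}\to\Id_n$ inside the Stiefel integral so that its integrand is indeed constant, and verifying the normalizing constants $c((0))=1$ and $H_{(0)}^{(\ell,n)}\equiv 1$ needed to identify $\widehat{F}((0))$ with the expectation.
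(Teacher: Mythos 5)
Your proposal is correct and is exactly the route the paper takes: the Corollary is stated there as Theorem \ref{CoeffSigma} specialized to $\Sigma=\Id_n$, where $\det(\Sigma)=1$ and $U\Sigma^{-1}U^T=UU^T=\Id_{\ell}$ make the determinant prefactors and the Stiefel integral trivial, so \eqref{For1} collapses to \eqref{ForId}. Your evaluation at $\kappa=(0)$ (with $c((0))=1$ and $H_{(0)}^{(\ell,n)}\equiv 1$, so that $\widehat{F}((0))=\E{F(X)}$) matches how the paper reads off \eqref{coeff0}.
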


\begin{Rem}
\begin{enumerate}[label=\textbf{(\alph*)}]
\item 
Combining the contents of Corollary \ref{CoeffId} and Theorem \ref{ThmWC}, we see that  \eqref{ForId} provides the chaotic projection coefficients associated with the Wiener-chaos decomposition of $\det(XX^T)^{1/2}$. In Section \ref{SecApp}, we consider functionals of multi-dimensional Gaussian fields arising in stochastic geometry, that admit a certain integral representation  in terms of Jacobian determinants, and 
effectively use formula \eqref{ForId} to obtain a compact expression of their Wiener-It\^{o} chaos expansions. 
\item  Formula \eqref{coeff0} is to be compared with the definition of $\alpha(\ell,n)$    in \cite[Eq.(1.8)]{Not20}  and in particular with Remark 1.2 (a) therein for a link to the so-called \textit{flag coefficients}
\begin{eqnarray*}
 { n\brack \ell}:={n \choose \ell} \frac{\kappa_{n}}{\kappa_{n-\ell}\kappa_{\ell}},
\end{eqnarray*}
 also appearing in the Gaussian Kinematic formula (see for instance Chapter 13 in \cite{AT09}). In particular, one has that 
 \begin{eqnarray*}
 \E{\det(XX^T)^{1/2}}
 = \frac{\ell! \kappa_{\ell}}{(2\pi)^{\ell/2}} { n\brack \ell}.
 \end{eqnarray*}
\end{enumerate}
\end{Rem}

\subsection{Generalized Ornstein-Uhlenbeck semigroup}\label{SecOrtho}
\subsubsection{A Mehler-type representation}
In this section, we provide the 
equivalent counterpart on matrix spaces of the classical Ornstein-Uhlenbeck semigroup 
$\{P_t:t\geq 0\}$ on $\R$ defined via \textit{Mehler's formula} (see e.g. \cite[Theorem 2.8.2]{NP12})
\begin{gather*}\label{Mehler}
P_tf(x) = \E{f(e^{-t}x+\sqrt{1-e^{-2t}}X_0)} , \quad X_0 \sim \mathscr{N}(0,1), \quad x \in \R,\quad t\geq0.
\end{gather*}
For an integer $d\geq 1$ and $f:\R^d \to \R$, we write 
\begin{eqnarray}\label{Eq:Ptd}
P_t^{(d)} f(x) = \E{f(e^{-t}x+\sqrt{1-e^{-2t}}X_0)}, \quad X_0 \sim \mathscr{N}_d(0, \Id_d), \quad x \in \R^d, \quad t\geq 0
\end{eqnarray}
for the Ornstein-Uhlenbeck operator in dimension $d$, in such a way that $P_t=P_t^{(1)}$. 
We fix integers $1\leq \ell\leq n$, and define the space 
\begin{gather}\label{Piln}
\Pi(\ell,n)= \{ f: \R^{\ell \times n}\to \R: f(XH)=f(X) \textrm{ for every } H \in O(n)\},
\end{gather}
that is, an element of $\Pi(\ell,n)$ is a matrix-variate function that is right-invariant under orthogonal transformations. For a diagonal matrix $A=\mathrm{diag}(a_1,\ldots,a_n)\in \R^{n\times n}$ with $a_1,\ldots,a_n \geq 0$ and $f\in\Pi(\ell,n)$, we introduce the operator
\begin{gather}\label{OU}
\mathcal{O}_{t;A}^{(\ell,n)}f(X) = \E{ \int_{O(n)} f(XHe^{-tA}+X_0(\Id_n-e^{-2tA})^{1/2}) \tilde{\mu}(dH)\bigg|X} , \quad t\geq0
\end{gather}
where the expectation is taken with respect to $X_0 \sim \mathscr{N}_{\ell\times n}(0,\Id_{\ell}\otimes \Id_n)$, for a matrix $M \in \R^{n\times n}$,   
\begin{gather*}
e^{tM} = \sum_{p \geq 0} \frac{1}{p!}(tM)^p 
\end{gather*} 
denotes the matrix exponential of $M$, and $\tilde{\mu}$ indicates the probability Haar measure on the orthogonal group $O(n)$. 

The next result specifies the action of the operators
$\mathcal{O}_{t;A}^{(\ell,n)}$ for generic diagonal matrices $A$ with non-negative entries on the class of matrix-variate Hermite polynomials and  naturally complements the action of $P_t$ on Hermite polynomials on the real line  given by (see e.g.  \cite[Proposition 1.4.2]{NP12})
\begin{gather}\label{ActPt}
P_t H_k(x) = e^{-kt} H_k(x).
\end{gather}
\begin{Thm}\label{Action}
For every diagonal matrix $A=\mathrm{diag}(a_1,\ldots, a_n)\in \R^{n\times n}$ such that $a_1,\ldots,a_n\geq0$, 
 every integer $k\geq0$ and every partition 
 $\kappa \vdash k$, we have that
\begin{gather}\label{OURel}
\mathcal{O}_{t;A}^{(\ell,n)}H_{\kappa}^{(\ell,n)}(X) = \frac{C_{\kappa}(e^{-2tA})}{C_{\kappa}(\Id_n)}H_{\kappa}^{(\ell,n)}(X).
\end{gather}
In particular, the family $\{\mathcal{O}_{t;A}^{(\ell,n)}:t\geq 0\}$ is a semigroup on the class $\Pi(\ell,n)$ if and only if $a_1=\ldots=a_n=a$. More precisely, in this case, $\mathcal{O}_{t;A}^{(\ell,n)}$ coincides with  $P_{at}^{(\ell n)}$ on the class $\Pi(\ell,n)$. 
\end{Thm}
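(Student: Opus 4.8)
The plan is to reduce the action of $\mathcal{O}_{t;A}^{(\ell,n)}$ on $H_\kappa^{(\ell,n)}$ to a computation involving the Mehler kernel together with the Rodrigues-type formula \eqref{Rod}. First I would observe that it suffices to prove \eqref{OURel} when $f=H_\kappa^{(\ell,n)}$, exploiting that $H_\kappa^{(\ell,n)}$ lies in $\Pi(\ell,n)$ (it is a symmetric function of the eigenvalues of $XX^T$, hence right-$O(n)$-invariant) so that the operator is well-defined on it. The key algebraic input is the generalized Mehler-type identity for zonal polynomials: one should show that integrating $C_\kappa((XHe^{-tA}+Y)(XHe^{-tA}+Y)^T)$ over $H\in O(n)$ and over the matrix-Gaussian law of $Y$ produces $\tfrac{C_\kappa(e^{-2tA})}{C_\kappa(\Id_n)}$ times $C_\kappa(XX^T)$ plus lower-order zonal terms, and then invoke the expansion \eqref{HerZon} of $H_\kappa^{(\ell,n)}$ in zonal polynomials term by term. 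The cleanest route, though, is probably to work directly with the Rodrigues formula \eqref{Rod}: write $H_\kappa^{(\ell,n)}(X)\phi^{(\ell,n)}(X) = 4^{-k}(n/2)_\kappa^{-1} C_\kappa(\partial X\partial X^T)\phi^{(\ell,n)}(X)$, note that the Mehler operator $\mathcal{O}_{t;A}^{(\ell,n)}$ (being an average of Gaussian convolutions composed with the $O(n)$-average) has a transparent action on the Gaussian density and on the matrix differential operator, and track how $C_\kappa(\partial X\partial X^T)$ transforms under the change of variables implicit in the Mehler semigroup.

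Concretely, the second step is to diagonalize. Since $A$ is diagonal, one can pass to the matrix $X_0$ integration and, using left/right invariance of $\mu$ on $O(n)$ and of the standard normal law under $O(n)$, rewrite $\mathcal{O}_{t;A}^{(\ell,n)}f(X) = \mathbb{E}\int_{O(n)} f(XHD_t + X_0 E_t)\tilde\mu(dH)$ with $D_t = e^{-tA}$, $E_t = (\Id_n - e^{-2tA})^{1/2}$ diagonal. Applying this to $f = C_\sigma(\,\cdot\, (\cdot)^T)$ for $\sigma\vdash s$, one should expand $(XHD_t + X_0 E_t)(XHD_t+X_0E_t)^T$ and use the Gaussian integration formulas for zonal polynomials — specifically the analog of \eqref{intzonal} and the fact that $\mathbb{E}[C_\sigma(X_0 B X_0^T)] = (\text{const})\cdot C_\sigma(B)$ type identities — together with the multiplicativity/linearization \eqref{akt} to collect the coefficient of each $C_\tau(XX^T)$. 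The eigenvalue $\tfrac{C_\kappa(e^{-2tA})}{C_\kappa(\Id_n)}$ should emerge because $C_\kappa$ of the ``surviving'' quadratic term in $X$ carries exactly a factor $C_\kappa(D_t^2) = C_\kappa(e^{-2tA})$ by the homogeneity and invariance properties \eqref{Inv}, normalized against $C_\kappa(\Id_n)$ from the $O(n)$-averaging.

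The last step is the semigroup dichotomy. Given \eqref{OURel}, the family $\{\mathcal{O}_{t;A}^{(\ell,n)}\}$ is a semigroup on $\Pi(\ell,n)$ iff the eigenvalues multiply correctly on the common eigenbasis $\{H_\kappa^{(\ell,n)}\}$, i.e. iff $\frac{C_\kappa(e^{-2(t+u)A})}{C_\kappa(\Id_n)} = \frac{C_\kappa(e^{-2tA})}{C_\kappa(\Id_n)}\cdot\frac{C_\kappa(e^{-2uA})}{C_\kappa(\Id_n)}$ for all $\kappa$. Testing this on $\kappa=(1)$ gives $C_{(1)}(S) = \Tr(S)$, so the condition becomes $\sum_j e^{-2(t+u)a_j} = \frac{1}{n}(\sum_j e^{-2ta_j})(\sum_j e^{-2ua_j})$, which forces all $a_j$ equal; conversely if $a_1 = \cdots = a_n = a$ then $C_\kappa(e^{-2tA}) = C_\kappa(e^{-2ta}\Id_n) = e^{-2kta}C_\kappa(\Id_n)$ by homogeneity, so the eigenvalue is $e^{-2kat}$, matching the eigenvalue of $P_{at}^{(\ell n)}$ on $H_\kappa^{(\ell,n)}$ (recall $H_\kappa^{(\ell,n)}$ sits in the $2k$-th Wiener chaos by Theorem \ref{ThmWC}, on which $P_s^{(\ell n)}$ acts as $e^{-2ks}$); since both operators agree on a complete system of $L^2(\mu_X)$ they coincide on $\Pi(\ell,n)$. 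I expect the main obstacle to be the middle step: rigorously carrying out the zonal-polynomial Gaussian integration of $C_\sigma$ evaluated at a sum of a deterministic and a Gaussian matrix, keeping careful track of the linearization coefficients $a_{\tau,\sigma}^\kappa$ and Pochhammer normalizations so that the lower-order terms reassemble into exactly $\frac{C_\kappa(e^{-2tA})}{C_\kappa(\Id_n)} H_\kappa^{(\ell,n)}$ with no residue — this is where the combinatorial identity relating \eqref{HerZon}, \eqref{akt} and \eqref{Bin} must be used decisively.
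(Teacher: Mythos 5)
Your outline identifies the right reduction (it suffices to act on $H_{\kappa}^{(\ell,n)}$, which lies in $\Pi(\ell,n)$), but the heart of the theorem -- the eigenvalue relation \eqref{OURel} -- is exactly the ``middle step'' that you leave as an acknowledged obstacle, and as written it would not close. Averaging $C_{\sigma}\bigl((XHe^{-tA}+X_0E_t)(XHe^{-tA}+X_0E_t)^T\bigr)$ over $H$ and $X_0$ produces mixed terms of the form $XHe^{-tA}E_tX_0^T$ which are not covered by \eqref{intzonal}, by Wishart-type identities $\E{C_{\sigma}(X_0BX_0^T)}\propto C_{\sigma}(B)$, or by the linearization \eqref{akt}: these tools handle zonal polynomials of a single quadratic argument, not of a sum of a deterministic and a Gaussian matrix with cross terms, and no mechanism is given for the lower-order zonal contributions to reassemble into $\tfrac{C_{\kappa}(e^{-2tA})}{C_{\kappa}(\Id_n)}H_{\kappa}^{(\ell,n)}$ ``with no residue''. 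The paper resolves precisely this point by a different device: it invokes the auxiliary polynomials $P_{\kappa}(X,A)$ of Hayakawa with the three properties \eqref{P1}--\eqref{P3}, and proves the key lemma \eqref{Aux}, namely $\EX{X_0}{H_{\kappa}^{(\ell,n)}(X\Delta+X_0(\Id_n-\Delta^2)^{1/2})}=2^{-k}(n/2)_{\kappa}^{-1}P_{\kappa}(X/\sqrt2,\Delta^2)$, by expanding $C_{\kappa}$ into monomials in the matrix entries via \eqref{HOM}, using the entrywise complex-Gaussian representation $\E{(w+iv)^{m}}=2^{-m/2}H_m(\sqrt2\,w)$ from \eqref{P3}, and applying the \emph{univariate} Mehler action \eqref{ActPt} coordinate by coordinate; the $O(n)$-average is then absorbed in one stroke by \eqref{P2}, which is where the factor $C_{\kappa}(e^{-2tA})/C_{\kappa}(\Id_n)$ actually comes from. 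Without this lemma (or an equivalent substitute, e.g.\ a worked-out splitting formula for zonal polynomials of $XHD_t$ plus Gaussian noise), your argument is a plan rather than a proof; the alternative ``Rodrigues formula'' route you mention is likewise not carried out.

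There is also a flaw in your final step. From agreement of $\mathcal{O}_{t;a\Id_n}^{(\ell,n)}$ and $P_{at}^{(\ell n)}$ on the system $\{H_{\kappa}^{(\ell,n)}\}$ you conclude coincidence on $\Pi(\ell,n)$, but that system is complete only in $L^2(\mu_X)$, i.e.\ among symmetric functions of the eigenvalues of $XX^T$, whereas $\Pi(\ell,n)$ consists of all right-$O(n)$-invariant functions, equivalently all functions of the matrix $XX^T$ (e.g.\ $X\mapsto (XX^T)_{11}$), which is a strictly larger class; so the completeness argument does not give the claimed identification on $\Pi(\ell,n)$. The paper instead proves it directly in one line: for $A=a\Id_n$, use $X_0H\eqLaw X_0$ and $f(XH)=f(X)$ to pull the rotation out of the Mehler average, yielding $\mathcal{O}_{t;A}^{(\ell,n)}f=P_{at}^{(\ell n)}f$ pointwise for every $f\in\Pi(\ell,n)$. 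On the positive side, your necessity argument for the semigroup property (testing the multiplicativity of eigenvalues on $\kappa=(1)$, forcing $\sum_j e^{-2(t+u)a_j}=\tfrac1n\sum_j e^{-2ta_j}\sum_j e^{-2ua_j}$ and hence $a_1=\cdots=a_n$) is correct and in fact more general than the explicit $\ell=n=2$ counterexample the paper points to in Remark \ref{RemOU}.
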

From \eqref{OURel} it becomes clear that the polynomials $H_{\kappa}^{(\ell,n)}$ are eigenfunctions of $\mathcal{O}_{t;A}^{(\ell,n)}$ with respective eigenvalue $ C_{\kappa}(e^{-2tA}) C_{\kappa}(\Id_n)^{-1}$. Moreover, if $F \in L^2(\mu_X)$ admits the expansion \eqref{Decomp}, 
then $\mathcal{O}_{t;A}^{(\ell,n)}F \in L^2(\mu_X)$ and 
\begin{eqnarray*}
\mathcal{O}_{t;A}^{(\ell,n)}F = \sum_{k\geq 0}\sum_{\kappa\vdash k} \widehat{F}(\kappa)\frac{C_{\kappa}(e^{-2tA})}{C_{\kappa}(\Id_n)} H_{\kappa}^{(\ell,n)},
\end{eqnarray*}
that is, the projection coefficients of $\mathcal{O}_{t;A}^{(\ell,n)}F$ are obtained from those of $F$ by multiplying by $C_{\kappa}(e^{-2tA}) C_{\kappa}(\Id_n)^{-1}$. Let us make some remarks about Theorem \ref{Action}. 

\begin{Rem}\label{RemOU}
\begin{enumerate}[label=\textbf{(\alph*)}]
\item 
Using the fact  that $H_{\kappa}^{(\ell,n)}$ is an element of the class $\Pi(\ell,n)$ (as can be seen for instance from \eqref{RelHerLag}),
we deduce from \eqref{OURel} applied with $A=\Id_n$ that
\begin{eqnarray}\label{Eq:Ptclassic}
P_t^{(\ell n)} H_{\kappa}^{(\ell,n)}(X) =\mathcal{O}_{t;\Id_n}^{(\ell,n)} H_{\kappa}^{(\ell,n)}(X)
= \frac{C_{\kappa}(e^{-2t\Id_n})}{C_{\kappa}(\Id_n)} H_{\kappa}^{(\ell,n)}(X)
= e^{-2kt} H_{\kappa}^{(\ell,n)}(X),
\end{eqnarray}
where we used that $C_{\kappa}(e^{-2t\Id_n})=e^{-2kt}C_{\kappa}(\Id_n)$ by homogeneity.
Recalling that   $H_{\kappa}^{(\ell,n)}(X)$ is an element of the $2k$-th Wiener chaos associated with $\mathrm{Vec}(X)$, it is clear that the classical Ornstein-Uhlenbeck semigroup $\{P_t^{(\ell n)}:t\geq0\}$ acts on the entries  $X_{ij}$ of $X$ via the relation  
$P_t^{(\ell n)} H_{\kappa}^{(\ell,n)}(X) = 
e^{-2kt} H_{\kappa}^{(\ell,n)}(X)$, which is consistent with \eqref{Eq:Ptclassic}.  
\item 
Let us assume that $A=\mathrm{diag}(a,\ldots, a), a\geq0$. Then the relation  in \eqref{OURel} reduces to
\begin{gather*}
\mathcal{O}_{t;A}^{(\ell,n)}H_{\kappa}^{(\ell,n)}(X) = e^{-2ta}H_{\kappa}^{(\ell,n)}(X),
\end{gather*}
in view of the relation $C_{\kappa}(e^{-2tA}) = e^{-2ta}C_{\kappa}(\Id_n)$. In particular, from this identity, one can directly verify the semigroup property verified by $\mathcal{O}_{t;A}^{(\ell,n)}$ on matrix-Hermite polynomials, as for every $s,t\geq 0$, 
\begin{eqnarray*}
\mathcal{O}_{t+s;A}^{(\ell,n)} H_{\kappa}^{(\ell,n)}(X) = e^{-2(t+s)a}H_{\kappa}^{(\ell,n)}(X)
= e^{-2ta} e^{-2sa}H_{\kappa}^{(\ell,n)}(X) 
= \mathcal{O}_{t;A}^{(\ell,n)} \mathcal{O}_{s;A}^{(\ell,n)} H_{\kappa}^{(\ell,n)}(X).
\end{eqnarray*}
Combining this relation  with 
\eqref{OURel} in particular suggests the identity
\begin{eqnarray*}
\frac{C_{\kappa}(e^{-2(t+s)A})}{C_{\kappa}(\Id_n)} = \frac{C_{\kappa}(e^{-2tA})C_{\kappa}(e^{-2sA})}{C_{\kappa}(\Id_n)^2},
\end{eqnarray*}
which fails to hold in the case where the diagonal entries of $A$ are not all equal. Indeed, for simplicity a direct computation in the case $\ell=n=2, \kappa=(1), a_1=1, a_2=2$ shows that the left and right-hand sides of the above relation are respectively given by 
\begin{eqnarray*}
\frac{1}{2} [ e^{-2(t+s)} + e^{-4(t+s)}], \quad 
\frac{1}{4} e^{-2t} e^{-4s},
\end{eqnarray*}
which are different. 
\end{enumerate}
\end{Rem}

\subsubsection{An extension of the orthogonality relation for matrix-variate Hermite polynomials}
It is well-known that for jointly standardized Gaussian random variables $X,Y$ such that $\E{XY}=\rho$, the 
univariate Hermite polynomials on the real line satisfy the orthogonality relation (see e.g. \cite[Proposition 2.2.1]{NP12})
\begin{gather}\label{H1}
\E{H_k(X)H_l(Y)}=\ind{k=l} \times k! \rho^{k}.\end{gather}
 Exploiting the action of the operator $\mathcal{O}_{t;A}^{(\ell,n)}$   on matrix-variate Hermite polynomials derived in Theorem \ref{Action} allows us to establish the matrix-counterpart of the orthogonality relation \eqref{H1}
in the setting where the correlation of the Gaussian matrix entries $X$ and $Y$ is reflected in a  matrix $R$.
This is the content of the following Theorem. 
\begin{Thm}\label{OrtA} 
Let $X, X_0\sim \mathscr{N}_{\ell \times n}(0, \Id_{\ell} \otimes \Id_n)$ be independent and $R$ be a deterministic matrix of dimension $n\times n$. Let $Y \eqLaw XR+X_0(\Id_n-R^2)^{1/2}$ in distribution. Then, for every integers $k,l \geq0$ and every partitions $\kappa \vdash k, \sigma\vdash l$, we have
\begin{gather}\label{RelOrtA}
\E{H_{\kappa}^{(\ell,n)}(X) H_{\sigma}^{(\ell,n)}(Y)}
= \ind{\kappa=\sigma}\times 4^{-k}\bigg(\frac{n}{2}\bigg)_{\kappa}^{-1} k ! C_{\kappa}(R^2)\frac{C_{\kappa}(\Id_{\ell})}{C_{\kappa}(\Id_n)} .
\end{gather}
\end{Thm}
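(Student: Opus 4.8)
The plan is to compute $\E{H_{\kappa}^{(\ell,n)}(X) H_{\sigma}^{(\ell,n)}(Y)}$ by first conditioning on $X$ and recognizing the inner expectation as the action of the Mehler-type operator of Theorem~\ref{Action}. Observe that $Y\eqLaw XR+X_0(\Id_n-R^2)^{1/2}$, which, after writing $R=e^{-tA}$ formally (i.e.\ choosing $A=\mathrm{diag}(a_1,\dots,a_n)$ with $e^{-ta_j}$ equal to the $j$-th eigenvalue of $R$ once $R$ is diagonalized, together with a conjugation by the orthogonal matrix of eigenvectors absorbed into the Haar average), matches the structure $XHe^{-tA}+X_0(\Id_n-e^{-2tA})^{1/2}$ appearing in the definition \eqref{OU} of $\mathcal{O}_{t;A}^{(\ell,n)}$. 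Since $H_{\sigma}^{(\ell,n)}\in\Pi(\ell,n)$ is right-invariant under $O(n)$ and symmetric in the eigenvalues of $XX^T$, the Haar average over $H\in O(n)$ in \eqref{OU} is harmless. Thus
\begin{gather*}
\CE{H_{\sigma}^{(\ell,n)}(Y)}{X} = \mathcal{O}_{t;A}^{(\ell,n)}H_{\sigma}^{(\ell,n)}(X) = \frac{C_{\sigma}(e^{-2tA})}{C_{\sigma}(\Id_n)}H_{\sigma}^{(\ell,n)}(X) = \frac{C_{\sigma}(R^2)}{C_{\sigma}(\Id_n)}H_{\sigma}^{(\ell,n)}(X),
\end{gather*}
where the middle equality is precisely \eqref{OURel} and the last uses \eqref{Inv} together with homogeneity of zonal polynomials (note $e^{-2tA}$ has eigenvalues equal to those of $R^2$). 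Some care is needed when $R$ is not positive semi-definite or not symmetric; but since only $R^2=R R^T$ enters through $C_{\sigma}$ and the Gaussian structure only sees $RR^T$, one may reduce to $R$ symmetric positive semi-definite by an orthogonal change of variables on the right, which leaves $H_\kappa^{(\ell,n)}(X)$ invariant in law; this is the step I expect to require the most care.

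Once the conditional expectation is in hand, I would take the outer expectation over $X\sim\mathscr{N}_{\ell\times n}(0,\Id_\ell\otimes\Id_n)$:
\begin{gather*}
\E{H_{\kappa}^{(\ell,n)}(X) H_{\sigma}^{(\ell,n)}(Y)} = \frac{C_{\sigma}(R^2)}{C_{\sigma}(\Id_n)}\,\E{H_{\kappa}^{(\ell,n)}(X) H_{\sigma}^{(\ell,n)}(X)}.
\end{gather*}
Now invoke the orthogonality relation \eqref{HerOrtho}: the right-hand expectation equals $\ind{\kappa=\sigma}\times 4^{-k}\left(\frac{n}{2}\right)_\kappa^{-1}k!\,C_\kappa(\Id_\ell)$. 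Substituting and using that $C_\sigma=C_\kappa$ when $\kappa=\sigma$ (so $k=l$) yields
\begin{gather*}
\E{H_{\kappa}^{(\ell,n)}(X) H_{\sigma}^{(\ell,n)}(Y)} = \ind{\kappa=\sigma}\times 4^{-k}\bigg(\frac{n}{2}\bigg)_{\kappa}^{-1} k!\, C_{\kappa}(R^2)\,\frac{C_{\kappa}(\Id_\ell)}{C_{\kappa}(\Id_n)},
\end{gather*}
which is exactly \eqref{RelOrtA}.

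The main obstacle is the identification of the conditional law of $Y$ given $X$ with the integrand of $\mathcal{O}_{t;A}^{(\ell,n)}$ when $R$ is an arbitrary (possibly non-symmetric, possibly indefinite) matrix: Theorem~\ref{Action} is stated for $A$ diagonal with $a_j\geq 0$, i.e.\ effectively for $e^{-tA}$ diagonal with entries in $(0,1]$, whereas here $R$ is a general $n\times n$ matrix. The resolution is that only $R^2=RR^T$ (equivalently, the singular values of $R$) ever appears, both in the Gaussian covariance of $Y$ and in the final answer, so one may replace $R$ by $(RR^T)^{1/2}$ without changing either side of \eqref{RelOrtA}; a right-multiplication of $X$ and $X_0$ by a common orthogonal matrix — under which $H_\kappa^{(\ell,n)}$ is invariant — handles the reduction cleanly. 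A secondary minor point is to confirm integrability so that Fubini/conditioning is legitimate, which follows since matrix-Hermite polynomials are polynomials in the entries and all Gaussian moments are finite.
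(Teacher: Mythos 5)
Your overall strategy coincides with the paper's: reduce to a symmetric (then diagonal) $R$, express the mixed moment through the Mehler-type operator of Theorem \ref{Action}, and finish with the orthogonality relation \eqref{HerOrtho}. However, your central displayed step is not correct as stated: the pointwise identity $\CE{H_{\sigma}^{(\ell,n)}(Y)}{X} = \mathcal{O}_{t;A}^{(\ell,n)}H_{\sigma}^{(\ell,n)}(X)$ fails in general when the eigenvalues of $R$ are not all equal. The operator defined in \eqref{OU} contains a Haar average over $H\in O(n)$ in which $H$ sits \emph{between} $X$ and $e^{-tA}$; right-invariance of $H_{\sigma}^{(\ell,n)}$ does not allow you to drop it, because $XHe^{-tA}+X_0(\Id_n-e^{-2tA})^{1/2}$ is not of the form $\bigl(Xe^{-tA}+X_0(\Id_n-e^{-2tA})^{1/2}\bigr)H'$ when the diagonal matrix does not commute with $H$. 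Indeed, by \eqref{Aux} the conditional expectation (for diagonal $R$) equals $2^{-l}\bigl(\tfrac{n}{2}\bigr)_{\sigma}^{-1}P_{\sigma}(X/\sqrt{2},R^2)$, which becomes proportional to $H_{\sigma}^{(\ell,n)}(X)$ only \emph{after} Haar-averaging, in view of \eqref{P2}. The repair is precisely the paper's manipulation: insert the Haar average at the level of the joint expectation, using $X\eqLaw XH$ together with $H_{\kappa}^{(\ell,n)}(XH)=H_{\kappa}^{(\ell,n)}(X)$, so that $\E{H_{\kappa}^{(\ell,n)}(X)H_{\sigma}^{(\ell,n)}(Y)}=\E{H_{\kappa}^{(\ell,n)}(X)\,\mathcal{O}_{1;R_*}^{(\ell,n)}H_{\sigma}^{(\ell,n)}(X)}$ with $R_*=\mathrm{diag}(\ln(1/r_1),\ldots,\ln(1/r_n))$; then Theorem \ref{Action} and \eqref{HerOrtho} yield \eqref{RelOrtA}. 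So the gap is repairable, but the justification you gave for discarding the Haar average is the wrong reason, and it is exactly where the argument needs the distributional invariance of $X$ rather than the invariance of $H_{\sigma}^{(\ell,n)}$.

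Two smaller points. First, your reduction of a general $R$ to $(RR^T)^{1/2}$ conflates $R^2$ with $RR^T$ and is not the right mechanism: as observed in Remark \ref{Rem:Ortho}, the covariance structure forces $R$ to be symmetric positive semidefinite with eigenvalues in $[0,1]$, and the clean reduction to the diagonal case is the paper's Step 2, namely writing $R=O\Delta_R O^T$, using $(XO,X_0O)\eqLaw(X,X_0)$ and the invariance of $H_{\kappa}^{(\ell,n)}$ under right multiplication by $O$, together with $C_{\kappa}(\Delta_R^2)=C_{\kappa}(R^2)$. Second, when some eigenvalue $r_j=0$ the substitution $R=e^{-R_*}$ is unavailable; the paper treats this case directly from \eqref{Aux} and \eqref{P2}, a case your proposal does not address.
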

Some remarks concerning Theorem \ref{OrtA} are in order.
\begin{Rem}\label{Rem:Ortho}
\begin{enumerate}[label=\textbf{(\alph*)}]
\item By independence of $X$ and $X_0$ and the distributional identity $Y \eqLaw XR+X_0(\Id_n-R^2)^{1/2}$, we have that for every $i,i'\in [\ell], j,j'\in [n]$, 
\begin{eqnarray*}
\E{X_{ij}Y_{i'j'}} = \sum_{k=1}^n\E{X_{ij}X_{i'k}}R_{kj'} = \sum_{k=1}^n \ind{i=i',j=k}R_{kj'}
= \ind{i=i'}R_{jj'},
\end{eqnarray*}
where we used that $X\sim \mathscr{N}_{\ell \times n}(0, \Id_{\ell} \otimes \Id_n)$, yielding that, for every $j,j'=1,\ldots,n, |R_{jj'}|\leq 1$ by   virtue of the Cauchy-Schwarz inequality. The above observation implies that $R$ is necessarily symmetric and positive-semidefinite as a covariance matrix, and therefore has non-negative eigenvalues $r_1,\ldots,r_n$. Note that if $R=\Delta=\mathrm{diag}(r_1,\ldots,r_n)$ is diagonal, we therefore necessarily have $|r_j|\leq 1$ for every $j=1,\ldots,n,$ so that $(\Id_n-R^2)^{1/2}$ is well-defined. Our arguments to prove Theorem \ref{OrtA} are   based on the following general reduction argument: for $f,g\in \Pi(\ell,n)$, writing $R=O\Delta O^T$ with $O\in O(n)$ and $\Delta=\mathrm{diag}(r_1,\ldots,r_n)$,
\begin{gather*}
\E{f(X)g(XO\Delta O^T+X_0O(\Id_n-R^2)^{1/2}O^T)}
= \E{f(XO)g(XO\Delta +X_0O(\Id_n-\Delta^2)^{1/2})}\\
=\E{f(X)g(X\Delta +X_0(\Id_n-\Delta^2)^{1/2})},
\end{gather*}  
where we used the fact that $f(X)=f(XO)$ and $g(XO^T)=g(X)$ since $f,g\in \Pi(\ell,n)$ as well as the fact that $(XO,X_0O)\eqLaw(X,X_0)$, showing in particular that  
 $|r_j|\leq 1$ for every $j=1,\ldots,n$.
\item  We point out two particular cases of Theorem \ref{OrtA}: (1) if  $R=\Id_n$, relation \eqref{RelOrtA} reduces to the orthogonality of Hermite polynomials stated in \eqref{HerOrtho} and 
(2)  when $R= \mathrm{diag}(\rho,\ldots,\rho)$, \eqref{RelOrtA} gives
\begin{gather}
\E{H_{\kappa}^{(\ell,n)}(X) H_{\sigma}^{(\ell,n)}(\rho X+\sqrt{1-\rho^2}X_0)}
= \ind{\kappa=\sigma}\times 4^{-k}\bigg(\frac{n}{2}\bigg)_{\kappa}^{-1} k ! C_{\kappa}(\rho^2 \Id_n)\frac{C_{\kappa}(\Id_{\ell})}{C_{\kappa}(\Id_n)}  \notag \\
= \ind{\kappa=\sigma}\times 4^{-k}\bigg(\frac{n}{2}\bigg)_{\kappa}^{-1} k ! \rho^{2k}C_{\kappa}(\Id_{\ell}), \label{Eq:Rigid}
\end{gather}
where we used homogeneity of zonal polynomials.  For   completeness, in Example \ref{Ex:Rigid} we present three explicit examples of \eqref{Eq:Rigid} for the Hermite polynomials in \eqref{HerPol} by relying on product formulae for univariate Hermite polynomials. 
\item Writing $\mathbf{X} = \mathrm{Vec}(X)$ and $ \mathbf{Y} = \mathrm{Vec}(Y)$, we know 
by Theorem \ref{ThmWC} that $H_{\kappa}^{(\ell,n)}(X) \in \mathbf{C}_{2k}^{\mathbf{X}}$
and $H_{\kappa}^{(\ell,n)}(Y) \in \mathbf{C}_{2k}^{\mathbf{Y}}$. In particular by orthogonality of Wiener chaoses, it is clear that $H_{\kappa}^{(\ell,n)}(X)$ and $H_{\sigma}^{(\ell,n)}(Y)$ are orthogonal in $L^2(\Prob)$ when $k\neq l$. Remarkably relation \eqref{RelOrtA} yields a stronger orthogonality in the sense that, even if $k=l$, the elements $H_{\kappa}^{(\ell,n)}(X)$ and $H_{\sigma}^{(\ell,n)}(Y)$, belonging both to the Wiener chaos of order $2k$,  are orthogonal as soon as $\kappa \neq \sigma$. 
\item 
Combining   relation \eqref{RelHerLag} with \eqref{RelOrtA}, we deduce an extended orthogonality relation for generalized matrix-variate Laguerre polynomials
\begin{gather*}
\E{L_{\kappa}^{(\frac{n-\ell-1}{2})}(2^{-1}XX^T)
L_{\sigma}^{(\frac{n-\ell-1}{2})}(2^{-1}YY^T)}
= \gamma_{\kappa}^{-1}
\gamma_{\sigma}^{-1}
\E{H_{\kappa}^{(\ell,n)}(X)H_{\sigma}^{(\ell,n)}(Y)}\\
= \ind{\kappa=\sigma}\times  \bigg(\frac{n}{2}\bigg)_{\kappa}^{-1} k ! C_{\kappa}(R^2)\frac{C_{\kappa}(\Id_{\ell})}{C_{\kappa}(\Id_n)} ,
\end{gather*}
where we used that 
$\gamma_{\kappa}:=(-2)^{-k} (\frac{n}{2})_{\kappa}^{-1}$, thus extending the orthogonality relation in \eqref{LagOrt} obtained for $R=\Id_n$. 
\end{enumerate}
\end{Rem}

\begin{Ex}\label{Ex:Rigid}
In this example, we explicitly compute the covariance
\begin{eqnarray*}
\E{H_{\kappa}^{(\ell,n)}(X)H_{\sigma}^{(\ell,n)}(Y)}, \quad Y:=\rho X+\sqrt{1-\rho^2}X_0
\end{eqnarray*}
  in the three examples (i) $\kappa=\sigma=(1)$, (ii) $\kappa=(2),\sigma=(1,1)$  and (iii) 
$\kappa=\sigma=(1,1)$
by relying on the explicit expansions of the corresponding matrix-Hermite polynomials in terms of univariate Hermite polynomials in \eqref{HerPol} and moment formulae for products of the latter. Our computations developed below are consistent with \eqref{Eq:Rigid}.
In view of the covariance structure between $X$ and $Y$, we have that $\E{X_{ij}Y_{i'j'}} = \ind{i=i',j=j'} \rho$.
 We start with (i).  Using the  covariance structure together with the  expression for $H_{(1)}^{(\ell,n)}$ in \eqref{HerPol}  yields
\begin{gather*}
\E{H_{(1)}^{(\ell,n)}(X)H_{(1)}^{(\ell,n)}(Y)}
= \frac{1}{4n^2} \sum_{i_1,i_2\in [\ell]}\sum_{j_1,j_2\in [n]} \E{H_2(X_{i_1j_1})H_2(Y_{i_2j_2})}\\
= \frac{\rho^2}{2n^2} \sum_{i_1,i_2\in [\ell]}\sum_{j_1,j_2\in [n]} \ind{i_1=i_2,j_1=j_2} 
= \frac{\rho^2}{2n^2}\ell n = \rho^2 \frac{\ell}{2n},
\end{gather*} 
where we used \eqref{H1}. 
On the other hand, using that $(n/2)_{(1)}=n/2$ and $C_{(1)}(\Id_{\ell})=\Tr(\Id_{\ell})=\ell$ yields from \eqref{Eq:Rigid}
\begin{eqnarray*}
\E{H_{(1)}^{(\ell,n)}(X)H_{(1)}^{(\ell,n)}(Y)}
= 4^{-1} \frac{2}{n} \rho^2 \ell = 
\rho^2 \frac{\ell}{2n},
\end{eqnarray*}
which coincides with the above. Let us now treat (ii). In view of \eqref{HerPol}, we can write 
\begin{eqnarray*}
H_{(2)}^{(\ell,n)}(X) := \frac{1}{12n(n+2)}\sum_{i=1}^5A_i(X), \quad 
H_{(1,1)}^{(\ell,n)}(Y) := 
\frac{1}{6n(n-1)}
(B_1(Y)+B_2(Y)),
\end{eqnarray*}
where $A_1(X),\ldots, A_5(X)$ and $B_1(Y),B_2(Y)$ are the double summations appearing in the respective definitions of $H_{(2)}^{(\ell,n)}(X)$ and  $H_{(1,1)}^{(\ell,n)}(Y)$ (including their multiplicative coefficient). We can thus compute 
\begin{eqnarray}\label{Eq:AB}
\E{H_{(2)}^{(\ell,n)}(X) H_{(1,1)}^{(\ell,n)}(Y)} = \frac{1}{12n(n+2)} \frac{1}{6n(n-1)}
\sum_{i=1}^5\sum_{j=1}^2 \E{A_i(X)B_j(Y)},
\end{eqnarray}
which is a sum of ten terms. First we recall the following relations for jointly standard Gaussian random variables $N_1,N_2,Z_1,Z_2$ such that $\E{N_1N_2}=\E{Z_1Z_2}=0$,
\begin{eqnarray*}
\E{H_4(N_1)H_2(Z_1)H_2(Z_2)} &=&
24\E{N_1Z_1}^2\E{N_1Z_2}^2,\\
\E{H_2(N_1)Z_1Z_2} &=& 2\E{N_1Z_1}\E{N_1Z_2},\\
\E{N_1N_2Z_1Z_2} &=& 
\E{N_1Z_1}\E{N_2Z_2}+
\E{N_1Z_2}\E{N_2Z_1}. 
\end{eqnarray*}
Combining these relations with the covariance structure between $X$ and $Y$, one verifies that 
\begin{eqnarray*}
\E{A_i(X)B_j(Y)} = 0, \quad  \forall (i,j)\notin \{(5,2),(4,1)\} 
\end{eqnarray*}
and 
\begin{eqnarray*}
\E{A_4(X)B_1(Y)} = -\E{A_5(X)B_2(Y)}
= 8\ell(\ell-1)n(n-1)\rho^4,
\end{eqnarray*}
implying in particular that 
$\E{H_{(2)}^{(\ell,n)}(X) H_{(1,1)}^{(\ell,n)}(Y)} =0$ in view of \eqref{Eq:AB}. Proceeding similarly for example (iii), we write
\begin{eqnarray*}
\E{H_{(1,1)}^{(\ell,n)}(X)H_{(1,1)}^{(\ell,n)}(Y)}
= \frac{1}{36n^2(n-1)^2}\sum_{i,j=1}^2\E{B_i(X)B_j(Y)}
\end{eqnarray*} 
where $B_1$ and $B_2$ are as above, for which we compute 
\begin{eqnarray*}
&&\E{B_1(X)B_1(Y)} = \E{A_4(X)B_1(X)} = 8\ell (\ell-1)n(n-1)\rho^4, \\ 
&&\E{B_1(X)B_2(Y)} = \E{B_2(X)B_1(Y)} = \E{A_4(X)B_2(Y)} = 0,\\
&&\E{B_2(X)B_2(Y)} = 
-\frac{1}{2}\E{A_5(X)B_2(Y)}
=4 \ell (\ell-1)n(n-1)\rho^4, 
\end{eqnarray*}
where $A_4$ and $A_5$ are the terms appearing in $H_{(2)}^{(\ell,n)}$. Summing these terms yields
\begin{eqnarray}\label{Eq:11}
\E{H_{(1,1)}^{(\ell,n)}(X)H_{(1,1)}^{(\ell,n)}(Y)}
= \frac{1}{36n^2(n-1)^2}\sum_{i,j=1}^2\E{B_i(X)B_j(Y)}
= \frac{1}{3n(n-1)}\ell(\ell-1)\rho^4.
\end{eqnarray}
On the other hand, computing 
$(n/2)_{(1,1)} = n(n-1)/4$ and $ C_{(1,1)}(\Id_{\ell}) = \frac{2}{3}\ell(\ell-1)$ yields from \eqref{Eq:Rigid}
\begin{eqnarray*}
\E{H_{(1,1)}^{(\ell,n)}(X)H_{(1,1)}^{(\ell,n)}(Y)}
=4^{-2}\bigg(\frac{n}{2}\bigg)_{(1,1)}^{-1} 2 ! \rho^{4}C_{(1,1)}(\Id_{\ell}) = 
 \frac{1}{3n(n-1)}\ell(\ell-1)\rho^4,
\end{eqnarray*}
which is consistent with \eqref{Eq:11}.
\end{Ex}

\subsection{Applications to geometric functionals of Gaussian random fields}\label{SecApp}
In this section, we apply our main results of Sections \ref{SecWC}, \ref{SectGeom} and \ref{SecOrtho} to the study of geometric functionals of multidimensional Gaussian fields. 

\medskip
In Section \ref{TotalVar}, we consider random variables admitting an integral representation in terms of Jacobian determinants associated with multi-dimensional Gaussian fields. We argue that such a definition can be interpreted as the \textit{total variation} of   vector-valued   
functions, generalizing the classical definition of total variation of multi-variate functions. More specifically, in the setting of a certain matrix correlation structure between two Jacobian matrices, appearing notably in the study of Gaussian Laplace eigenfunctions, we exploit the findings of Theorem \ref{OrtA} to obtain a precise expression for the variance of the total variation in terms of integrals of zonal polynomials.

\medskip
In Section \ref{SecARW}, we apply the general framework of Section \ref{TotalVar} to vectors of independent arithmetic random waves with the same eigenvalue on the three-dimensional torus, and prove a CLT in the high-energy regime for their generalized total variation on the full torus.

\medskip
In Section \ref{Nodal}, we consider the nodal volumes associated with vectors of independent arithmetic random waves on the three torus. In particular, we provide its Wiener-It\^{o} chaos expansions in terms of both, multivariate and matrix-variate Hermite polynomials, and provide some insight for variance estimates of its chaotic components.

\subsubsection{Generalized total variation of vector-valued functions}\label{TotalVar}

Let $n\geq 1$ be an integer and consider a centred smooth Gaussian field $\mathfrak{f}=\{\mathfrak{f}(z): z\in \R^n\}$ on $\R^n$. For $1\leq \ell \leq n$, we consider $\ell$ i.i.d copies 
$\mathfrak{f}^{(1)}, \ldots, \mathfrak{f}^{(\ell)}$
of $\mathfrak{f}$  and are interested in the $\ell$-dimensional Gaussian field
\begin{gather*}
\mathfrak{f}_{\ell}=\left\{\mathfrak{f}_{\ell}(z)=(\mathfrak{f}^{(1)}(z),\ldots,\mathfrak{f}^{(\ell)}(z)): z \in \R^n\right\}.
\end{gather*}
We denote by $\mathfrak{f}_{\ell}'(z) \in \R^{\ell \times n}$  the Jacobian matrix of $\mathfrak{f}_{\ell}$ evaluated at $z \in \R^n$. Moreover, we assume that (i) for every $z\in \R^n$, the distribution of $\mathfrak{f}_{\ell}(z)$ is non-degenerate and (ii) for every $z\in \R^n, \ \mathfrak{f}_{\ell}'(z) \sim \mathscr{N}_{\ell \times n}(0,\Id_{\ell}\otimes \Id_n)$. 
We define the following random variable.
\begin{Def}
For a compact domain $U \subset \R^n$, we define
\begin{eqnarray}\label{TOT}
\mathbf{V}_{\ell,n}(\mathfrak{f}_{\ell};U) := \int_{U} \Phi(\mathfrak{f}_{\ell}'(z)) dz, 
\end{eqnarray}
where $\Phi(M):=\det(MM^T)^{1/2}$ for $M \in \R^{\ell \times n}$. 
\end{Def} 
 We note that the above integral is well-defined since $U$ is compact and $\det(\mathfrak{f}_{\ell}'(z)) $ is a multivariate polynomial in the entries of $\mathfrak{f}_{\ell}'(z)$.
We remark that the random variable $\mathbf{V}_{\ell,n}(\mathfrak{f}_{\ell};U)$ can be seen as a generalization of the total variation of vector-valued functions. Indeed, for $\ell=1$, \eqref{TOT} coincides with the definition of the total variation for functions $\R^n \to \R$. For $\ell=n$, \cite{Fonseca,Guido} consider a \textit{relaxed total variation} of the Jacobian given by the Area formula (see e.g. \cite[Proposition 6.1]{AW09})
\begin{gather*}
\mathbf{V}_{n,n}(\mathfrak{f}_n;U) = \int_{U} |\det(\mathfrak{f}_n'(z))| dz = \int_{\R^n} N_y(\mathfrak{f}_n;U) dy ,
\end{gather*}
where $N_y(\mathfrak{f}_n;U)=\mathrm{card}(\{z \in U: \mathfrak{f}_n(z)=y\})$.  Using the Co-area formula (\cite[Proposition 6.13]{AW09}) in \eqref{TOT} shows that 
\begin{gather*}
\mathbf{V}_{\ell,n}(\mathfrak{f}_{\ell};U) = \int_{\R^{\ell}} \sigma_y(\mathfrak{f}_{\ell};U) dy,
\end{gather*}
where $\sigma_y(\mathfrak{f}_{\ell};U) $ denotes the $(n-\ell)$-dimensional Hausdorff measure of the level set $\{z\in U: \mathfrak{f}_{\ell}(z) = y\}$: Thus, the definition \eqref{TOT} generalizes the above setting to functions $\R^n \to \R^{\ell}$ with $\ell<n$.

\medskip
From now on, $1\leq\ell\leq n$ are fixed and we write $\mathbf{V}(\mathfrak{f}_{\ell};U)=\mathbf{V}_{\ell,n}(\mathfrak{f}_{\ell};U)$. The fact that, for every $z\in \R^n$,  $\Phi(\mathfrak{f}_{\ell}'(z))$ is an element of $L^2(\mu_{\mathfrak{f}_{\ell}'(z)})$ implies that $\mathbf{V}(\mathfrak{f}_{\ell};U)$ can be expanded in matrix-variate Hermite polynomials by means of Corollary \ref{CoeffId}, yielding its Wiener chaos expansion
\begin{gather}\label{chaos}
\mathbf{V}(\mathfrak{f}_{\ell};U) = \sum_{k \geq 0} \mathbf{V}(\mathfrak{f}_{\ell};U)[2k] \ , 
\quad \mathbf{V}(\mathfrak{f}_{\ell};U)[2k] = \sum_{\kappa \vdash k}  \widehat{\Phi}(\kappa) \int_U H_{\kappa}^{(\ell,n)}(\mathfrak{f}_{\ell}'(z))dz,
\end{gather} 
where $\widehat{\Phi}(\kappa)$ is as in \eqref{ForId} and $\mathbf{V}(\mathfrak{f}_{\ell};U)[2k]$ denotes the projection of $\mathbf{V}(\mathfrak{f}_{\ell};U)$ onto the Wiener chaos of order $2k$ associated with $\mathfrak{f}_{\ell}$.
In the following proposition, we compute the variance of the total variation of $\mathfrak{f}_{\ell}$ on $U$ in the specific framework, where the matrices $\mathfrak{f}_{\ell}'(z)$ and $\mathfrak{f}_{\ell}'(z')$ satisfy a certain matrix correlation structure for every $z,z'\in \R^n$ (see \eqref{CovMatrix} below).
\begin{Prop}\label{VarTV}
Let the above notation prevail. Assume furthermore that for every $z,z'\in \R^n$, 
\begin{gather}\label{CovMatrix}
\mathfrak{f}_{\ell}'(z') \eqLaw\mathfrak{f}_{\ell}'(z)R(z,z')+X_0(\Id_n-R(z,z')^2)^{1/2}, 
\end{gather}
in distribution, where $X_0=X_0(z,z')$ is an independent copy of $\mathfrak{f}_{\ell}'(z)$ and $R(z,z') $ is a deterministic   matrix. Then, 
\begin{gather}\label{VarV}
\V{\mathbf{V}(\mathfrak{f}_{\ell};U)}
= \sum_{k\geq1}\sum_{\kappa\vdash k} \widehat{\Phi}(\kappa)^2
4^{-k}\bigg(\frac{n}{2}\bigg)_{\kappa}^{-1} k !
\frac{C_{\kappa}(\Id_{\ell})}{C_{\kappa}(\Id_n)} \int_{U\times U} C_{\kappa}(R(z,z')^2)  dz dz' ,
\end{gather}
where $\widehat{\Phi}(\kappa)$ is as in \eqref{ForId}.
\end{Prop}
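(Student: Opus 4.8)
The plan is to feed the chaotic decomposition \eqref{chaos} into the variance and to reduce each cross-term to an instance of the orthogonality relation in Theorem \ref{OrtA}. First I would check that $\mathbf{V}(\mathfrak{f}_{\ell};U)\in L^2(\Prob)$: by assumption (ii) the law of $\mathfrak{f}_{\ell}'(z)$ is $\mathscr{N}_{\ell\times n}(0,\Id_{\ell}\otimes\Id_n)$ for every $z$, and $\Phi(X_0)=\det(X_0X_0^T)^{1/2}\in L^2(\Prob)$ for $X_0\sim\mathscr{N}_{\ell\times n}(0,\Id_{\ell}\otimes\Id_n)$ by Corollary \ref{CoeffId}, so Minkowski's integral inequality gives $\norm{\mathbf{V}(\mathfrak{f}_{\ell};U)}_{L^2(\Prob)}\leq \mathrm{vol}_n(U)\,\norm{\Phi(X_0)}_{L^2(\Prob)}<\infty$. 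In particular $\V{\mathbf{V}(\mathfrak{f}_{\ell};U)}<\infty$, and the convergence of the series in \eqref{VarV} will follow from the identity itself. Since the chaos projections $\mathbf{V}(\mathfrak{f}_{\ell};U)[2k]$ are pairwise orthogonal in $L^2(\Prob)$ and have zero mean for $k\geq1$, we get $\V{\mathbf{V}(\mathfrak{f}_{\ell};U)}=\sum_{k\geq1}\E{\mathbf{V}(\mathfrak{f}_{\ell};U)[2k]^2}$.

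Next, fix $k\geq1$ and use the expression $\mathbf{V}(\mathfrak{f}_{\ell};U)[2k]=\sum_{\kappa\vdash k}\widehat{\Phi}(\kappa)\int_U H_{\kappa}^{(\ell,n)}(\mathfrak{f}_{\ell}'(z))\,dz$ from \eqref{chaos}. Expanding the square, pulling the expectation inside the finite sum over $\kappa,\kappa'\vdash k$ and inside the integral over $U\times U$, one obtains
\[
\E{\mathbf{V}(\mathfrak{f}_{\ell};U)[2k]^2}=\sum_{\kappa,\kappa'\vdash k}\widehat{\Phi}(\kappa)\widehat{\Phi}(\kappa')\int_{U\times U}\E{H_{\kappa}^{(\ell,n)}(\mathfrak{f}_{\ell}'(z))\,H_{\kappa'}^{(\ell,n)}(\mathfrak{f}_{\ell}'(z'))}\,dz\,dz'.
\]
The exchange of $\E{\cdot}$ and $\int_{U\times U}$ is justified by Fubini's theorem: the integrand $\E{|H_{\kappa}^{(\ell,n)}(\mathfrak{f}_{\ell}'(z))\,H_{\kappa'}^{(\ell,n)}(\mathfrak{f}_{\ell}'(z'))|}$ is bounded, uniformly in $(z,z')\in U\times U$, by $\norm{H_{\kappa}^{(\ell,n)}(X_0)}_{L^2(\Prob)}\norm{H_{\kappa'}^{(\ell,n)}(X_0)}_{L^2(\Prob)}$ (Cauchy--Schwarz together with the $z$-independence of the distribution of $\mathfrak{f}_{\ell}'(z)$), while $U\times U$ has finite Lebesgue measure.

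Now comes the key step. For fixed $z,z'\in\R^n$, hypothesis \eqref{CovMatrix} says that $(\mathfrak{f}_{\ell}'(z),\mathfrak{f}_{\ell}'(z'))$ has the same joint law as $\big(X,\,XR(z,z')+X_0(\Id_n-R(z,z')^2)^{1/2}\big)$ with $X,X_0\sim\mathscr{N}_{\ell\times n}(0,\Id_{\ell}\otimes\Id_n)$ independent --- exactly the configuration of Theorem \ref{OrtA} with $R=R(z,z')$ (the matrix $R(z,z')$ being automatically symmetric and positive-semidefinite with eigenvalues in $[0,1]$, as recalled in Remark \ref{Rem:Ortho}(a), so that $(\Id_n-R(z,z')^2)^{1/2}$ is well-defined). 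Applying \eqref{RelOrtA} gives
\[
\E{H_{\kappa}^{(\ell,n)}(\mathfrak{f}_{\ell}'(z))\,H_{\kappa'}^{(\ell,n)}(\mathfrak{f}_{\ell}'(z'))}=\ind{\kappa=\kappa'}\,4^{-k}\bigg(\frac{n}{2}\bigg)_{\kappa}^{-1}k!\,C_{\kappa}(R(z,z')^2)\,\frac{C_{\kappa}(\Id_{\ell})}{C_{\kappa}(\Id_n)},
\]
which collapses the double sum over $\kappa,\kappa'$ to a single sum over $\kappa\vdash k$. Substituting back into $\E{\mathbf{V}(\mathfrak{f}_{\ell};U)[2k]^2}$ and summing over $k\geq1$ yields \eqref{VarV}; each integral $\int_{U\times U}C_{\kappa}(R(z,z')^2)\,dz\,dz'$ is finite since $R(z,z')^2$ has entries bounded by a constant (the entries of $R(z,z')$ being at most $1$ in absolute value), $C_{\kappa}$ is a polynomial, and $U\times U$ has finite measure.

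I expect the only genuinely delicate point to be the measure-theoretic bookkeeping: confirming that the term-by-term integration over $U$ built into \eqref{chaos} is legitimate --- so that $\mathbf{V}(\mathfrak{f}_{\ell};U)[2k]$ really has the stated form as an element of the $2k$-th Wiener chaos --- and then carrying out the Fubini interchange above. Everything else is a direct substitution once Theorem \ref{OrtA} is available.
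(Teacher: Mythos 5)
Your proposal is correct and follows essentially the same route as the paper: decompose via \eqref{chaos}, use orthogonality of the Wiener chaoses to reduce the variance to a sum over $k\geq 1$, and then apply Theorem \ref{OrtA} pointwise with $R=R(z,z')$ under \eqref{CovMatrix} to collapse the double sum over partitions. The extra checks you include (the $L^2$ bound via Minkowski and the Fubini justification) are sound refinements of the same argument, which the paper leaves implicit.
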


\subsubsection{Applications to Arithmetic Random Waves on the three-torus}\label{SecARW}

The study of local and non-local features in the high-energy regime associated with zero and non-zero level sets of Gaussian Laplace eigenfunctions on manifolds has gained great importance in past years, where different models have been taken into consideration. Celebrated models include Berry's monochromatic random waves (see e.g. \cite{MPRW16,NPR17,PV20,DEL19} and \cite{Ber77,Ber02} for seminal contributions), spherical harmonics (see e.g. \cite{Wig10,MP11,CMW16,CMW16b,MRW17}) and arithmetic random waves (ARW) on the torus (see e.g. \cite{ORW08,RW08,KKW13,PR16,Cam17,BM17,DNPR16,Not20}). 

\medskip
In this section, we apply the general framework presented in Section \ref{TotalVar} to the setting of vectors of independent arithmetic random waves on the three-torus, $\T$. 

\medskip
\noindent\underline{\textit{Arithmetic random waves on the $d$-torus.}}
Let $\tor{d}=\R^d/\Z^d=[0,1]/_{\sim}, \ d\geq2$ denote the torus of dimension $d$. Arithmetic random waves   on $\tor{d}$, first introduced in \cite{RW08,ORW08} are Gaussian Laplace eigenfunctions satisfying
\begin{gather*}
\Delta T_n(z) + E_n T_n(z) = 0, \quad E_n=4\pi^2 n, \ z \in \tor{d}
\end{gather*}
where 
\begin{gather*}
n \in S_d:=\left\{ m\geq 1: \exists (m_1,\ldots,m_d)\in \Z^d: m_1^2+\ldots+m_d^2=m\right\},
\end{gather*}
that is, $n$ is an integer expressible as a sum of $d$ integer squares. The set of \textit{frequencies} associated with $n\in S_d$ is  
\begin{gather*}
\Lambda_n:=\left\{ \lambda=(\lambda_1,\ldots,\lambda_d)\in \Z^d: \lambda_1^2+\ldots+\lambda_d^2=n\right\}, 
\end{gather*}
and we write $|\Lambda_n|=:\Nn$ for its cardinality, that is, $\Nn$ is the number of ways in which $n$ is represented as a sum of squares. An $L^2(\tor{d})$-basis of eigenfunctions is given by complex exponentials of the form $\{e_{\lambda}(\cdot):=\exp(2\pi i \scal{\lambda}{\cdot}):\lambda\in \Lambda_n\}$, where $\scal{\cdot}{\cdot}$ denotes the standard Euclidean inner product on $\R^d$. For $n \in S_d$, the ARW with eigenvalue $E_n$ is defined as random a linear combination of complex exponentials
\begin{gather}\label{Tn}
T_n(z) = \frac{1}{\sqrt{\Nn}} \sum_{\lambda \in \Lambda_n} a_{\lambda}e_{\lambda}(z),
\end{gather}
where the coefficients $\{a_{\lambda}:\lambda\in \Lambda_n\}$ are independent standard complex Gaussian random variables save for the relation $a_{-\lambda}=\overline{a_{\lambda}}$, which makes $T_n$ real-valued. Alternatively, ARWs are defined as the Gaussian process $\{T_n(z):z \in \tor{d}\}$ on $\tor{d}$ with covariance function
\begin{gather}\label{rn}
r^{(n)}(z,z'):=\E{T_n(z)T_n(z')} = \frac{1}{\Nn}\sum_{\lambda\in \Lambda_n} e_{\lambda}(z-z')=:r^{(n)}(z-z'), \quad z,z' \in \tor{d}.
\end{gather}
Note that $r^{(n)}$ only depends on the difference $z-z'$, meaning that the random field $\{T_n(z): z \in \tor{d}\}$ is stationary. Moreover, the fact that $r_n(0)=1$, implies that for every $z\in \tor{d}$, $T_n(z)$ has variance one.

\medskip
\noindent\underline{\textit{Total variation of vectors of ARW on $\T$.}}
For an integer $1\leq \ell \leq 3$ and $n\in S_3$, we consider i.i.d copies $T_n^{(1)},\ldots, T_n^{(\ell)}$ of $T_n$ in \eqref{Tn} and consider the associated $\ell$-dimensional Gaussian field
\begin{gather}\label{MultiARW}
\bT_n^{(\ell)}:=\left\{\bT_n^{(\ell)}(z)=(T_n^{(1)}(z),\ldots,T_n^{(\ell)}(z)):z \in \T\right\}.
\end{gather}
Our specific goal is to study the high-energy behaviour of the total variation $\mathbf{V}(\bT_n^{(\ell)}; \T)$ (as defined in  \eqref{TOT}) of $\bT_n^{(\ell)}$ on the full torus, that is when $\Nn\to \infty$. 
Since for every $z \in \T$, we have 
\begin{gather*}
\V{\frac{\partial}{\partial {z_j}} T_n^{(i)}(z)} = \frac{E_n}{3},  \quad i\in [\ell], \ j \in [n],
\end{gather*}
we introduce the normalised partial derivatives
\begin{gather}\label{dernor}
\tilde{\partial}_{j}T_n^{(i)}(z) := \left(\frac{E_n}{3}\right)^{-1/2} \frac{\partial}{\partial {z_j}}T_n^{(i)}(z), 
\end{gather} 
with unit variance and write $\dot{\bT}_n^{(\ell)}(z) \in \R^{\ell \times n}$ for the normalised Jacobian matrix of $\bT_n^{(\ell)}$.
According to \eqref{TOT}, we use the homogeneity of the determinant in order to rewrite the total variation as  
\begin{gather}\label{totdef}
\mathbf{V}(\bT_n^{(\ell)};\T) 
= \left(\frac{E_n}{3}\right)^{\ell/2}\int_{\T} \Phi( \dot{\bT}_n^{(\ell)}(z)) dz,
\end{gather}
where $\Phi(M)=\sqrt{\det(MM^T)}$. Differentiating \eqref{Tn} and using the fact that $r^{(n)}(0)=1$ implies that $\dot{\bT}_n^{(\ell)}(z) \sim \mathscr{N}_{\ell \times n}(0, \Id_{\ell}\otimes \Id_n)$ and is stochastically independent of $\bT_n^{(\ell)}(z)$ for every $z \in \T$.
We furthermore adopt the notation 
\begin{gather}\label{rnnor}
{r}^{(n)}_{j,j'}(z) := \frac{\partial^2}{\partial z_j \partial z_j' } r^{(n)}(z), \quad 
\tilde{r}^{(n)}_{j,j'}(z):=\left(\frac{E_n}{3}\right)^{-1} r^{(n)}_{j,j'}(z).
\end{gather}
The statement of our result is divided into three parts: (i) gives the expected total variation of vector-valued ARWs on the full torus, (ii) is an exact variance asymptotic and (iii) is a Central Limit Theorem in the high-energy regime for the normalised total variation  
\begin{gather}\label{nor}
\widehat{\mathbf{V}}(\bT_n^{(\ell)};\T):=\frac{ \mathbf{V}(\bT_n^{(\ell)};\T)-\E{\mathbf{V}(\bT_n^{(\ell)};\T)}}{\V{\mathbf{V}(\bT_n^{(\ell)};\T)}^{1/2}}.
\end{gather}
\begin{Thm}\label{ARW}
Let the above notation prevail. 
\begin{enumerate}[label=\rm{(\roman*)}]
\item (Expected total variation)
For every $n \in S_3$, we have 
\begin{gather}\label{mean}
\E{\mathbf{V}(\bT_n^{(\ell)};\T)} = 
\left(\frac{E_n}{3}\right)^{\ell/2}   2^{\ell/2}\frac{\Gamma_{\ell}(2)}{\Gamma_{\ell}(\frac{3}{2})}
\end{gather}
\item (Asymptotic variance) 
As $n\to \infty, \notcon{n}{0,4,7}{8}$, 
\begin{gather}\label{Var}
\V{\mathbf{V}(\bT_n^{(\ell)};\T)} =
\left(\frac{E_n}{3}\right)^{\ell} 2^{\ell}  \frac{\Gamma_{\ell}(2)^2}{\Gamma_{\ell}(\frac{3}{2})^2}
\frac{\ell}{2\Nn}\left(1  + O(n^{-1/28+o(1)})\right)
\end{gather}
\item (CLT) 
As $n\to \infty, \notcon{n}{0,4,7}{8}$, 
\begin{gather}\label{CLT}
\widehat{\mathbf{V}}(\bT_n^{(\ell)};\T) \Law   \mathscr{N}(0,1),
\end{gather}
where $\Law$ denotes convergence in distribution.
\end{enumerate}
\end{Thm}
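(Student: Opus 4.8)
\emph{Overall strategy and part (i).} The plan is to feed the total variation into the matrix‑Hermite apparatus: for every fixed $z\in\T$ the normalised Jacobian $\dot{\bT}_n^{(\ell)}(z)$ has law $\mathscr{N}_{\ell\times 3}(0,\Id_\ell\otimes\Id_3)$, so I would first apply Corollary \ref{CoeffId} with $n=3$ — in particular \eqref{coeff0} — to get $\E{\Phi(\dot{\bT}_n^{(\ell)}(z))}=2^{\ell/2}\Gamma_\ell(2)/\Gamma_\ell(\tfrac32)$; plugging into the representation \eqref{totdef}, taking expectations and exchanging $\E{\cdot}$ with $\int_\T$ (legitimate since $\Phi(\dot{\bT}_n^{(\ell)}(z))$ is dominated by a fixed polynomial in Gaussians and $\T$ has unit volume) gives \eqref{mean}. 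For the variance I would then verify that $\mathfrak f_\ell=\bT_n^{(\ell)}$, with its normalised gradient, satisfies the hypothesis \eqref{CovMatrix} of Proposition \ref{VarTV}: arguing row by row (the $\ell$ components are i.i.d.), stationarity of $T_n$ shows the cross‑covariance of two normalised gradients is the genuine function of $z-z'$ given by the matrix $R(w)$ with entries $R_{jj'}(w)=\tfrac{3}{n\Nn}\sum_{\lambda\in\Lambda_n}\lambda_j\lambda_{j'}\,e_\lambda(w)$ (equivalently $R(w)=-\tilde r^{(n)}(w)$ in the notation \eqref{rnnor}); this matrix is real symmetric and, being a cross‑covariance of standard Gaussian vectors, has operator norm $\le1$, so $(\Id_n-R(w)^2)^{1/2}$ is well defined and the Gaussian regression of $\dot{\bT}_n^{(\ell)}(z')$ on $\dot{\bT}_n^{(\ell)}(z)$ is exactly \eqref{CovMatrix}.

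\emph{Part (ii): the leading term.} Proposition \ref{VarTV} (which rests on Theorem \ref{OrtA}) then yields, after using translation invariance on $\T\times\T$,
\begin{gather*}
\V{\mathbf{V}(\bT_n^{(\ell)};\T)}=\Big(\tfrac{E_n}{3}\Big)^{\ell}\sum_{k\ge1}\sum_{\kappa\vdash k}a_\kappa\int_{\T}C_\kappa\big(R(w)^2\big)\,dw,\qquad a_\kappa:=\widehat{\Phi}(\kappa)^2\,4^{-k}\Big(\tfrac{3}{2}\Big)_\kappa^{-1}k!\,\frac{C_\kappa(\Id_\ell)}{C_\kappa(\Id_3)}\ge0 .
\end{gather*}
For $k=1$ one has $C_{(1)}(R(w)^2)=\Tr(R(w)^2)$, and Parseval together with the sign‑change invariance of $\Lambda_n$ (which gives $\sum_{\lambda\in\Lambda_n}\sum_{j,j'}\lambda_j^2\lambda_{j'}^2=\sum_{\lambda\in\Lambda_n}|\lambda|^4=n^2\Nn$) gives $\int_\T\Tr(R(w)^2)\,dw=9/\Nn$; combined with $\widehat\Phi((1))=2^{\ell/2}\Gamma_\ell(2)/\Gamma_\ell(\tfrac32)$, $C_{(1)}(\Id_\ell)=\ell$ and $C_{(1)}(\Id_3)=3$, the $k=1$ contribution is exactly $(E_n/3)^\ell 2^\ell\,\Gamma_\ell(2)^2\Gamma_\ell(\tfrac32)^{-2}\,\ell/(2\Nn)$, the displayed main term of \eqref{Var}.

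\emph{Part (ii): the remainder, and the main obstacle.} It then remains to show $\sum_{k\ge2}\sum_{\kappa\vdash k}a_\kappa\int_\T C_\kappa(R(w)^2)\,dw=O(\Nn^{-1}n^{-1/28+o(1)})$. Two soft facts reduce this to a single arithmetic estimate: by \eqref{defZon} and the positivity of zonal polynomials on positive‑semidefinite matrices, $0\le C_\kappa(S)\le(\Tr S)^k$ for $S\succeq0$; and by Proposition \ref{VarEx} applied to $\Phi$ together with $C_\kappa(\Id_3)\ge1$, $\sum_{k\ge1}\sum_{\kappa\vdash k}a_\kappa\le\V{\Phi(X)}<\infty$. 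Since $R(w)$ is a correlation matrix, $\Tr(R(w)^2)\le3$; splitting $\T$ according to whether $\Tr(R(w)^2)\le\tfrac12$ or not, bounding the tail set by Markov applied to $\Tr(R(w)^2)^2$ and bounding $\sum_{k\ge2}(\cdot)\le\Cov{\Phi(\dot{\bT}(z))}{\Phi(\dot{\bT}(z+w))}\le\V{\Phi(X)}$ there, everything reduces to
\begin{gather*}
\int_\T\Tr(R(w)^2)^2\,dw=\frac{81}{n^4\Nn^4}\sum_{\substack{\lambda_1+\lambda_2+\lambda_3+\lambda_4=0\\ \lambda_i\in\Lambda_n}}\scal{\lambda_1}{\lambda_2}^2\scal{\lambda_3}{\lambda_4}^2\ \ll\ \Nn^{-1}\,n^{-1/28+o(1)}.
\end{gather*}
This is the hard step: it is a bound on (weighted) length‑four correlations of lattice points on the sphere of radius $\sqrt n$, which on $\T=\mathbb T^3$ are far more delicate than on $\mathbb T^2$ (two‑spheres carry many lattice circles), and it is precisely here that the exponent $n^{-1/28}$ enters. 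I would import it from the spectral‑correlation estimates available for arithmetic random waves on the three‑torus (see \cite{DNPR16,Not20} and references therein), using in addition the lower bound $\Nn\gg n^{1/2-o(1)}$ valid when $\notcon{n}{0,4,7}{8}$. Granting this, $\sum_{k\ge2}\V{\mathbf{V}(\bT_n^{(\ell)};\T)[2k]}=O\big((E_n/3)^\ell\Nn^{-1}n^{-1/28+o(1)}\big)$, which is $O(n^{-1/28+o(1)})$ times the leading term, establishing \eqref{Var}.

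\emph{Part (iii): the CLT.} By (ii) the second‑chaos projection $\mathbf{V}(\bT_n^{(\ell)};\T)[2]$ carries a fraction $1-O(n^{-1/28+o(1)})$ of the total variance, so $\widehat{\mathbf V}(\bT_n^{(\ell)};\T)=\V{\mathbf{V}(\bT_n^{(\ell)};\T)[2]}^{-1/2}\mathbf{V}(\bT_n^{(\ell)};\T)[2]+o_{L^2(\Prob)}(1)$, and by Slutsky's lemma it suffices to prove a CLT for the normalised second‑chaos term. Using $H_{(1)}^{(\ell,3)}(X)=\tfrac16\sum_{i=1}^\ell\sum_{j=1}^3(X_{ij}^2-1)$ from \eqref{HerPol} and $\int_\T e_\lambda=\ind{\lambda=0}$, a direct computation collapses $\int_\T H_{(1)}^{(\ell,3)}(\dot{\bT}_n^{(\ell)}(z))\,dz$ to $\tfrac{1}{\Nn}\sum_{i=1}^\ell\sum_{\lambda\in\Lambda_n^+}(|a_\lambda^{(i)}|^2-1)$, where $\Lambda_n^+$ is a set of representatives of $\Lambda_n$ under $\lambda\mapsto-\lambda$ and $\{a_\lambda^{(i)}:i\in[\ell],\,\lambda\in\Lambda_n^+\}$ are i.i.d.\ standard complex Gaussians. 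Thus, up to the deterministic factor $(E_n/3)^{\ell/2}\widehat\Phi((1))$, $\mathbf{V}(\bT_n^{(\ell)};\T)[2]$ is a centred sum of $\ell\Nn/2$ i.i.d.\ copies of the unit‑variance, all‑moments‑finite variable $|a_\lambda|^2-1$; since $\Nn\to\infty$ along $\notcon{n}{0,4,7}{8}$, the classical Lyapunov CLT gives convergence of the normalised term to $\mathscr N(0,1)$, and combining with the reduction above yields \eqref{CLT}. The only genuinely arduous point in the whole argument is the fourth‑moment bound of part (ii); the rest is bookkeeping built on Proposition \ref{VarTV}, Theorem \ref{OrtA} and Corollary \ref{CoeffId}.
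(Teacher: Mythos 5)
Your proposal is correct and follows the same skeleton as the paper — expansion of $\mathbf{V}(\bT_n^{(\ell)};\T)$ in matrix-Hermite polynomials via Corollary \ref{CoeffId}, exact identification of the mean and of the second-chaos variance, negligibility of chaoses of order $\geq4$, and a classical CLT for the collapsed second chaos $\tfrac{1}{\sqrt{\Nn/2}}\sum_{\lambda\in\Lambda_n/_\sim}(|a_{k,\lambda}|^2-1)$ — but your treatment of the tail is a genuinely different and leaner route than the paper's. Where the paper proves Proposition \ref{Higher} by partitioning $\T\times\T$ into singular and non-singular pairs of cubes (following \cite{ORW08,DNPR16}), bounding the singular contribution through $\mathrm{Leb}(\mathcal{B}_Q)\ll\mathcal{R}_n(6)$ together with a crude Cauchy--Schwarz on cube variances, and the non-singular one through Gerschgorin plus the bound $|C_\kappa(S)|\leq x^k C_\kappa(\Id_m)$ for $\rho(S)<x$, you split the torus pointwise according to whether $\Tr(R(w)^2)\leq\tfrac12$, use $0\leq C_\kappa(S)\leq(\Tr S)^k$ on the good set and Chebyshev with $\int_\T\Tr(R(w)^2)^2\,dw$ on the bad set; this works here precisely because the total variation involves no Dirac mass, so the covariance kernel $\sum_{k\geq1}\sum_{\kappa\vdash k}a_\kappa C_\kappa(R(w)^2)$ is pointwise bounded by $\V{\Phi(X)}$ and no cube-level bookkeeping is needed — a genuine simplification specific to this functional. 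The arithmetic input you isolate, $\int_\T\Tr(R(w)^2)^2\,dw\ll\Nn^{-1}n^{-1/28+o(1)}$, is indeed available: bounding the weights $\scal{\lambda_1}{\lambda_2}^2\scal{\lambda_3}{\lambda_4}^2\leq n^4$ reduces it to the number of length-four spectral correlations, i.e.\ to $\mathcal{R}_n(4)\leq\sqrt{\mathcal{R}_n(2)\mathcal{R}_n(6)}\ll\Nn^{-5/3+o(1)}$ from \cite{BM17}, which is exactly the estimate the paper uses and is stronger than what you need (it gives a relative error $O(n^{-1/3+o(1)})$, comfortably inside $O(n^{-1/28+o(1)})$). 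Your exact Parseval evaluation $\int_\T\Tr(R(w)^2)\,dw=9/\Nn$ is also cleaner than the paper's detour through the approximate moment identity of \cite{Cam17}, whose $n^{-1/28}$ error in fact cancels upon summing over $j,j'$.

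Two small points you assert without proof: the uniform bound $C_\kappa(\Id_3)\geq1$ (needed so that $\sum_{k,\kappa}a_\kappa\leq\V{\Phi(X)}$); it is true for partitions of length at most $3$ (e.g.\ $C_{(k)}(\Id_3)=2k+1$) but deserves a line, and the paper's Gerschgorin-based bound sidesteps it. Also your sign convention $R(w)=-\tilde r^{(n)}(w)$ versus the paper's $R_n(w)=(\tilde r^{(n)}_{j,j'}(w))$ is immaterial since only $R^2$ enters, but should be stated consistently when verifying hypothesis \eqref{CovMatrix}.
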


We remark that \eqref{mean} and \eqref{Var} imply that the normalised total variation $\mathbf{V}(\bT_n^{(\ell)};\T)/E_n^{\ell}$ converges in probability to $\left(\frac{2}{3}\right)^{\ell/2}\frac{\Gamma_{\ell}(2)}{\Gamma_{\ell}(3/2)}$ as $n\to \infty,\notcon{n}{0,4,7}{8}$.

Our proof of Theorem \ref{ARW} is based on expanding the total variation in \eqref{totdef} into matrix-variate Hermite polynomials by means of Corollary \ref{CoeffId} (see also \eqref{chaos}). As we will prove, 
the high-energy distributional behaviour of the normalised total variation is entirely characterized by its projection on the second Wiener chaos, which explains the underlying Gaussian fluctuations. 
 In order to prove the negligibility of higer-order Wiener chaoses with respect to the second one, we rely on fine estimates for the second and sixth integral moments of $r^{(n)}$ derived in \cite{BM17}.

\subsubsection{Digression: Comparison with \cite{Not20}}\label{Nodal}
 In this section, we compare our findings with \cite{Not20}, where we study nodal volumes 
$L_n^{(\ell)}:=\mathcal{H}_{3-\ell}(\mathcal{Z}(\bT_n^{(\ell)}))$ 
of the nodal sets $\mathcal{Z}(\bT_n^{(\ell)})$ associated with $\bT_n^{(\ell)}$ in \eqref{MultiARW} (here, $\mathcal{H}_k$ denotes the $k$-dimensional Hausdorff measure, with $\mathcal{H}_0$ indicating counting measure). Such a work is in particular based on the asymptotic study of the fourth chaotic projection associated with  the Wiener-It\^{o} chaos expansion of $L_n^{(\ell)}$. Recall that, in view of the Co-Area formula, the nodal volume $L_n^{(\ell)}$ is defined $\Prob$-almost surely and in $L^2(\Prob)$ as 
\begin{gather}\label{VolZ}
L_{n}^{(\ell)}= \left(\frac{E_n}{3}\right)^{\ell/2}\int_{\T}
\delta_{(0,\ldots,0)}(\bT_n^{(\ell)}(z)) 
\times \Phi(\dot{\bT}_n^{(\ell)}(z)) dz ,
\end{gather}
where $\Phi$ is as in \eqref{totdef}, $\dot{\bT}_n^{(\ell)}$ is the normalised Jacobian matrix of $\bT_n^{(\ell)}$, and $\delta_{(0,\ldots,0)}(x):=\delta_0(x_1) \cdots \delta_0(x_{\ell}), x=(x_1,\ldots, x_{\ell})$ denotes the multiple Dirac mass at the origin. Using matrix-Hermite polynomials studied in the present article, the  chaotic projection of $L_n^{(\ell)}$ on the Wiener chaos of order $2q$ is obtained by means of Corollary \ref{CoeffId} as
\begin{eqnarray}
L_{n}^{(\ell)}[2q] &=& \left(\frac{E_n}{3}\right)^{\ell/2}\sum_{q_1+2q_2=2q} \int_{\T}
\sum_{|\alpha|=q_1} \frac{\tilde{\beta}_{\alpha}}{\alpha!} H_{\alpha}^{\otimes \ell}(\bT_n^{(\ell)}(z)) 
\times \sum_{\kappa \vdash q_2} \widehat{\Phi}(\kappa) H_{\kappa}^{(\ell,3)}(\dot{\bT}_n^{(\ell)}(z)) dz \notag\\
&=& \left(\frac{E_n}{3}\right)^{\ell/2}\sum_{q_1+2q_2=2q} 
\sum_{|\alpha|=q_1} \sum_{\kappa \vdash q_2}
\frac{\tilde{\beta}_{\alpha}}{\alpha!}\widehat{\Phi}(\kappa)  \int_{\T} H_{\alpha}^{\otimes \ell}(\bT_n^{(\ell)}(z)) 
H_{\kappa}^{(\ell,3)}(\dot{\bT}_n^{(\ell)}(z)) dz \label{c2q}
\end{eqnarray}
where for a multi-index $\alpha\in \N^{\ell}, \widetilde{\beta}_{\alpha}$ denote the projection coefficients of the Dirac mass and $\widehat{\Phi}(\kappa)$  can be computed from \eqref{ForId} (applied with $n=3$) 
\begin{gather}\label{PhiKappa}
\widehat{\Phi}(\kappa) 
= (-2)^{k}   \bigg(\frac{3}{2}\bigg)_{\kappa} 2^{\ell/2}   \cdot \frac{1}{k!}\frac{\Gamma_{\ell}(2)}{\Gamma_{\ell}(\frac{3}{2})} \sum_{s=0}^{k} \sum_{\sigma \vdash s}
{\kappa \choose \sigma} (-1)^s\frac{(2)_{\sigma}}{(\frac{3}{2})_{\sigma}}.
\end{gather}
Writing out the explicit values of the projection coefficients in \eqref{PhiKappa} for partitions $\kappa\in \{(2),(1,1)\}$ and using the expressions for matrix-variate Hermite polynomials in \eqref{HerPol}, we eventually recover the projection coefficients associated with $\Phi$ appearing in \cite[Proposition B.5]{Not20}. 
We remark that, unlike the Wiener chaos expansion of the generalized total variation in \eqref{chaos}, the presence of the multiple Dirac mass leads to an expression containing both, 
multivariate and matrix-variate Hermite polynomials. Specifying \eqref{c2q} to $q=2$ yields that the projection of $L_n^{(\ell)}$ on the fourth Wiener chaos can be written compactly as the sum of five terms 
\begin{eqnarray*}
L_n^{(\ell)}[4]=
\left(\frac{E_n}{3}\right)^{\ell/2} \left[S_1^{(\ell)}(n)+\ldots+S_5^{(\ell)}(n)\right], 
\end{eqnarray*}
where 
\begin{eqnarray*}
S_1^{(\ell)}(n)&:=& \frac{\beta_4}{4!}\widehat{\Phi}((0))\sum_{i\in[\ell]}  \int_{\T} 
H_4(T_n^{(i)}(z)) dz\\
S_2^{(\ell)}(n) &:=&\left(\frac{\beta_2 }{2! }\right)^2\widehat{\Phi}((0))\sum_{i< j\in [\ell]}  \int_{\T} H_2(T_n^{(i)}(z))H_2(T_n^{(j)}(z))dz\\
S_3^{(\ell)}(n) &:=&\frac{\beta_2}{2!}\widehat{\Phi}((1))\sum_{i\in [\ell]} \int_{\T}  H_2(T_n^{(i)}(z)) H^{(\ell,3)}_{(1)}(\dot{\bT}_n^{(\ell)}(z)) dz\\
S_4^{(\ell)}(n) &:=&\tilde{\beta}_0\widehat{\Phi}((2)) \int_{\T} H_{(2)}^{(\ell,3)}(\dot{\bT}_n^{(\ell)}(z)) dz\\
S_5^{(\ell)}(n) &:=&\tilde{\beta}_0\widehat{\Phi}((1,1)) \int_{\T} H_{(1,1)}^{(\ell,3)}(\dot{\bT}_n^{(\ell)}(z))dz.
\end{eqnarray*}
Such an expression should be compared with \cite[Eq.(3.23)]{Not20}.  
We remark that in the case $\ell=1$, the terms $S_2^{(1)}(n)$ and $S_5^{(1)}(n)$ disappear, 
since in this case, only matrix-Hermite polynomials associated with partitions $\kappa$ of length one contribute to the chaotic expansion of $L_n^{(\ell)}$. The rich combinatorial structure of matrix-Hermite polynomials allows us to deduce a number of interesting observations about variance estimates:
Exploiting Theorem \ref{OrtA} for the term $S_4^{(\ell)}(n)$ (and similarly for $S_5^{(\ell)}(n)$) yields
\begin{eqnarray*}
\V{S_4^{(\ell)}(n)} &=& \tilde{\beta}_0^2\widehat{\Phi}((2))^2 4^{-2}\bigg(\frac{3}{2}\bigg)_{(2)}^{-1} 2 ! \frac{C_{(2)}(\Id_{\ell})}{C_{(2)}(\Id_3)} 
\int_{\T\times \T}C_{(2)}(R_n(z-z')^2)dzdz'\\
&=&\tilde{\beta}_0^2\widehat{\Phi}((2))^2 4^{-2}\bigg(\frac{3}{2}\bigg)_{(2)}^{-1} 2 ! \frac{C_{(2)}(\Id_{\ell})}{C_{(2)}(\Id_3)} 
\int_{\T }C_{(2)}(R_n(z)^2)dz ,
\end{eqnarray*}
where the last identity follows by stationarity. Moreover since the terms $S_4^{(\ell)}(n)$ and $S_5^{(\ell)}(n)$ involve different partitions of the integer $2$, Theorem \ref{OrtA} implies that
the random variables $S_4(n)$ and $S_5(n)$ are orthogonal in $L^2(\Prob)$.
It should be remarked that $S_4^{(\ell)}(n)$ and $S_5^{(\ell)}(n)$ are however not orthogonal in $L^2(\Prob)$ to the remaining terms $S_p^{(\ell)}(n), p=1,2,3$, as can be seen for instance from
\begin{gather*}
\E{S_4^{(\ell)}(n) S_1^{(\ell)}(n)} = 
\left(\frac{\beta_4}{4!}\right)^2   \widehat{\Phi}((2))^2 \times\sum_{i\in[\ell]}  \int_{\T\times \T} \E{  
H_4(T_n^{(i)}(z)) H_{(2)}^{(\ell,3)}(\dot{\bT}_n^{(\ell)}(z'))  } dzdz',
\end{gather*}
involving covariances between univariate and matrix-variate Hermite polynomials. 
In order to deal with such expressions, one can 
expand matrix-Hermite polynomials into univariate Hermite polynomials and rely on the classical diagram formulae for the latter.

\section{Proofs of main results}\label{SecProofs}
\subsection{Proofs of Section \ref{SecWC}}
\subsubsection*{Proof of Theorem \ref{ThmWC}}
Since $F \in L^2(\mu_X) \subset L^2(\phi^{(\ell,n)})$, we can expand it in the two orthonormal systems $\mathbb{H}_{[\ell n]}$ and $\mathbb{H}_{[\ell \times n]}$ defined in \eqref{Hm} and \eqref{HerONB} respectively, yielding
\begin{gather}\label{WCEq}
F=\sum_{k \geq 0} \proj(F|\mathbf{C}_k^{\mathbf{X}}) = \sum_{k \geq 0} \sum_{\kappa \vdash k}
\widehat{F}(\kappa) 
H_{\kappa}^{(\ell,n)}.
\end{gather}
Using the representation of zonal polynomials in \eqref{HOM}, we write $C_{\kappa}(XX^T)$ as a homogeneous polynomial of degree $2k$ in the entries of $X=(X_{ij})$, that is 
\begin{gather*}
C_{\kappa}(XX^T) = \sum_{|\alpha|=2k} z_{\alpha}^{\kappa} \prod_{i=1}^{\ell} \prod_{j=1}^n X_{ij}^{\alpha_{ij}} ,
\end{gather*}
where $\alpha\in \N^{\ell \times n}$ is a multi-index such that $|\alpha|=2k$ and $z_{\alpha}^{\kappa}$ is an explicit constant depending on $\alpha$ and $\kappa$.
Using the above representation of zonal polynomials in the generalized Rodrigues formula \eqref{Rod}, it follows that 
\begin{eqnarray*}
H_{\kappa}^{(\ell,n)}(X) &=& 4^{-k} \left(\frac{n}{2}\right)_{\kappa}^{-1} [\phi^{(\ell,n)}(X)]^{-1} C_{\kappa}(\partial X\partial X^T) \phi^{(\ell,n)}(X) \\
&=& 4^{-k} \left(\frac{n}{2}\right)_{\kappa}^{-1} [\phi^{(\ell,n)}(X)]^{-1} \sum_{|\alpha|=2k} z_{\alpha}^{\kappa} \prod_{i=1}^{\ell} \prod_{j=1}^n \frac{\partial^{\alpha_{ij}}}{\partial X_{ij}^{\alpha_{ij}}} \phi^{(\ell,n)}(X) \\
&=& 4^{-k} \left(\frac{n}{2}\right)_{\kappa}^{-1}  \sum_{|\alpha|=2k} z_{\alpha}^{\kappa} \prod_{i=1}^{\ell} \prod_{j=1}^n [\phi(X_{ij})]^{-1} \frac{\partial^{\alpha_{ij}}}{\partial X_{ij}^{\alpha_{ij}}} \phi(X_{ij}).
\end{eqnarray*}
Then, using the classical Rodrigues formula for Hermite polynomials on the real line \eqref{UniRod} for every $i\in [\ell], j\in [n]$, we infer that 
\begin{gather*}
[\phi(X_{ij})]^{-1} \frac{\partial^{\alpha_{ij}}}{\partial X_{ij}^{\alpha_{ij}}} \phi(X_{ij}) = (-1)^{\alpha_{ij}} H_{\alpha_{ij}}(X_{ij}), 
\end{gather*} 
so that, using the fact that $|\alpha|=2k$,
\begin{eqnarray*}
H_{\kappa}^{(\ell,n)}(X) &=& 
4^{-k} \left(\frac{n}{2}\right)_{\kappa}^{-1}  \sum_{|\alpha|=2k} z_{\alpha}^{\kappa} \prod_{i=1}^{\ell} \prod_{j=1}^n (-1)^{\alpha_{ij}}H_{\alpha_{ij}}(X_{ij}) \\
&=& 4^{-k} \left(\frac{n}{2}\right)_{\kappa}^{-1}  \sum_{|\alpha|=2k} z_{\alpha}^{\kappa}  H_{\alpha}^{\otimes \ell n}(X_{11},\ldots, X_{\ell n}).
\end{eqnarray*}
The above expression yields the   expansion of $H_{\kappa}^{(\ell,n)}(X)$ into multivariate Hermite polynomials and implies in particular that $H_{\kappa}^{(\ell,n)}(X)$ is an element of the Wiener chaos of order $2k$ associated with the vector $\mathbf{X}=\mathrm{Vec}(X)$. The formula for the projection of $F$ onto $\mathbf{C}_{2k}^{\mathbf{X}}$ in \eqref{proj2k} then follows summing over all partitions of $k$. The fact that the projection of $F$ onto Wiener chaos of odd order is zero follows from the fact that the RHS of \eqref{WCEq} does not involve any multivariate Hermite polynomials of cumulative odd order since $H_{\kappa}^{(\ell,n)}(X) \in \mathbf{C}_{2k}^{\mathbf{X}}$. 

In order to prove formula \eqref{RAD}, we use the identity \eqref{RelHerLag} and subsequently apply the polar decomposition $X=R^{1/2}U$ according to \eqref{Pol}, yielding $
(dX) = \frac{\pi^{n\ell/2}}{\Gamma_{\ell}(\frac{n}{2})} \det(R)^{\frac{n-\ell-1}{2}}\nu(dR)\tilde{\mu}(dU)$, (see e.g.  \cite[Theorem 1.5.2]{Ch2012}).
Therefore, we have from \eqref{Coeff}  
\begin{eqnarray*}
&&\hat{F}(\kappa)= c(\kappa)^{-1}\int_{\R^{\ell\times n}} F(X) H_{\kappa}^{(\ell,n)}(X) \phi^{(\ell,n)}(X)(dX) \\
&=& c(\kappa)^{-1} \gamma_{\kappa} (2\pi)^{-n\ell/2} \int_{\R^{\ell\times n}} f_0(XX^T) L_{\kappa}^{(\frac{n-\ell-1}{2})}(2^{-1}XX^T) \etr{-2^{-1}XX^T}(dX)\\
&=& c(\kappa)^{-1} \gamma_{\kappa}
(2\pi)^{-n\ell/2}  \int_{O(n,\ell)} 
\int_{\mathcal{P}_{\ell}(\R)} f_0(R) L_{\kappa}^{(\frac{n-\ell-1}{2})}(2^{-1}R) \etr{-2^{-1}R} \frac{\pi^{n\ell/2}}{\Gamma_{\ell}(\frac{n}{2})} \det(R)^{\frac{n-\ell-1}{2}}\nu(dR)\tilde{\mu}(dU)\\
&=&c(\kappa)^{-1} \gamma_{\kappa}(2\pi)^{-n\ell/2}  
\int_{\mathcal{P}_{\ell}(\R)} f_0(R) L_{\kappa}^{(\frac{n-\ell-1}{2})}(2^{-1}R) \etr{-2^{-1}R} \frac{\pi^{n\ell/2}}{\Gamma_{\ell}(\frac{n}{2})} \det(R)^{\frac{n-\ell-1}{2}}\nu(dR) \\
&=& \frac{1}{2^{n\ell/2}\Gamma_{\ell}(\frac{n}{2})}\frac{(-2)^{k}}{k!C_{\kappa}(\Id_{\ell})}\int_{\mathcal{P}_{\ell}(\R)} f_0(R) L_{\kappa}^{(\frac{n-\ell-1}{2})}(2^{-1}R) \etr{-2^{-1}R}  \det(R)^{\frac{n-\ell-1}{2}}\nu(dR),
\end{eqnarray*}
where we used that $\tilde{\mu}$ is a probability measure on $O(n,\ell)$ and the definitions of $c(\kappa)$ and $\gamma_{\kappa}$ in \eqref{HerONB} and \eqref{RelHerLag}, respectively.
 This finishes the proof.

\subsubsection*{Proof of Proposition \ref{VarEx}}
By Theorem \ref{ThmWC}, the Wiener-It\^{o} chaos expansion of $F(X)$ is given by
\begin{gather}\label{Exp}
F(X) = \sum_{k\geq 0} \sum_{\kappa\vdash k} \widehat{F}(\kappa) H_{\kappa}^{(\ell,n)}(X), 
\end{gather}
where $\widehat{F}(\kappa)$ is as in \eqref{Coeff}.
Computing the $L^2(\Prob)$-norm on both sides of \eqref{Exp} and using the orthogonality relation \eqref{HerOrtho} then yields
\begin{gather*}
\E{F(X)^2} 
= \sum_{k \geq 0} \sum_{l \geq 0}\sum_{\kappa \vdash k}
\sum_{\sigma \vdash l}
\widehat{F}(\kappa)
\widehat{F}(\sigma)
\E{H_{\kappa}^{(\ell,n)}(X)
H_{\sigma}^{(\ell,n)}(X)} \\
= \sum_{k \geq 0}\sum_{\kappa \vdash k} 
\widehat{F}(\kappa)^2
4^{-k}\left(\frac{n}{2}\right)_{\kappa}^{-1} k !C_{\kappa}(\Id_{\ell})   
= \widehat{F}((0))^2
+ \sum_{k\geq 1} \sum_{\kappa\vdash k} 
\widehat{F}(\kappa)^2
4^{-k}\left(\frac{n}{2}\right)_{\kappa}^{-1} k!C_{\kappa}(\Id_{\ell})  .
\end{gather*}
Since $\widehat{F}((0))=\E{F(X)}$, we obtain the expansion for the variance of $F(X)$, 
\begin{eqnarray*}
\V{F(X)} &=& \sum_{k\geq1}\sum_{\kappa\vdash k} 
\widehat{F}(\kappa)^2
4^{-k}\left(\frac{n}{2}\right)_{\kappa}^{-1} k!C_{\kappa}(\Id_{\ell}) \\
&=& \sum_{k\geq 1}\sum_{\kappa \vdash k} 
\left[4^{-k}\left(\frac{n}{2}\right)_{\kappa}^{-1} k!C_{\kappa}(\Id_{\ell}) \right]^{-1} \E{F(X) H_{\kappa}^{(\ell,n)}(X)}^2 \\
&=& \sum_{k\geq 1}\sum_{\kappa \vdash k} 
\frac{4^{k}(\frac{n}{2})_{\kappa}}{k!C_{\kappa}(\Id_{\ell}) } 
\E{F(X) H_{\kappa}^{(\ell,n)}(X)}^2,
\end{eqnarray*}
where we used  \eqref{HerONB}.

\subsection{Proofs of Section \ref{SectGeom}} 
\subsubsection*{Polar decomposition of Gaussian rectangular matrices}
Let us assume that $X$ has the $\mathscr{N}_{\ell \times n}(0,\Id_{\ell} \otimes \Sigma)$ distribution with density function
$\phi_{\Sigma}^{(\ell,n)}(X)$ defined in \eqref{denSigma}, 
and write $X=R^{1/2}U$ for its polar decomposition according to \eqref{Pol}. 
In the following lemma, we compute the joint probability density function of the pair $(R,U)$.
\begin{Lem}
If $X \sim \mathscr{N}_{\ell \times n}(0,\Id_{\ell} \otimes \Sigma)$, the joint probability density of the pair $(R,U)$ is given by 
\begin{eqnarray}\label{denMU}
f_{(R,U)}(R,U) 
= \frac{1}{\Gamma_{\ell}(\frac{n}{2})}
\frac{1}{2^{n\ell/2}} \det(\Sigma)^{-\ell/2} \det(R)^{\frac{n-\ell-1}{2}}
\etr{ -2^{-1}U\Sigma^{-1}U^TR} . 
\end{eqnarray} 
\end{Lem}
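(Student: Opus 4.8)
The plan is to read off the joint law of $(R,U)$ directly from the density $\phi_{\Sigma}^{(\ell,n)}$ of $X$ in \eqref{denSigma} by a single change of variables, namely the polar factorization $X = R^{1/2}U$ of \eqref{Pol}. Since the rows of a Gaussian matrix are almost surely linearly independent, this factorization is well-defined $\Prob$-a.s.\ and realizes a diffeomorphism between the set of matrices in $\R^{\ell\times n}$ with linearly independent rows (a set of full Lebesgue measure) and $\mathcal{P}_{\ell}(\R)\times O(n,\ell)$. Pushing the probability measure $\phi_{\Sigma}^{(\ell,n)}(X)(dX)$ forward along it yields a probability density for $(R,U)$ with respect to $\nu(dR)\,\tilde{\mu}(dU)$, where $\nu$ is Lebesgue measure on $\mathcal{P}_{\ell}(\R)$ and $\tilde{\mu}$ is the Haar probability measure on $O(n,\ell)$; this is the reference measure implicit in the statement.

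First I would invoke the Jacobian of the polar decomposition already used in the proof of Theorem \ref{ThmWC} (see \cite[Theorem 1.5.2]{Ch2012}):
\begin{gather*}
(dX) = \frac{\pi^{n\ell/2}}{\Gamma_{\ell}(\tfrac{n}{2})}\,\det(R)^{\frac{n-\ell-1}{2}}\,\nu(dR)\,\tilde{\mu}(dU).
\end{gather*}
Substituting this together with $\phi_{\Sigma}^{(\ell,n)}(X) = (2\pi)^{-n\ell/2}\det(\Sigma)^{-\ell/2}\etr{-2^{-1}X\Sigma^{-1}X^T}$ into $\phi_{\Sigma}^{(\ell,n)}(X)(dX)$, and using $(2\pi)^{-n\ell/2}\pi^{n\ell/2} = 2^{-n\ell/2}$, gives
\begin{gather*}
f_{(R,U)}(R,U) = \frac{1}{\Gamma_{\ell}(\tfrac{n}{2})}\,\frac{1}{2^{n\ell/2}}\,\det(\Sigma)^{-\ell/2}\,\det(R)^{\frac{n-\ell-1}{2}}\,\etr{-2^{-1}X\Sigma^{-1}X^T}.
\end{gather*}
It then remains to express the exponent in terms of $R$ and $U$: since $X = R^{1/2}U$ and $R^{1/2}$ is symmetric, $X\Sigma^{-1}X^T = R^{1/2}U\Sigma^{-1}U^T R^{1/2}$, so by cyclic invariance of the trace $\Tr(X\Sigma^{-1}X^T) = \Tr(U\Sigma^{-1}U^T R)$ and hence $\etr{-2^{-1}X\Sigma^{-1}X^T} = \etr{-2^{-1}U\Sigma^{-1}U^TR}$. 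Plugging this in yields exactly \eqref{denMU}.

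The computation is routine; the only points requiring attention are the correct normalization of the Jacobian of the polar decomposition — in particular that the Stiefel-manifold factor carries the Haar \emph{probability} measure $\tilde{\mu}$, not the unnormalized measure $\mu$ (the two differing by $v(n,\ell) = 2^{\ell}\pi^{n\ell/2}/\Gamma_{\ell}(\tfrac{n}{2})$), consistently with the proof of Theorem \ref{ThmWC} — and the cyclicity of the trace. That \eqref{denMU} integrates to $1$ against $\nu(dR)\,\tilde{\mu}(dU)$ is then automatic from the change of variables being a measure-preserving diffeomorphism off a Lebesgue-null set; in the isotropic case $\Sigma = \Id_n$ one may also check it directly from \eqref{intzonal} applied with $\kappa=(0)$ and $A = \tfrac12\Id_{\ell}$, which gives $\int_{\mathcal{P}_{\ell}(\R)}\det(R)^{\frac{n-\ell-1}{2}}\etr{-2^{-1}R}\,\nu(dR) = \Gamma_{\ell}(\tfrac n2)\,2^{n\ell/2}$.
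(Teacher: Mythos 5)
Your proposal is correct and follows essentially the same route as the paper: apply the polar factorization $X=R^{1/2}U$ with the Jacobian $(dX)=\frac{\pi^{n\ell/2}}{\Gamma_{\ell}(n/2)}\det(R)^{\frac{n-\ell-1}{2}}\nu(dR)\tilde{\mu}(dU)$, substitute into $\phi_{\Sigma}^{(\ell,n)}(X)(dX)$, and use cyclicity of the trace to rewrite the exponent as $\etr{-2^{-1}U\Sigma^{-1}U^TR}$. The additional checks on the normalization of $\tilde{\mu}$ and on the density integrating to one are fine but not needed beyond what the paper does.
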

\begin{proof}
Applying the polar change of variable $X=R^{1/2}U$ gives $(dX) = \frac{\pi^{n\ell/2}}{\Gamma_{\ell}(\frac{n}{2})} 
\det(R)^{\frac{n-\ell-1}{2}} \nu(dR) \tilde{\mu}(dU)$ (see e.g. Theorem 1.5.2 \cite{Ch2012}),
so that 
\begin{eqnarray*}
\phi_{\Sigma}(X)(dX) &=&  \phi_{\Sigma}(R^{1/2}U) \frac{\pi^{n\ell/2}}{\Gamma_{\ell}(\frac{n}{2})} 
\det(R)^{\frac{n-\ell-1}{2}} \nu(dR) \tilde{\mu}(dU)\\
&=& \frac{1}{\Gamma_{\ell}(\frac{n}{2})}
\frac{1}{2^{n\ell/2}} \det(\Sigma)^{-\ell/2} \det(R)^{\frac{n-\ell-1}{2}}
\etr{ -2^{-1}U\Sigma^{-1}U^TR}\nu(dR) \tilde{\mu}(dU), 
\end{eqnarray*}
where we used that $\etr{-2^{-1}R^{1/2}U\Sigma^{-1}U^{T}R^{1/2}}=\etr{ -2^{-1}U\Sigma^{-1}U^TR}$.
\end{proof}
The following lemma (see \cite[Theorem 2.4.2]{Ch2012}) gives the marginal density functions of $R$ and $U$, respectively. These are obtained when integrating the joint density $f_{(R,U)}(R,U)$ with respect to $U$ and $R$, respectively.
\begin{Lem}[Theorem 2.4.2, \cite{Ch2012}]
Assume that $X \sim \mathscr{N}_{\ell \times n}(0,\Id_{\ell} \otimes \Sigma)$ and write $X = R^{1/2} \cdot U$. Then, the marginal density functions of $R$ and $U$ are respectively given by 
\begin{gather}\label{denM}
f_R(R)= \frac{1}{2^{n\ell/2} \Gamma_{\ell}(\frac{n}{2}) \det(\Sigma)^{\ell/2}}  \Hyp{0}{}{0}{}{-2^{-1}\Sigma^{-1},R} \det(R)^{\frac{n-\ell-1}{2}},
\end{gather}
where 
\begin{gather*}
\Hyp{p}{a_1,\ldots,a_p}{q}{b_1,\ldots,b_q}{S,T}= 
\sum_{k\geq0}\sum_{\kappa \vdash k} \frac{(a_1)_{\kappa}\cdots (a_p)_{\kappa}}{(b_1)_{\kappa}\cdots (b_q)_{\kappa}} \frac{C_{\kappa}(S)C_{\kappa}(T)}{k!C_{\kappa}(\Id_{\ell})}
\end{gather*}
denotes the hypergeometric function with two matrix arguments (see e.g. \cite[Appendix A.6]{Ch2012})
and
\begin{gather}\label{denU}
f_U(U)= \det(\Sigma)^{-\ell/2}\det(U\Sigma^{-1}U^T)^{-n/2} .
\end{gather}
\end{Lem}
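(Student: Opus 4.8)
The plan is to derive both marginal densities directly from the joint density \eqref{denMU}, which is a density with respect to the product measure $\nu(dR)\,\tilde{\mu}(dU)$ on $\mathcal{P}_{\ell}(\R)\times O(n,\ell)$. Hence $f_R(R)=\int_{O(n,\ell)}f_{(R,U)}(R,U)\,\tilde{\mu}(dU)$ and $f_U(U)=\int_{\mathcal{P}_{\ell}(\R)}f_{(R,U)}(R,U)\,\nu(dR)$, and in each case one factors out of \eqref{denMU} everything that does not depend on the integration variable and is left with a single matrix integral.

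For $f_R$, after factoring one must compute $I(R,\Sigma):=\int_{O(n,\ell)}\etr{-2^{-1}U\Sigma^{-1}U^{T}R}\,\tilde{\mu}(dU)$. I would expand the exponential using $\Tr(S)^{k}=\sum_{\kappa\vdash k}C_{\kappa}(S)$ (see \eqref{defZon}), so that $\etr{S}=\sum_{k\geq0}(k!)^{-1}\sum_{\kappa\vdash k}C_{\kappa}(S)$, and interchange the sum with the integral; this is legitimate since $O(n,\ell)$ is compact and, for $M:=U\Sigma^{-1}U^{T}R$, the matrix $M$ has positive eigenvalues bounded uniformly in $U$, whence $\sum_{\kappa\vdash k}|C_{\kappa}(-2^{-1}M)|=2^{-k}(\Tr M)^{k}$ is summable. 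It then remains to evaluate $\int_{O(n,\ell)}C_{\kappa}(R\,U\Sigma^{-1}U^{T})\,\tilde{\mu}(dU)$. Writing $U_{0}=[\Id_{\ell}\;\,0]\in\R^{\ell\times n}$, using the standard fact that the first $\ell$ rows of a Haar-distributed element of $O(n)$ form a Haar-distributed point of $O(n,\ell)$, and using the cyclic invariance of zonal polynomials (a rectangular extension of \eqref{Inv}) to pass from $C_{\kappa}(R\,U_{0}Q\Sigma^{-1}Q^{T}U_{0}^{T})$ to $C_{\kappa}(Q\Sigma^{-1}Q^{T}\widetilde{R})$ with $\widetilde{R}:=U_{0}^{T}RU_{0}$ the $n\times n$ matrix carrying $R$ in its upper-left block, one reduces to the classical James integral $\int_{O(n)}C_{\kappa}(Q\Sigma^{-1}Q^{T}\widetilde{R})\,\tilde{\mu}(dQ)=C_{\kappa}(\Sigma^{-1})C_{\kappa}(\widetilde{R})/C_{\kappa}(\Id_{n})$. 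Since the eigenvalues of $\widetilde{R}$ are those of $R$ together with $n-\ell$ zeros, the stability of zonal polynomials under adjoining zero eigenvalues yields $C_{\kappa}(\widetilde{R})=C_{\kappa}(R)$. Summing the contributions $(-2)^{-k}C_{\kappa}(\Sigma^{-1})C_{\kappa}(R)/C_{\kappa}(\Id_{n})$ and recognising the zonal series defining the two–matrix hypergeometric function gives $I(R,\Sigma)={}_{0}F_{0}(-2^{-1}\Sigma^{-1},R)$; collecting the prefactors of \eqref{denMU} then produces the asserted formula for $f_R$.

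For $f_U$, after factoring out $\Gamma_{\ell}(n/2)^{-1}2^{-n\ell/2}\det(\Sigma)^{-\ell/2}$ one is left with $\int_{\mathcal{P}_{\ell}(\R)}\det(R)^{(n-\ell-1)/2}\etr{-2^{-1}U\Sigma^{-1}U^{T}R}\,\nu(dR)$, which is the instance of the matrix–Gamma integral \eqref{intzonal} with $\kappa=(0)$ (so $C_{(0)}\equiv1$), with $A=2^{-1}U\Sigma^{-1}U^{T}$ — positive definite because $U$ has full row rank $\ell$ and $\Sigma^{-1}\in\mathcal{P}_{n}(\R)$ — and with $t=n/2$ (so that $t-(\ell+1)/2=(n-\ell-1)/2$), which satisfies $\Re(t)>(\ell-1)/2$ since $\ell\leq n$. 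It therefore equals $\Gamma_{\ell}(n/2)\det(2^{-1}U\Sigma^{-1}U^{T})^{-n/2}=\Gamma_{\ell}(n/2)\,2^{n\ell/2}\det(U\Sigma^{-1}U^{T})^{-n/2}$, using $\det(2^{-1}M)=2^{-\ell}\det M$ for $M\in\R^{\ell\times\ell}$. Multiplying back, the powers of $2$ and the factors $\Gamma_{\ell}(n/2)$ cancel, leaving $f_U(U)=\det(\Sigma)^{-\ell/2}\det(U\Sigma^{-1}U^{T})^{-n/2}$.

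The $f_U$ part is a routine Gamma evaluation; the real work is the identification $I(R,\Sigma)={}_{0}F_{0}(-2^{-1}\Sigma^{-1},R)$, i.e.\ the Stiefel–manifold integration formula $\int_{O(n,\ell)}\etr{AUBU^{T}}\,\tilde{\mu}(dU)={}_{0}F_{0}(A,B)$ for symmetric $A\in\R^{\ell\times\ell}$ and $B\in\R^{n\times n}$. Two points require care there: first, the normalising constant in the zonal expansion of the integral is $C_{\kappa}(\Id_{n})^{-1}$ (the two–matrix ${}_{0}F_{0}$ being normalised by the larger of the two matrix sizes), which must be used consistently; second, the reduction of the $n$–variable zonal polynomial $C_{\kappa}(\widetilde{R})$ to the $\ell$–variable one $C_{\kappa}(R)$ relies on the stability of zonal polynomials. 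Both facts can instead be taken verbatim from the literature on integration over Stiefel manifolds (e.g.\ \cite{Ch2012,Muir}), in which case the whole argument collapses to the two short computations above.
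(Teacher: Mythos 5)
Your derivation is correct, but note that the paper does not prove this lemma at all: it is imported verbatim as Theorem 2.4.2 of \cite{Ch2012}, so the comparison here is with the textbook argument rather than with anything in the text. What you give is essentially that standard argument — integrate the joint polar density \eqref{denMU} in each variable, evaluate the radial integral for $f_U$ by the matrix Gamma formula \eqref{intzonal} with $\kappa=(0)$, $t=n/2$ (your bookkeeping of the powers of $2$ and of $\Gamma_\ell(n/2)$ is right), and evaluate the angular integral for $f_R$ by expanding $\etr{\cdot}$ into zonal polynomials via \eqref{defZon} and applying the James splitting integral over $O(n,\ell)$, using the rank/stability properties of zonal polynomials to pass between the $\ell\times\ell$ and $n\times n$ arguments; the Fubini justification you sketch (nonnegativity of $C_\kappa$ on positive semidefinite matrices plus compactness of the Stiefel manifold) is adequate. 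One point in your write-up deserves emphasis rather than apology: the normalisation $C_\kappa(\Id_n)$ you obtain is indeed the correct one for the two-matrix ${}_0F_0$ with an $n\times n$ and an $\ell\times\ell$ argument (a rank-one check, $\int_{O(n,\ell)}\Tr(AUBU^T)\tilde{\mu}(dU)=\Tr(A)\Tr(B)/n$, already forces it), whereas the lemma as printed in the paper carries $C_\kappa(\Id_\ell)$ in the denominator; the two readings agree only when $\Sigma=\Id_n$ (which is why the paper's Wishart reduction, written with first argument $-2^{-1}\Id_\ell$, still comes out right), so your version is the one consistent with Chikuse's convention for general $\Sigma$, and you have in effect caught a slip in the transcription of the statement.
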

The density function of $U$ in \eqref{denU} is referred to as the \textit{matrix angular central distribution with parameter} $\Sigma$ on $O(n,\ell)$. We also point out that, when $\Sigma=\Id_n$, the matrix $R$ follows the \textit{Wishart distribution} with density function  
\begin{gather*}
\frac{1}{2^{n\ell/2} \Gamma_{\ell}(\frac{n}{2})} \Hyp{0}{}{0}{}{-2^{-1}\Id_{\ell},R} \det(R)^{\frac{n-\ell-1}{2}}  
= \frac{1}{2^{n\ell/2} \Gamma_{\ell}(\frac{n}{2})} 
\etr{-2^{-1}R}\det(R)^{\frac{n-\ell-1}{2}}
\end{gather*}
and the matrix angular central distribution of $U$ reduces to the uniform distribution on $O(n,\ell)$. Moreover, it follows from \eqref{denMU}, that in this case, $R$ and $U$ are independent.

\medskip
\noindent Combining \eqref{denMU} with \eqref{denU}, we obtain the conditional probability density  of $R$ given $U$:
\begin{gather}\label{MgivenU}
\frac{f_{(R,U)}(R,U)}{f_U(U)}
= \frac{1}{\Gamma_{\ell}(\frac{n}{2})}
\frac{1}{2^{n\ell/2}}  \det(R)^{\frac{n-\ell-1}{2}}
\etr{ -2^{-1}U\Sigma^{-1}U^TR}  \det(U\Sigma^{-1}U^T)^{ n/2}  .
\end{gather} 
In the forthcoming sections, whenever $Z$ is a random variable, we often write 
$\EX{Z}{\cdot} $ to indicate mathematical expectation with respect to the law of $Z$.

\subsubsection*{Proof of Proposition \ref{DefSigma}}
We observe that the following relation holds
\begin{gather}\label{RelPhi}
\phi^{(\ell,n)}_{\Sigma}(X) = \det(\Sigma)^{-\ell/2} \phi^{(\ell,n)}(X\Sigma^{-1/2}), 
\end{gather}
where $\phi^{(\ell,n)}$ denotes the standard Gaussian density on $\R^{\ell \times n}$.
From the definition \eqref{HerSigma} and the relation \eqref{RelPhi}, it hence follows that 
\begin{gather*}
\int_{\R^{\ell \times n}} H^{(\ell,n)}_{\kappa}(X;\Sigma)
H^{(\ell,n)}_{\sigma}(X;\Sigma)\phi^{(\ell,n)}_{\Sigma}(X)(dX) \\
= \det(\Sigma)^{\ell k+\ell l-\ell/2}\int_{\R^{\ell\times n} }
H_{\kappa}^{(\ell,n)}(X\Sigma^{-1/2})
H_{\sigma}^{(\ell,n)}(X\Sigma^{-1/2}) \phi^{(\ell,n)}(X\Sigma^{-1/2}) (dX) .
\end{gather*}
Applying the change of variables $Y=X\Sigma^{-1/2}$, we have $(dY) =  \det(\Sigma^{-1/2})^{\ell}(dX) = \det(\Sigma)^{-\ell/2} (dX)$
(see e.g.  \cite[Theorem 2.1.5]{Muir}),
i.e. $(dX) = \det(\Sigma)^{\ell/2} (dY)$, so that the integral above becomes 
\begin{eqnarray*}
&&\det(\Sigma)^{\ell k+\ell l-\ell/2}\int_{\R^{\ell \times n}} H_{\kappa}^{(\ell,n)}(X\Sigma^{-1/2})
H_{\sigma}^{(\ell,n)}(X\Sigma^{-1/2}) \phi^{(\ell,n)}(X\Sigma^{-1/2}) (dX) \\
&=&  \det(\Sigma)^{\ell k+\ell l }\int_{\R^{\ell \times n}} H_{\kappa}^{(\ell,n)}(Y)
H_{\sigma}^{(\ell,n)}(Y) \phi^{(\ell,n)}(Y) (dY) \\
&=& \ind{\kappa=\sigma} \det(\Sigma)^{2\ell k}4^{-k}\bigg(\frac{n}{2}\bigg)_{\kappa}^{-1} k !C_{\kappa}(\Id_{\ell}) ,
\end{eqnarray*}
where we used \eqref{HerOrtho}. This proves the statement.

\subsubsection*{Proof of Theorem \ref{CoeffSigma}}
The proof of Theorem \ref{CoeffSigma} is based on the following key identity. 
\begin{Lem}
Let $A\in \C^{\ell \times \ell}$ be a  complex symmetric matrix with positive real part, $B\in \C^{\ell \times \ell}$  a  complex symmetric matrix and $t \in \C$  such that $\Re(t)>(\ell-1)/2$. Then, we have 
\begin{gather}\label{LaplaceLag} 
\int_{\mathcal{P}_{\ell}(\R)}
\etr{-AR}\det(R)^{t-\frac{\ell+1}{2}}L^{(\gamma)}_{\kappa}(RB) \nu(dR) \\
= \left(\gamma+\frac{\ell+1}{2}\right)_{\kappa} C_{\kappa}(\Id_{\ell}) 
\Gamma_{\ell}(t) \det(A)^{-t}
\sum_{s=0}^{k} \sum_{ \sigma \vdash s}
{\kappa \choose \sigma} \frac{(-1)^s}{(\gamma+\frac{\ell+1}{2})_{\sigma}}\frac{1}{C_{\sigma}(\Id_{\ell})} 
(t)_{\sigma} C_{\sigma}(BA^{-1}).  \notag
\end{gather}
\end{Lem}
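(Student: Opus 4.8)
The plan is to expand the generalized Laguerre polynomial $L_{\kappa}^{(\gamma)}(RB)$ via its defining series \eqref{LagZon}, interchange the (finite) sum with the integral, and then evaluate each resulting integral by means of the known zonal integral formula \eqref{intzonal}. Concretely, I would first write
\begin{gather*}
L_{\kappa}^{(\gamma)}(RB) = \left(\gamma+\frac{\ell+1}{2}\right)_{\kappa} C_{\kappa}(\Id_{\ell})
\sum_{s=0}^{k} \sum_{\sigma \vdash s}
{\kappa \choose \sigma} \frac{(-1)^s}{(\gamma+\frac{\ell+1}{2})_{\sigma}}\frac{C_{\sigma}(RB)}{C_{\sigma}(\Id_{\ell})},
\end{gather*}
using the invariance property $C_{\sigma}(RB)=C_{\sigma}((RB))$ (note $C_\sigma$ depends only on eigenvalues, so $C_{\sigma}(RB)=C_\sigma(R^{1/2}BR^{1/2})$ by \eqref{Inv}, which is the form to which \eqref{intzonal} applies after relabeling). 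Since the double sum over $s\le k$ and $\sigma\vdash s$ is finite, pulling it outside the integral is legitimate provided each integral converges, which is guaranteed by $\Re(t)>(\ell-1)/2$ together with the assumption $\Re(A)>0$.

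Next, for each fixed $\sigma\vdash s$ I would apply \eqref{intzonal} with the substitution $t \mapsto t$ (matching exponent $\det(R)^{t-\frac{\ell+1}{2}}$), the matrix $A$ in the role of $A$, and $B$ in the role of $B$, obtaining
\begin{gather*}
\int_{\mathcal{P}_{\ell}(\R)}
\etr{-AR}\det(R)^{t-\frac{\ell+1}{2}}C_{\sigma}(RB) \nu(dR)
= (t)_{\sigma} \Gamma_{\ell}(t) \det(A)^{-t}C_{\sigma}(BA^{-1}).
\end{gather*}
Substituting this back into the expanded sum and factoring out the terms independent of $\sigma$ (namely $\left(\gamma+\frac{\ell+1}{2}\right)_{\kappa} C_{\kappa}(\Id_{\ell})\Gamma_{\ell}(t)\det(A)^{-t}$) yields exactly the right-hand side of \eqref{LaplaceLag}. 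This is essentially a bookkeeping computation once the two ingredients \eqref{LagZon} and \eqref{intzonal} are in place.

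The only genuine subtlety — and the step I would treat most carefully — is the applicability of \eqref{intzonal} when $B$ is a general complex symmetric matrix rather than a positive-definite real one, and the matching of hypotheses: \eqref{intzonal} as stated requires $A$ complex symmetric with positive real part, $B$ complex symmetric (no definiteness needed on $B$), and $\Re(t)>(\ell-1)/2$, all of which are exactly the hypotheses of the present lemma, so there is in fact nothing to check beyond confirming that the convergence of the zonal series defining $C_\sigma(RB)$ inside the integral is not an issue — but $C_\sigma$ is a polynomial, so this is immediate. A secondary point is ensuring that $\left(\gamma+\frac{\ell+1}{2}\right)_{\sigma}\neq 0$ for the $\sigma$ appearing in the sum, which holds since $\gamma>-1$ forces $\gamma+\frac{\ell+1}{2}>\frac{\ell-1}{2}\ge 0$ and the generalized Pochhammer symbol \eqref{PochGen} of a positive argument is positive. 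Thus the main "obstacle" is really just verifying that no hypothesis of \eqref{intzonal} is violated when we specialize; the algebra itself is routine.
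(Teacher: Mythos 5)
Your proposal is correct and matches the paper's own proof, which likewise observes that by linearity one simply expands $L_{\kappa}^{(\gamma)}$ via \eqref{LagZon} and applies \eqref{intzonal} to each zonal polynomial $C_{\sigma}$ in the expansion. The extra care you take about the hypotheses of \eqref{intzonal} is harmless but not needed beyond what the paper states, since they coincide exactly with those of the lemma.
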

\begin{proof}
This identity follows directly from the definition of Laguerre polynomials in \eqref{LagZon}: indeed by linearity, it suffices to apply
 relation \eqref{intzonal} on each zonal polynomial $C_{\sigma}$ appearing in the expansion of $L_{\kappa}^{(\gamma)}$.
\end{proof}

We are now in position to prove Theorem \ref{CoeffSigma}. 
\begin{proof}[Proof of Theorem \ref{CoeffSigma}]
The fact that the random variable $F(X)= \det(XX^T)^{1/2}$ is an element of $L^2( \mu_X)$ follows from the following observation: Denoting by $s_1,\ldots, s_{\ell}$ the eigenvalues of $XX^T$, we have that 
\begin{gather*}
\det(XX^T) = \prod_{i=1}^{\ell} s_i  
= \frac{1}{\ell!} \sum_{i_1\neq \ldots \neq i_{\ell}\in[\ell]} s_{i_1} \cdots s_{i_{\ell}} 
=  \int_{\R}\cdots \int_{\R}
 \frac{1}{\ell!} \ind{t_i \neq t_j, \ \forall i\neq j \in [\ell] }
 \mu_X(dt_1)\ldots\mu_X(dt_{\ell}).
\end{gather*}
This justifies the decomposition into matrix-variate Hermite polynomials of $F$.  
We now prove formula \eqref{For1}. Using the definition of the polynomials $H_{\kappa}^{(\ell,n)}(X;\Sigma)$ in \eqref{HerSigma} and the relation \eqref{RelHerLag},
we obtain from \eqref{Fourier}
\begin{eqnarray*}
\widehat{F}(\kappa; \Sigma) &=& 
c(\kappa;\Sigma)^{-1}
\EX{X}{F(X) H^{(\ell,n)}_{\kappa}(X;\Sigma)} \\
&=& c(\kappa;\Sigma)^{-1} \det(\Sigma)^{\ell k} \gamma_{\kappa}
\EX{X}{F(X) L_{\kappa}^{(\frac{n-\ell-1}{2})}(2^{-1}X\Sigma^{-1}X^T) }, \quad 
X \sim \mathscr{N}_{\ell\times n}(0,\Id_{\ell}\otimes \Sigma).
\end{eqnarray*}
Applying the polar decomposition $X= R^{1/2}U$ and noting that $F(R^{1/2}U)=\det(R)^{1/2}$, we have 
\begin{eqnarray*}
\widehat{F}(\kappa; \Sigma)
&=& c(\kappa;\Sigma)^{-1} \det(\Sigma)^{\ell k} \gamma_{\kappa}
\EX{(R,U)}{ \det(R)^{1/2} L_{\kappa}^{(\frac{n-\ell-1}{2})}(2^{-1}R^{1/2}U\Sigma^{-1}U^TR^{1/2}) } \\
&=& c(\kappa;\Sigma)^{-1} \det(\Sigma)^{\ell k} \gamma_{\kappa} 
\EX{(R,U)}{ \det(R)^{1/2} L_{\kappa}^{(\frac{n-\ell-1}{2})}(2^{-1}U\Sigma^{-1}U^TR) } 
\end{eqnarray*}
where in the last line we used the fact that 
$L_{\kappa}^{(\frac{n-\ell-1}{2})}(2^{-1}R^{1/2}U\Sigma^{-1}U^TR^{1/2}) 
=L_{\kappa}^{(\frac{n-\ell-1}{2})}(2^{-1}U\Sigma^{-1}U^TR)$
in view of the permutation invariance property \eqref{Inv} of zonal polynomials appearing in the definition of matrix-variate Laguerre polynomials \eqref{LagZon}.
By conditioning on $U$, we can rewrite the above expectation as
\begin{eqnarray*}
&&\EX{(R,U)}{ \det(R)^{1/2} L_{\kappa}^{(\frac{n-\ell-1}{2})}(2^{-1}U\Sigma^{-1}U^TR) } \\
&=&\EX{U}{\EX{R|U}{\det(R)^{1/2} L_{\kappa}^{(\frac{n-\ell-1}{2})}(2^{-1}U\Sigma^{-1}U^TR)}}  
=: \EX{U}{Z_{\kappa}(U;\Sigma)},
\end{eqnarray*}
where 
\begin{gather*}
Z_{\kappa}(U;\Sigma):=\EX{R|U}{\det(R)^{1/2} L_{\kappa}^{(\frac{n-\ell-1}{2})}(2^{-1}U\Sigma^{-1}U^TR)},
\end{gather*}
so that 
\begin{gather}\label{FSigma}
\widehat{F}(\kappa; \Sigma)
= c(\kappa;\Sigma)^{-1} \det(\Sigma)^{\ell k} \gamma_{\kappa}
\EX{U}{Z_{\kappa}(U;\Sigma)}.
\end{gather}
We start by computing $Z_{\kappa}(U;\Sigma)$. Using the conditional probability density of $R$ given $U$ in \eqref{MgivenU}, we have 
\begin{eqnarray}
 Z_{\kappa}(U;\Sigma)&=& \int_{\mathcal{P_{\ell}(\R)}} \det(R)^{1/2} L_{\kappa}^{(\frac{n-\ell-1}{2})}(2^{-1}U\Sigma^{-1}U^TR) 
\frac{f_{(R,U)}(R,U)}{f_U(U)} \nu(dR) \notag \\
&=& \int_{\mathcal{P_{\ell}(\R)}} \det(R)^{1/2} L_{\kappa}^{(\frac{n-\ell-1}{2})}(2^{-1}U\Sigma^{-1}U^TR) 
\frac{1}{\Gamma_{\ell}(\frac{n}{2})}
\frac{1}{2^{n\ell/2}}  \det(R)^{\frac{n-\ell-1}{2}}
\etr{ -2^{-1}U\Sigma^{-1}U^TR}  \notag\\
&& \hspace{8cm}\times \det(U\Sigma^{-1}U^T)^{ n/2}   \nu(dR) \notag\\
&=&\int_{\mathcal{P_{\ell}(\R)}} \det(R)^{\frac{n-\ell}{2}} L_{\kappa}^{(\frac{n-\ell-1}{2})}(2^{-1}U\Sigma^{-1}U^TR) 
\etr{ -2^{-1}U\Sigma^{-1}U^TR}  \nu(dR) \notag\\
&&\hspace{8cm}\times \frac{1}{\Gamma_{\ell}(\frac{n}{2})}
\frac{1}{2^{n\ell/2}} \det(U\Sigma^{-1}U^T)^{ n/2} \notag \\
&=:&\frac{1}{\Gamma_{\ell}(\frac{n}{2})}
\frac{1}{2^{n\ell/2}} \det(U\Sigma^{-1}U^T)^{ n/2} \cdot I_{\kappa}(U;\Sigma), \label{ZUexp}
\end{eqnarray}
where 
\begin{gather}\label{I}
I_{\kappa}(U;\Sigma)=\int_{\mathcal{P_{\ell}(\R)}} \det(R)^{\frac{n-\ell}{2}} L_{\kappa}^{(\frac{n-\ell-1}{2})}(2^{-1}U\Sigma^{-1}U^TR) 
\etr{ -2^{-1}U\Sigma^{-1}U^TR}  \nu(dR).
\end{gather}
Exploiting identity \eqref{LaplaceLag} with $\gamma=(n-\ell-1)/2, t=(n+1)/2$ and $A=B=2^{-1}U\Sigma^{-1}U^T$ yields  
\begin{eqnarray*}
I_{\kappa}(U;\Sigma)
&=&\bigg(\frac{n}{2}\bigg)_{\kappa} C_{\kappa}(\Id_{\ell})
\sum_{s=0}^{k} \sum_{\sigma \vdash s}
{\kappa \choose \sigma} \frac{(-1)^s}{(\frac{n}{2})_{\sigma}}\frac{1}{C_{\sigma}(\Id_{\ell})}
\bigg(\frac{n+1}{2}\bigg)_{\sigma} \Gamma_{\ell}\bigg(\frac{n+1}{2}\bigg) \notag \\
&&\hspace{5cm}\times \det(2^{-1}U\Sigma^{-1}U^T)^{-(n+1)/2}C_{\sigma}(\Id_{\ell}) \\
&=& \det(2^{-1}U\Sigma^{-1}U^T)^{-(n+1)/2} 
\bigg(\frac{n}{2}\bigg)_{\kappa} C_{\kappa}(\Id_{\ell})
\Gamma_{\ell}\bigg(\frac{n+1}{2}\bigg)
\sum_{s=0}^{k} \sum_{\sigma \vdash s}
{\kappa \choose \sigma} (-1)^s\frac{(\frac{n+1}{2})_{\sigma}}{(\frac{n}{2})_{\sigma}}  \\
&=&\det(U\Sigma^{-1}U^T)^{-(n+1)/2} 
2^{\ell(n+1)/2}\bigg(\frac{n}{2}\bigg)_{\kappa} C_{\kappa}(\Id_{\ell})
\Gamma_{\ell}\bigg(\frac{n+1}{2}\bigg)
\sum_{s=0}^{k} \sum_{\sigma \vdash s}
{\kappa \choose \sigma} (-1)^s\frac{(\frac{n+1}{2})_{\sigma}}{(\frac{n}{2})_{\sigma}} \\
&=& \det(U\Sigma^{-1}U^T)^{-(n+1)/2} \cdot d_{\kappa},
\end{eqnarray*}
where 
\begin{gather}\label{dk}
d_{\kappa}:=
2^{\ell(n+1)/2}\bigg(\frac{n}{2}\bigg)_{\kappa} C_{\kappa}(\Id_{\ell})
\Gamma_{\ell}\bigg(\frac{n+1}{2}\bigg)
\sum_{s=0}^{k} \sum_{\sigma \vdash s}
{\kappa \choose \sigma} (-1)^s\frac{(\frac{n+1}{2})_{\sigma}}{(\frac{n}{2})_{\sigma}}.
\end{gather}
Replacing this expression into the RHS of \eqref{ZUexp} eventually gives
\begin{eqnarray*}
Z_{\kappa}(U;\Sigma) 
&=&\frac{1}{\Gamma_{\ell}(\frac{n}{2})}
\frac{1}{2^{n\ell/2}} \det(U\Sigma^{-1}U^T)^{ n/2} \det(U\Sigma^{-1}U^T)^{-(n+1)/2} \cdot d_{\kappa} \\
&=&d_{\kappa} \frac{1}{\Gamma_{\ell}(\frac{n}{2})}
\frac{1}{2^{n\ell/2}} \det(U\Sigma^{-1}U^T)^{-1/2}. 
\end{eqnarray*}
Taking expectations with respect to $U$ gives from \eqref{FSigma} 
\begin{eqnarray*}
\widehat{F}(\kappa;\Sigma)
&=& c(\kappa;\Sigma)^{-1} \det(\Sigma)^{\ell k} \gamma_{\kappa}
\EX{U}{Z_{\kappa}(U;\Sigma)} \\
&=&  c(\kappa;\Sigma)^{-1}
\det(\Sigma)^{\ell k} \gamma_{\kappa} d_{\kappa} \frac{1}{\Gamma_{\ell}(\frac{n}{2})}
\frac{1}{2^{n\ell/2}}  \EX{U}{\det(U\Sigma^{-1}U^T)^{-1/2}}.
\end{eqnarray*}
The expectation with respect to $U$ is computed using \eqref{denU}, 
\begin{eqnarray*}
\EX{U}{\det(U\Sigma^{-1}U^T)^{-1/2}}
&=& \int_{O(\ell,n)} \det(U\Sigma^{-1}U^T)^{-1/2} f_U(U) \tilde{\mu}(dU) \\
&=& \det(\Sigma)^{-\ell/2} \int_{O(\ell,n)}   \det(U\Sigma^{-1}U^T)^{-(n+1)/2}  \tilde{\mu}(dU) .
\end{eqnarray*}
Replacing  this expression into the previous relation, we conclude that
\begin{gather*}
\widehat{F}(\kappa; \Sigma)
= c(\kappa;\Sigma)^{-1}\det(\Sigma)^{\ell k-\ell/2} \gamma_{\kappa} d_{\kappa} \frac{1}{\Gamma_{\ell}(\frac{n}{2})}
\frac{1}{2^{n\ell/2}}   \int_{O(\ell,n)}   \det(U\Sigma^{-1}U^T)^{-(n+1)/2}  \tilde{\mu}(dU) \\
=  \det(\Sigma)^{-\ell k}4^{k}
\bigg(\frac{n}{2}\bigg)_{\kappa} \frac{1}{ k !C_{\kappa}(\Id_{\ell})} 
\det(\Sigma)^{-\ell/2} \gamma_{\kappa} d_{\kappa} 
\frac{1}{\Gamma_{\ell}(\frac{n}{2})}
\frac{1}{2^{n\ell/2}}  \times\int_{O(n,\ell)}   \det(U\Sigma^{-1}U^T)^{-(n+1)/2}  \tilde{\mu}(dU),
\end{gather*}
where we used the definition of $c(\kappa;\Sigma)$ in \eqref{cSigma}. Combining this expression with the definitions of $\gamma_{\kappa}$ in \eqref{RelHerLag} and $d_{\kappa}$ in \eqref{dk}, yields after simplications
\begin{eqnarray*}
\widehat{F}(\kappa; \Sigma) 
&=& \frac{ (-2)^{k}  }{\det(\Sigma)^{\ell k} k!}\bigg(\frac{n}{2}\bigg)_{\kappa} 
\sum_{s=0}^{k} \sum_{\sigma\vdash s}
{\kappa \choose \sigma} (-1)^s\frac{(\frac{n+1}{2})_{\sigma}}{(\frac{n}{2})_{\sigma}} \\
&&\times
\det(\Sigma)^{-\ell/2}   2^{\ell/2}  \frac{\Gamma_{\ell}(\frac{n+1}{2})}{\Gamma_{\ell}(\frac{n}{2})}  \int_{O(n,\ell)}   \det(U\Sigma^{-1}U^T)^{-(n+1)/2} \tilde{\mu}(dU),
\end{eqnarray*}
which finishes the proof.
\end{proof}

\subsubsection*{Proof of Theorem \ref{InterVol}}
In order to prove \eqref{For2}, it is sufficient to prove the relation  
\begin{gather}
\det(\Sigma)^{-\ell/2}   2^{\ell/2}  \frac{\Gamma_{\ell}(\frac{n+1}{2})}{\Gamma_{\ell}(\frac{n}{2})}
\int_{O(n,\ell)}   \det(U\Sigma^{-1}U^T)^{-(n+1)/2}  \tilde{\mu}(dU)  
= \frac{(n)_{\ell} }{(2\pi)^{\ell/2} \kappa_{n-\ell}} V(\mathcal{E}_{\Sigma}[\ell],\mathbb{B}_n[n-\ell]) , \label{Identity}
\end{gather}
since then \eqref{For2}  directly follows after combining \eqref{Identity} with \eqref{For1}. Let us now prove \eqref{Identity}.
A direct computation shows that
\begin{gather} \label{Const}  
2^{\ell/2}  \frac{\Gamma_{\ell}(\frac{n+1}{2})}{\Gamma_{\ell}(\frac{n}{2})}
= \frac{(n)_{\ell} }{(2\pi)^{\ell/2} } \frac{\kappa_{n}}{\kappa_{n-\ell}}, \quad \kappa_{n}= \frac{\pi^{n/2}}{\Gamma(1+n/2)}.
\end{gather}
Since the mixed volume on the RHS of \eqref{Identity} only involves the convex bodies $\mathcal{E}_{\Sigma}$ and $\mathbb{B}_n$, we can use \eqref{IntMix} to represent it as an intrinsic volume, 
\begin{gather}\label{RelMI}
V(\mathcal{E}_{\Sigma}[\ell],\mathbb{B}_n[n-\ell])  = \frac{ \kappa_{n-\ell} }{{n \choose \ell}} V_{\ell}(\mathcal{E}_{\Sigma}).
\end{gather}
Using the integral representation \eqref{Kubota} for the $\ell$-th intrinsic volume yields
\begin{gather*}
V_{\ell}(\mathcal{E}_{\Sigma}) = {n\choose \ell}\frac{\kappa_n}{\kappa_{\ell}\kappa_{n-\ell}} 
\int_{G(n,\ell)} \mathrm{vol}_{\ell}(\mathcal{E}_{\Sigma}|\mathscr{U} ) \nu_{n,\ell}(d\mathscr{U} ), 
\end{gather*}
where $\nu_{n,\ell}$ is the Haar probability measure on the Grassmannian $G(n,\ell)$.
Combining this with \eqref{Const} shows that the identity in \eqref{Identity} is equivalent to 
\begin{gather}\label{Identity2}
\det(\Sigma)^{-\ell/2}   \int_{O(n,\ell)}   \det(U\Sigma^{-1}U^T)^{-(n+1)/2}  \tilde{\mu}(dU)
= \frac{1}{\kappa_{\ell}} \int_{G(n,\ell)} \mathrm{vol}_{\ell}(\mathcal{E}_{\Sigma}|\mathscr{U} ) \nu_{n,\ell}(d\mathscr{U} ).
\end{gather}
Therefore it remains to prove \eqref{Identity2}.
We   rewrite the LHS of \eqref{Identity2} as follows
\begin{gather*}
\int_{O(n,\ell)}   \det(U\Sigma^{-1}U^T)^{-1/2} \Pi_{n,\ell}(dU)
=  \int_{O(n,\ell)}   \det([U\Sigma^{-1}U^T]^{-1})^{1/2} \Pi_{n,\ell}(dU),
\end{gather*}
where $\Pi_{n,\ell}(dU)= \det(\Sigma)^{-\ell/2} \det(U\Sigma^{-1}U^T)^{-n/2}\tilde{\mu}(dU) $ is a probability measure on $O(n,\ell)$ by virtue of \eqref{denU}. 
We now argue that 
\begin{gather*}
\int_{O(n,\ell)}   \det([U\Sigma^{-1}U^T]^{-1})^{1/2} \Pi_{n,\ell}(dU) 
= \int_{O(n,\ell)}   \det(U \Sigma U^T)^{1/2} \Pi_{n,\ell}(dU).
\end{gather*}
In order to see this, let us write $\Sigma = O \Lambda O^T$ for $O \in O(n), \Lambda=\mathrm{diag}(\lambda_1,\ldots, \lambda_n)$. Then, we have $\det(U\Sigma^{-1}U^T) = \det(W \Lambda^{-1} W^T) $ with $W=UO \in O(n,\ell)$ since $WW^T = UO(UO)^T= \Id_{\ell}$. 
Therefore, it suffices to consider the case where $\Sigma=\Lambda$ is diagonal. Moreover, since $W \in O(n,\ell)$ we have  for every $Q\in O(\ell)$ that  $(QW)(QW)^T=QWW^TQ^T=\Id_{\ell}$, that is $QW \in O(n,\ell)$. This implies that, up to rotating the matrix $W=UO$, we can assume that the rows of $W$ coincide with the $\ell$ first canonical basis vectors $e_1,\ldots, e_{\ell}$ in $\R^n$. Then, we compute
\begin{eqnarray*}
\det( [W \Lambda^{-1} W^T]^{-1} )
= \det( W\Lambda^{-1} W^T )^{-1} = \left(\prod_{i=1}^{\ell}\lambda_i^{-1}\right)^{-1}
= \prod_{i=1}^{\ell}\lambda_i 
= \det(W\Lambda W^T).
\end{eqnarray*}
Therefore, integrating on $O(n,\ell)$ and noting that 
$\Pi_{n,\ell}(d(QU))=\Pi_{n,\ell}(dU)$ for every $Q \in O(\ell)$ yields the claim. Now, since  $\Pi_{n,\ell}$ is left-invariant by orthogonal transformations, it can be viewed as a probability measure on $O(n,\ell)/O(\ell) \simeq G(n,\ell)$, where  two elements $U_1,U_2$ in $O(n,\ell)$ are equivalent if and only if there exists $Q\in O(\ell)$ such that $U_1=QU_2$. Thus, since $\nu_{n,\ell}$ is the unique left and right-invariant Haar probability measure on $G(n,\ell)$, it must coincide with $\Pi_{n,\ell}$.
 Writing $\mathscr{U}$ for the $\ell$-dimensional linear subspace generated by the rows of $U$, we have that the matrix $U\Sigma U^T$ represents the ellipsoid $\mathcal{E}_{\Sigma}|\mathscr{U}$ of volume $\mathrm{vol}_{\ell}(\mathcal{E}_{\Sigma}|\mathscr{U})=\kappa_{\ell}\det(U\Sigma U^T)^{1/2}$, implying in turn
\begin{gather*}
\int_{O(n,\ell)}   \det(U \Sigma U^T)^{1/2} \Pi_{n,\ell}(dU)
= \int_{G(n,\ell)} \frac{1}{\kappa_{\ell}} \mathrm{vol}_{\ell}(\mathcal{E}_{\Sigma}|\mathscr{U}) \nu_{n,\ell}(d\mathscr{U}).
\end{gather*}
This proves \eqref{Identity2} and thus \eqref{Identity}. Formula \eqref{For3} follows from \eqref{For2} and relation \eqref{RelMI}. Formula \eqref{Kab} is obtained when setting $\kappa=(0)$ in \eqref{For2} and \eqref{For3}, respectively, and using the fact that $\widehat{F}((0);\Sigma)=\EX{X}{F(X)}$.

\subsubsection{An attempt at generalizing to distinct covariance matrices}\label{General}
In this section, we try to generalize the results of Theorem \ref{CoeffSigma} and Theorem \ref{InterVol} to the more general setting where the rows of $X$ are independent Gaussian vectors with distinct covariance matrices. 

\medskip
Let $\{\Sigma_i \in \R^{n\times n} : i\in [\ell]\}$ be  positive-definite symmetric matrices and $\{X^{(i)}=(X^{(i)}_1,\ldots,X^{(i)}_n):i\in [\ell]\}$  a collection of $\ell$ independent Gaussian vectors with respective covariance matrices  $\Sigma_1,\ldots,\Sigma_{\ell}$. 
We write $X$ for the $\ell \times n$ matrix whose $i$-th row is $X^{(i)}$.
Then, the vector $\mathrm{Vec}(X^T)$ has the multivariate normal distribution $\mathcal{N}_{\ell n}(0,\Omega)$, where 
\begin{gather*}
\Omega= \sum_{i=1}^{\ell} \left( e_ie_i^T \otimes \Sigma_i\right) = \mathrm{diag}(\Sigma_1,\ldots,\Sigma_{\ell}) 
= \Sigma_1 \oplus \ldots \oplus \Sigma_{\ell},
\end{gather*}
with $e_i \in \R^{\ell}$ denoting the $i$-th canonical basis vector.  The density function of $X$ is given by  
\begin{gather*}
\phi_{\Omega}(\mathrm{Vec}(X^T)) = 
(2\pi)^{-n\ell/2}  \det(\Omega)^{-n\ell/2} \etr{ -\frac{1}{2} \Omega^{-1}\mathrm{Vec}(X^T)\mathrm{Vec}(X^T)^T}.
\end{gather*}
If $X $ is distributed as above, a computation shows that the $\ell \times n$ matrix
\begin{gather*}
Y_X:= (\Id_{\ell} \otimes \mathrm{Vec}(X^T)^T \Omega^{-1/2})
(\mathrm{Vec}(\Id_{\ell} ) \otimes \Id_n) 
\end{gather*}
has the standard matrix normal distribution. Therefore, we consider the matrix-variate polynomials
\begin{gather}\label{HerOmega}
H^{(\ell,n)}_{\kappa}(X;\Omega) = \det(\Omega)^{k} H_{\kappa}^{(\ell,n)}(Y_X) , \quad \kappa \vdash k
\end{gather}
satisfying the orthogonality relation (similar as in the proof of Proposition \ref{DefSigma})
\begin{gather*}
\int_{\R^{\ell \times n}}
H^{(\ell,n)}_{\kappa}(X;\Omega) 
H^{(\ell,n)}_{\sigma}(X;\Omega) \phi_{\Omega}(\mathrm{Vec}(X^T)) (dX) 
= \ind{\kappa=\sigma} \times \det(\Omega)^{2k }  
4^{-k}\bigg(\frac{n}{2}\bigg)_{\kappa}^{-1} k !C_{\kappa}(\Id_{\ell}), 
\end{gather*}
and thus the family 
\begin{gather*}
\mathbb{H}_{\Omega}:= \left\{ c(\kappa;\Omega)^{-1/2} H_{\kappa}^{(\ell,n)}(\cdot;\Omega): \kappa \vdash k \geq0\right\} , \quad 
c(\kappa;\Omega):=\det(\Omega)^{2 k}4^{-k}\bigg(\frac{n}{2}\bigg)_{\kappa}^{-1} k!C_{\kappa}(\Id_{\ell})
\end{gather*}
forms an orthonormal system of $L^2(\mu_{X})$, where, as usual, $\mu_{X}$ indicates the spectral measure associated with $XX^T$. Expanding the function $F(X)=\det(XX^T)^{1/2} \in L^2(\mu_{X})$ in the basis $\mathbb{H}_{\Omega}$, using the relation 
\begin{gather*}
\phi_{\Omega}(\mathrm{Vec}(X^T)) = \det(\Omega)^{-1/2} \phi^{(\ell,n)}(Y_X) 
\end{gather*}
and the definition of $H_{\kappa}^{(\ell,n)}(\cdot; \Omega)$, we have that the associated projection coefficients are 
\begin{eqnarray*}
 \widehat{F}(\kappa;\Omega) &=& c(\kappa;\Omega)^{-1}  
\int_{\R^{\ell \times n}} F(X) H^{(\ell,n)}_{\kappa}(X;\Omega) \phi_{\Omega}(\mathrm{Vec}(X^T)) (dX) \\
&=& \det(\Omega)^{k-1/2}  \int_{\R^{\ell \times n}} F(X) H_{\kappa}^{(\ell,n)}(Y_X)  \phi^{(\ell,n)}(Y_X) (dX)\\
&=& \det(\Omega)^{k-1/2} \gamma_{\kappa}
\int_{\R^{\ell \times n}} F(X)  L_{\kappa}^{(\frac{n-\ell-1}{2})}(2^{-1}Y_XY_X^T)  (2\pi)^{-n\ell/2}\etr{-2^{-1}Y_XY_X^T} (dX).
\end{eqnarray*}
The idea is now to perform the polar change of variables $X= R^{1/2}U$. In order to do so, we compute $\Tr(Y_XY_X^T)$:
\begin{gather*}
\Tr(Y_XY_X^T) = \Tr(\Omega^{-1} \mathrm{Vec}(X^T)\mathrm{Vec}(X^T)^T) 
= \mathrm{Vec}(X^T)^T \Omega^{-1} \mathrm{Vec}(X^T) \\
= \mathrm{Vec}(X^T)^T \sum_{i=1}^{\ell} \left( e_ie_i^T \otimes \Sigma_i^{-1}\right) \mathrm{Vec}(X^T) 
= \sum_{i=1}^{\ell}\mathrm{Vec}(X^T)^T \left( e_ie_i^T \otimes \Sigma_i^{-1}\right) \mathrm{Vec}(X^T).
\end{gather*}
Then, using the relation 
$\mathrm{Vec}(S)^T (BD\otimes E) \mathrm{Vec}(S)=\Tr(DS^TESB)$ (see e.g. \cite[Theorem 1.2.22]{Gupta2018}), 
we obtain 
\begin{eqnarray*}
\Tr(Y_XY_X^T) 
&=& \sum_{i=1}^{\ell}\mathrm{Vec}(X^T)^T \left( e_ie_i^T \otimes \Sigma_i^{-1}\right) \mathrm{Vec}(X^T)  = \sum_{i=1}^{\ell} \Tr\left( e_i^T R^{1/2} U \Sigma^{-1}_i U^T R^{1/2} e_i
\right)  \\
&=& \sum_{i=1}^{\ell} \Tr\left( e_ie_i^T R^{1/2} U \Sigma^{-1}_i U^T R^{1/2}  \right) =  \Tr\left(\sum_{i=1}^{\ell}e_ie_i^T R^{1/2} U \Sigma^{-1}_i U^T R^{1/2}\right).
\end{eqnarray*}
The difficulty to proceed now is the following: the above computation suggests that we cannot write $\Tr(Y_XY_X^T)$ as $\Tr(AR)$ for some matrix $A$, due to the fact that one cannot exploit the permutation invariance of the trace in view of 
presence of the matrix $e_i e_i^T$. We remark that, when $\Sigma_i=\Sigma$ for every $i=1,\ldots,\ell$, the above formula gives $\Tr(Y_XY_X^T)= \Tr(\Id_{\ell}U\Sigma^{-1}U^T)=\Tr(U\Sigma^{-1}U^T) $, which coincides with our computations in the proof of Theorem \ref{CoeffSigma}. This observation makes it in particular difficult to directly apply the integration formula \eqref{intzonal}, and thus hints to the fact that the polynomials $H_{\kappa}^{(\ell,n)}(\cdot;\Omega)$ are not easily amenable to matrix calculus. 

\subsection{Proofs of Section \ref{SecOrtho}}

\subsubsection*{Proofs of Theorem \ref{Action} and Theorem \ref{OrtA}}
Our proofs of Theorem \ref{Action} and Theorem \ref{OrtA} involve auxiliary polynomials introduced in \cite{Hay}. 
For $X \in \R^{\ell \times n}$ and $ A \in \R^{n\times n}$ symmetric, we consider the polynomials $P_{\kappa}(X,A), \ \kappa \vdash k$  defined by (see \cite[Eq.(34)]{Hay}) 
\begin{gather*}
\etr{-XX^T}P_{\kappa}(X,A)=\frac{(-1)^k}{\pi^{n\ell/2}} \int_{\R^{\ell\times n}} \etr{-2iXU^T}\etr{-UU^T}C_{\kappa}(UAU^T) (dU). 
\end{gather*}
These polynomials have the following properties (see e.g. \cite[p.229]{BFZP} and  \cite[Section 6]{Hay}).
\begin{Lem}
For every $k\geq 0, \kappa \vdash k$ and symmetric
$A\in \R^{n\times n}$, we have that   
\begin{gather}
P_{\kappa}(X,\Id_n) = 2^k\left(\frac{n}{2}\right)_{\kappa} H_{\kappa}^{(\ell,n)}(\sqrt{2}X) \label{P1}\\
\int_{O(n)} P_{\kappa}(XH,A) \tilde{\mu}(dH) = \int_{O(n)} P_{\kappa}(X,HAH^T) \tilde{\mu}(dH) = \frac{C_{\kappa}(A)}{C_{\kappa}(\Id_n)} P_{\kappa}(X,\Id_n) \label{P2}\\
P_{\kappa}(X,A) = \EX{V}{C_{\kappa}((X+iV)A(X+iV)^T)} , \quad V \sim \mathscr{N}(0,\Id_{\ell}/2\otimes\Id_n) . \label{P3}
\end{gather}
\end{Lem}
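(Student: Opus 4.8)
The plan is to extract all three identities directly from the Gaussian (Fourier-type) integral defining $P_\kappa(X,A)$, the only non-bookkeeping input being the classical $O(n)$-averaging formula for zonal polynomials. First I would complete the square in the exponent: since $\Tr(UU^T)+2i\Tr(XU^T)=\Tr\bigl((U+iX)(U+iX)^T\bigr)+\Tr(XX^T)$, the common factor $\etr{-XX^T}$ cancels and one obtains the equivalent representation
\begin{gather*}
P_\kappa(X,A)=\frac{(-1)^k}{\pi^{n\ell/2}}\int_{\R^{\ell\times n}}\etr{-(U+iX)(U+iX)^T}\,C_\kappa(UAU^T)\,(dU).
\end{gather*}

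For \eqref{P3} I would then substitute $U\mapsto U-iX$ and deform the contour back to the real locus $\R^{\ell\times n}$. Writing $U-iX=-i(X+iU)$ and using homogeneity of $C_\kappa$ of degree $k$, so that $C_\kappa\bigl((U-iX)A(U-iX)^T\bigr)=(-1)^kC_\kappa\bigl((X+iU)A(X+iU)^T\bigr)$, the two factors $(-1)^k$ cancel and one is left with $\pi^{-n\ell/2}\int\etr{-UU^T}C_\kappa\bigl((X+iU)A(X+iU)^T\bigr)(dU)$, which is exactly $\EX{V}{C_\kappa((X+iV)A(X+iV)^T)}$ since $V\sim\mathscr{N}(0,\Id_\ell/2\otimes\Id_n)$ has density $\pi^{-n\ell/2}\etr{-VV^T}$. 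The one delicate point — the main obstacle — is justifying the contour shift: one checks that $Z\mapsto\etr{-ZZ^T}C_\kappa\bigl((Z-iX)A(Z-iX)^T\bigr)$ is entire in the entries of $Z\in\C^{\ell\times n}$ and that $|\etr{-ZZ^T}|=\exp\bigl(-\Tr((\Re Z)(\Re Z)^T)+\Tr((\Im Z)(\Im Z)^T)\bigr)$ decays fast enough in $\Re Z$ (the zonal factor being only polynomial) to annihilate the lateral contributions at infinity, so that Cauchy's theorem applies.

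For \eqref{P1} I would specialise $A=\Id_n$ and recognise the integral as a constant-coefficient differential operator applied to a Gaussian. Starting from $\etr{-XX^T}=\pi^{-n\ell/2}\int\etr{-2iXU^T-UU^T}(dU)$ and noting that each $\partial/\partial X_{ij}$ brings down the factor $-2iU_{ij}$, one gets $C_\kappa(\partial X\partial X^T)\etr{-XX^T}=(-4)^k\pi^{-n\ell/2}\int C_\kappa(UU^T)\etr{-2iXU^T-UU^T}(dU)$, i.e. $\etr{-XX^T}P_\kappa(X,\Id_n)=4^{-k}C_\kappa(\partial X\partial X^T)\etr{-XX^T}$. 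Comparing this with Rodrigues' formula \eqref{Rod} evaluated at $\sqrt2X$ — using $\phi^{(\ell,n)}(\sqrt2X)=(2\pi)^{-n\ell/2}\etr{-XX^T}$ together with the chain rule, which contributes an extra factor $2^{-k}$ because $C_\kappa(\partial X\partial X^T)$ is homogeneous of degree $2k$ in the entries of $\partial X$ — yields $H^{(\ell,n)}_\kappa(\sqrt2X)\etr{-XX^T}=8^{-k}(n/2)_\kappa^{-1}C_\kappa(\partial X\partial X^T)\etr{-XX^T}$, and dividing the two displays gives $P_\kappa(X,\Id_n)=2^k(n/2)_\kappa H^{(\ell,n)}_\kappa(\sqrt2X)$.

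Finally, for \eqref{P2} the first equality is already pointwise in $H$: in the defining integral for $P_\kappa(XH,A)$ the substitution $U\mapsto UH$ (with Jacobian $1$) together with $HH^T=\Id_n$ turns the integrand into $\etr{-2iXU^T-UU^T}C_\kappa(UHAH^TU^T)$, so $P_\kappa(XH,A)=P_\kappa(X,HAH^T)$. For the second equality I would average the latter over $O(n)$, interchange the $(dU)$- and Haar-integrations (absolute convergence being clear), and invoke the classical zonal-average identity $\int_{O(n)}C_\kappa(UHAH^TU^T)\,\tilde\mu(dH)=C_\kappa(A)\,C_\kappa(UU^T)/C_\kappa(\Id_n)$ — obtained from the $O(n)$-average of $C_\kappa(HAH^T\cdot U^TU)$, using \eqref{Inv} and the fact that $C_\kappa(U^TU)=C_\kappa(UU^T)$ because $\kappa$ has at most $\ell\le n$ parts. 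This factors $C_\kappa(A)/C_\kappa(\Id_n)$ out of the integral and leaves precisely the defining integral of $P_\kappa(X,\Id_n)$. All three identities then follow; the only ingredients beyond Gaussian-integral bookkeeping are the contour-shift justification above and the $O(n)$-averaging formula for zonal polynomials recorded in \cite{BFZP}.
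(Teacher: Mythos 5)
Your proof is correct. Note that the paper itself gives no proof of this lemma: it is quoted from the literature (\cite[p.229]{BFZP} and \cite[Section 6]{Hay}), so a self-contained derivation like yours is genuinely additional content rather than a variant of an argument in the text. Your route — completing the square in the defining integral and shifting the contour to obtain \eqref{P3}; comparing the Fourier representation $\etr{-XX^T}=\pi^{-n\ell/2}\int\etr{-2iXU^T-UU^T}(dU)$, differentiated by $C_\kappa(\partial X\partial X^T)$, with the Rodrigues formula \eqref{Rod} evaluated at $\sqrt2X$ (the chain rule supplying the extra $2^{-k}$) to obtain \eqref{P1}; and the change of variables $U\mapsto UH$ plus the splitting formula $\int_{O(n)}C_\kappa(SHTH^T)\tilde\mu(dH)=C_\kappa(S)C_\kappa(T)/C_\kappa(\Id_n)$ to obtain \eqref{P2} — is essentially how these facts are established in the cited sources, and your constants check out: $\etr{-XX^T}P_\kappa(X,\Id_n)=4^{-k}C_\kappa(\partial X\partial X^T)\etr{-XX^T}$ against $H^{(\ell,n)}_\kappa(\sqrt2X)\etr{-XX^T}=8^{-k}(n/2)_\kappa^{-1}C_\kappa(\partial X\partial X^T)\etr{-XX^T}$ does give the factor $2^k(n/2)_\kappa$, and the sign bookkeeping $U-iX=-i(X+iU)$ together with homogeneity of $C_\kappa$ cancels the $(-1)^k$ in the definition, while the density of $V\sim\mathscr{N}(0,\Id_\ell/2\otimes\Id_n)$ is indeed $\pi^{-n\ell/2}\etr{-VV^T}$.

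Two points you pass over quickly are routine but deserve a word each: (i) in \eqref{P1} you differentiate under the integral sign, which is justified by Gaussian domination since the zonal factor contributes only polynomial growth; (ii) in \eqref{P2} you use $C_\kappa(UHAH^TU^T)=C_\kappa(HAH^T\,U^TU)$ for a merely symmetric $A$, whereas \eqref{Inv} is stated for positive-definite arguments; this extends either by observing that the nonzero eigenvalues of $BC$ and $CB$ coincide and zonal polynomials are stable under appending zero eigenvalues (no condition on the number of parts of $\kappa$ is needed for that), or by polynomial continuation in $A$ from the positive-definite cone, since both sides are polynomials in the entries of $A$. Neither point affects the validity of your argument.
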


In order to prove  Theorem \ref{Action}, we shall first show the following Lemma, linking the conditional expectation of $H_{\kappa}^{(\ell,n)}$ with the polynomial $P_{\kappa}$ introduced above. 
\begin{Lem}
Let $k\geq 0$ be an integer, $\kappa\vdash k$ a partition of $k$ and $\Delta=\mathrm{diag}(d_1,\ldots,d_n)$ a diagonal matrix with $|d_i|\leq 1$ for $i=1,\ldots,n$. Then, for $X_0 \sim \mathscr{N}_{\ell\times n}(0,\Id_{\ell}\otimes \Id_n)$, we  have for every $X \in \R^{\ell \times n}$,
\begin{gather}\label{Aux}
\EX{X_0}{H_{\kappa}^{(\ell,n)}( X\Delta+X_0(\Id_n-\Delta^2)^{1/2}) }
= 2^{-k}\left(\frac{n}{2}\right)_{\kappa}^{-1} P_{\kappa}\left(\frac{X}{\sqrt{2}},\Delta^2\right).
\end{gather}
\end{Lem}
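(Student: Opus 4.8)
The plan is to verify \eqref{Aux} by computing the left-hand side directly from the Rodrigues formula \eqref{Rod} for $H_\kappa^{(\ell,n)}$ and the Gaussian integral, and then matching the result with the integral representation defining $P_\kappa$. First I would write out the expectation
\begin{gather*}
\EX{X_0}{H_{\kappa}^{(\ell,n)}( X\Delta+X_0(\Id_n-\Delta^2)^{1/2}) }
= \int_{\R^{\ell\times n}} H_\kappa^{(\ell,n)}(X\Delta + Y(\Id_n-\Delta^2)^{1/2})\,\phi^{(\ell,n)}(Y)\,(dY),
\end{gather*}
and substitute the Rodrigues formula \eqref{Rod}, which expresses $H_\kappa^{(\ell,n)}\phi^{(\ell,n)}$ as $4^{-k}(n/2)_\kappa^{-1}C_\kappa(\partial X\partial X^T)\phi^{(\ell,n)}$. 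The key idea is the standard one for proving Mehler-type identities: the Gaussian average of a derivative of a Gaussian density reproduces (up to constants and rescalings) the density again, and this converts the differential operator $C_\kappa(\partial\,\partial^T)$ acting at the point $X\Delta+Y(\Id_n-\Delta^2)^{1/2}$ into the same operator acting at a rescaled argument. Concretely, I would use the semigroup/heat-kernel interpretation: integrating $(C_\kappa(\partial\partial^T)\phi^{(\ell,n)})$ against another Gaussian amounts to applying $C_\kappa$ of the derivative matrix to the convolution of the two densities, which is again a (rescaled) Gaussian density in $X$, with covariance collapsing to $\Id_n$ so that the surviving $X$-dependence enters through $X\Delta$ only.

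The cleanest route, I expect, is to bypass Rodrigues and instead use the Fourier-type representation \eqref{P3}, $P_\kappa(X,A)=\EX{V}{C_\kappa((X+iV)A(X+iV)^T)}$ with $V\sim\mathscr N(0,\Id_\ell/2\otimes\Id_n)$, together with the relation \eqref{P1}, $P_\kappa(X,\Id_n)=2^k(n/2)_\kappa H_\kappa^{(\ell,n)}(\sqrt 2 X)$. From \eqref{P1}, $H_\kappa^{(\ell,n)}(Z) = 2^{-k}(n/2)_\kappa^{-1}P_\kappa(Z/\sqrt2,\Id_n)$, so
\begin{gather*}
H_\kappa^{(\ell,n)}(X\Delta+X_0(\Id_n-\Delta^2)^{1/2})
= 2^{-k}\Big(\tfrac n2\Big)_\kappa^{-1}\EX{V}{C_\kappa\big(W A W^T\big)}\Big|_{A=\Id_n},
\end{gather*}
where $W = \tfrac1{\sqrt2}(X\Delta+X_0(\Id_n-\Delta^2)^{1/2}) + iV$. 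Then I would take the expectation over $X_0\sim\mathscr N_{\ell\times n}(0,\Id_\ell\otimes\Id_n)$ and use independence of $X_0$ and $V$: the combined complex Gaussian $\tfrac1{\sqrt2}X_0(\Id_n-\Delta^2)^{1/2} + iV$ has, coordinatewise, the property that its "real part times $A$ times itself" structure is governed by the covariance $\tfrac12(\Id_n-\Delta^2)$ from $X_0$; absorbing this into the argument of $C_\kappa$, the net effect is to replace $A=\Id_n$ by $\Delta^2$ inside $C_\kappa$ and to leave a fresh Gaussian $iV'$ with $V'\sim\mathscr N(0,\Id_\ell/2\otimes\Id_n)$. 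This yields exactly $2^{-k}(n/2)_\kappa^{-1}\EX{V'}{C_\kappa((\tfrac{X}{\sqrt2}+iV')\Delta^2(\tfrac{X}{\sqrt2}+iV')^T)} = 2^{-k}(n/2)_\kappa^{-1}P_\kappa(X/\sqrt2,\Delta^2)$ by \eqref{P3}.

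The main obstacle will be justifying the Gaussian-absorption step rigorously: one must check that for a homogeneous degree-$k$ symmetric polynomial of the form $C_\kappa(ZAZ^T)$ evaluated at $Z = M + iV$ with $V$ Gaussian, averaging over an additional independent real Gaussian perturbation $N(\Id_n-\Delta^2)^{1/2}$ of $M$ has the same effect (after rescaling) as enlarging the imaginary Gaussian, and then recombining this with the factor $A=\Id_n$ into $A=\Delta^2$. The careful way to do this is to expand $C_\kappa(ZAZ^T)$ into monomials in the entries of $Z$ via \eqref{ZonHom}–\eqref{HOM}, track how each Gaussian moment behaves (real parts and imaginary parts pair up in a controlled way because the relevant covariances are all diagonal in the $n$-index and proportional to $\Id_\ell$ in the $\ell$-index), and observe that the bookkeeping is identical on both sides; alternatively, one invokes analyticity in $\Delta$ and checks the identity for $\Delta=\Id_n$ (trivial, both sides equal $H_\kappa^{(\ell,n)}(X)\cdot$const via \eqref{P1}) and for $\Delta$ with small entries by a perturbative/moment argument, extending by polynomial identity in the $d_i$. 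I would present the monomial-expansion version since it is self-contained, using \eqref{P3} and \eqref{P1} only at the very end to repackage the answer as $P_\kappa(X/\sqrt2,\Delta^2)$.
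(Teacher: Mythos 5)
Your proposal is correct, and its skeleton coincides with the paper's proof: both arguments rest on \eqref{P1} and \eqref{P3} together with the monomial expansion \eqref{HOM} of zonal polynomials, reducing the matrix identity to independent scalar computations. Where you genuinely differ is in how the scalar step is handled. The paper first derives the explicit expansion \eqref{PWB} of $P_{\kappa}(W,B)$ into products of univariate Hermite polynomials (via the entrywise identity $\E{(w+iv)^{m}}=2^{-m/2}H_{m}(\sqrt{2}\,w)$), applies it three times, and then integrates out $X_0$ entry by entry using the classical one-dimensional Mehler action \eqref{ActPt}, i.e. $\E{H_{m}(d_j x+\sqrt{1-d_j^{2}}\,\xi)}=d_j^{m}H_{m}(x)$. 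You instead absorb the real Gaussian perturbation directly into the imaginary one: for each entry the fluctuation $2^{-1/2}\sqrt{1-d_j^{2}}\,\xi+i\eta$ (with $\xi\sim\mathscr{N}(0,1)$, $\eta\sim\mathscr{N}(0,1/2)$ independent) has moment generating function $\exp(-t^{2}d_j^{2}/4)$, which is exactly that of $i d_j\eta'$ with $\eta'\sim\mathscr{N}(0,1/2)$; since all entries are independent, the joint moments of $2^{-1/2}\bigl(X\Delta+X_0(\Id_n-\Delta^{2})^{1/2}\bigr)+iV$ and of $\bigl(2^{-1/2}X+iV'\bigr)\Delta$ agree, so applying this to the polynomial $C_{\kappa}(\cdot\,\cdot^{T})$ and invoking \eqref{P3} yields \eqref{Aux}. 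This moment-matching route is slightly slicker — it avoids deriving \eqref{PWB} and never mentions univariate Hermite polynomials — whereas the paper's route makes the Hermite expansion of $P_{\kappa}(W,B)$ explicit, which is useful bookkeeping elsewhere in the argument. Two small remarks: your opening paragraph via the Rodrigues formula is superfluous (you rightly abandon it), and the absorption step is the only place a gap could hide, so in a final write-up you should record the pseudo-variance computation above (or your polynomial-identity-in-the-$d_i$ fallback, which also works since only even powers of $\sqrt{1-d_j^{2}}$ survive the expectation); as checked, the step is sound.
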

\begin{proof}
For $W=(W_{lj}) \in \R^{\ell\times n}$ we use the implicit representation \eqref{HOM} of $C_{\kappa}(WW^T)$ as homogeneous polynomials of degree $2k$ in the entries of $W$, 
\begin{gather*}
C_{\kappa}(WW^T) = \sum_{|\alpha|=2k} z_{\alpha}^{\kappa} \prod_{l=1}^{\ell}\prod_{j=1}^{n} W_{lj}^{\alpha_{ij}}.
\end{gather*}
Then, using \eqref{P3} with $B=\mathrm{diag}(b_1,\ldots,b_n)$ such that $b_1,\ldots,b_n\geq0$, we can write 
\begin{eqnarray*}
P_{\kappa}(W,B) &=& \EX{V}{C_{\kappa}((W+iV)B(W+iV)^T)} \\
&=& \EX{V}{C_{\kappa}((WB^{1/2}+iVB^{1/2})(WB^{1/2}+iVB^{1/2})^T)} \\
&=& \sum_{|\alpha|=2k} z_{\alpha}^{\kappa} \prod_{l=1}^{\ell}\prod_{j=1}^{n} \EX{V_{lj}}{(W_{lj}\sqrt{b_{j}}+iV_{lj}\sqrt{b_{j}})^{\alpha_{lj}}} \\
&=&  \sum_{|\alpha|=2k} z_{\alpha}^{\kappa} \prod_{l=1}^{\ell}\prod_{j=1}^{n} b_{j}^{\alpha_{lj}/2}\EX{V_{lj}}{(W_{lj}+iV_{lj})^{\alpha_{lj}}} .
\end{eqnarray*}
Using the one-dimensional representation of Hermite polynomials as Gaussian expectation, 
\begin{gather*}
\EX{V_{lj}}{(W_{lj}+iV_{lj})^{\alpha_{lj}}} = 2^{-\alpha_{lj}/2}H_{\alpha_{lj}}(\sqrt{2}W_{lj}) 
\end{gather*}
leads to  
\begin{eqnarray}\label{PWB}
P_{\kappa}(W,B) &=& \sum_{|\alpha|=2k} z_{\alpha}^{\kappa} \prod_{l=1}^{\ell}\prod_{j=1}^{n} b_{j}^{\alpha_{lj}/2} 2^{-\alpha_{lj}/2}H_{\alpha_{lj}}(\sqrt{2}W_{lj}) \notag\\
&=& 2^{-k}\sum_{|\alpha|=2k} z_{\alpha}^{\kappa} \prod_{l=1}^{\ell}\prod_{j=1}^{n} b_{j}^{\alpha_{lj}/2} H_{2\alpha_{lj}}(\sqrt{2}W_{lj}).
\end{eqnarray}
Applying \eqref{PWB} with $W=X/\sqrt{2}$ and $B=\Delta^2$, yields 
\begin{gather}\label{Eq1}
P_{\kappa}\left(\frac{X}{\sqrt{2}},\Delta^2\right) = 2^{-k}\sum_{|\alpha|=2k} z_{\alpha}^{\kappa} \prod_{l=1}^{\ell}\prod_{j=1}^{n} d_j^{\alpha_{lj}} H_{\alpha_{lj}}(X_{lj}).
\end{gather}
On the other hand, applying \eqref{PWB} with $W=(X/\sqrt{2})\Delta+(X_0/\sqrt{2})(\Id_n-\Delta^2)^{1/2}$ and $B=\Id_n$, we have 
\begin{gather}\label{Eq2}
P_{\kappa}\left( \frac{X}{\sqrt{2}}\Delta+\frac{X_0}{\sqrt{2}}(\Id_n-\Delta^2)^{1/2},\Id_n\right)
= 2^{-k}\sum_{|\alpha|=2k} z_{\alpha}^{\kappa} \prod_{l=1}^{\ell}\prod_{j=1}^{n}  H_{\alpha_{lj}}\left(d_jX_{lj}+\sqrt{1-d_j^2}X_{0,lj}\right).
\end{gather}
Taking expectation with respect to $X_0$ in \eqref{Eq2}, we infer 
\begin{eqnarray*}
&&\EX{X_0}{H_{\kappa}^{(\ell,n)}( X\Delta+X_0(\Id_n-\Delta^2)^{1/2})}\\
&=& 2^{-k}\left(\frac{n}{2}\right)_{\kappa}^{-1} 
\EX{X_0}{P_{\kappa}\left( \frac{X}{\sqrt{2}}\Delta+\frac{X_0}{\sqrt{2}}(\Id_n-\Delta^2)^{1/2},\Id_n\right) } \qquad (\mathrm{by \ }  \eqref{P1})\\
&=&  2^{-k}\left(\frac{n}{2}\right)_{\kappa}^{-1} \EX{X_0}{2^{-k}\sum_{|\alpha|=2k} z_{\alpha}^{\kappa} \prod_{l=1}^{\ell}\prod_{j=1}^{n} H_{\alpha_{lj}}\left(d_jX_{lj}+\sqrt{1-d_j^2}X_{0,lj}\right)} \qquad (\mathrm{by \ }  \eqref{Eq2}) \\
&=& 2^{-k}\left(\frac{n}{2}\right)_{\kappa}^{-1} 2^{-k}\sum_{|\alpha|=2k} z_{\alpha}^{\kappa} \prod_{l=1}^{\ell}\prod_{j=1}^{n} \EX{X_{0,lj}}{ H_{\alpha_{lj}}\left(d_jX_{lj}+\sqrt{1-d_j^2}X_{0,lj}\right)} \qquad (\mathrm{by \ independence})\\
&=&2^{-k}\left(\frac{n}{2}\right)_{\kappa}^{-1} 2^{-k}\sum_{|\alpha|=2k} z_{\alpha}^{\kappa} \prod_{l=1}^{\ell}\prod_{j=1}^{n} d_j^{\alpha_{lj}}H_{\alpha_{lj}}(X_{lj})   \qquad (\mathrm{by \ } \eqref{ActPt}) \\
&=& 2^{-k}\left(\frac{n}{2}\right)_{\kappa}^{-1} P_{\kappa}\left( \frac{X}{\sqrt{2}},\Delta^2\right),     \qquad (\mathrm{by \ }  \eqref{Eq1})
\end{eqnarray*}
which proves relation \eqref{Aux}. 
\end{proof}
We are now in position to prove Theorem \ref{Action}.
\begin{proof}[Proof of Theorem \ref{Action}]
In order to prove \eqref{OURel}, we use Fubini and apply \eqref{Aux} with $\Delta=e^{-tA}$ and 
integrate both sides with respect to the Haar measure on $O(n)$ to obtain: 
\begin{eqnarray*}
\mathcal{O}_{t;A}^{(\ell,n)}H_{\kappa}^{(\ell,n)}(X) &=& \E{ \int_{O(n)} H_{\kappa}^{(\ell,n)}( XH e^{-tA}+X_0(\Id_n-e^{-2tA})^{1/2}) \tilde{\mu}(dH)\bigg|X}\\
&=& \int_{O(n)}\E{  H_{\kappa}^{(\ell,n)}( XH e^{-tA}+X_0(\Id_n-e^{-2tA})^{1/2}) \bigg|X}\tilde{\mu}(dH)\\
&=& 2^{-k}\left(\frac{n}{2}\right)_{\kappa}^{-1} \int_{O(n)} P_{\kappa}\left(\frac{XH}{\sqrt{2}},e^{-2tA}\right) \tilde{\mu}(dH)  \\
&=& 2^{-k}\left(\frac{n}{2}\right)_{\kappa}^{-1} \frac{C_{\kappa}(e^{-2tA})}{C_{\kappa}(\Id_n)} P_{\kappa}\left(\frac{X}{\sqrt{2}},\Id_n\right)  
= \frac{C_{\kappa}(e^{-2tA})}{C_{\kappa}(\Id_n)} H_{\kappa}^{(\ell,n)}(X),
\end{eqnarray*}
where we used \eqref{P1} and \eqref{P2}. This finishes the proof of the first part of the statement. Let us now prove the second part: Assume first that $A= \mathrm{diag}(a,\ldots,a)$ and let $f \in \Pi(\ell,n)$ (see \eqref{Piln}). Then, one has that 
\begin{eqnarray*}
\mathcal{O}_{t;A}^{(\ell,n)}f(X) 
&=&\int_{O(n)}\E{  f(e^{-at}XH+\sqrt{1-e^{-2at}}X_0H) \bigg|X}\tilde{\mu}(dH)\notag\\
&=&\int_{O(n)}\E{  f(e^{-at}X +\sqrt{1-e^{-2at}}X_0 ) \bigg|X}\tilde{\mu}(dH)
= P_{at}^{(\ell n)} f(X),\label{Eq:Coincide}
\end{eqnarray*}
where we used the facts that $X_0\eqLaw X_0H$ for $H\in O(n)$, $f$ is an element of $\Pi(\ell,n)$  and   $\tilde{\mu}$ is a probability measure on $O(n)$. Finally, if the $a_i$'s are not all equal, then arguing as in Remark \ref{RemOU} \textbf{(b)}, one can derive  a relation contradicting the semigroup property of $\mathcal{O}_{t;A}^{(\ell,n)}$.
\end{proof}
\begin{proof}[Proof of Theorem \ref{OrtA}]
We proceed in two steps. In view of Remark \ref{Rem:Ortho}, the matrix $R$ is necessarily symmetric and has non-negative eigenvalues. We start by showing that \eqref{RelOrtA} holds for diagonal matrices $R=\mathrm{diag}(r_1,\ldots,r_n)$. The statement for arbitrary  symmetric matrices will then follow from the diagonal case by a reduction argument.

\medskip
\noindent\underline{\textit{Step 1: $R$ is diagonal.}} Let us first assume that $r_1,\ldots,r_n>0$. Since   $X \eqLaw XH, H \in O(n)$ and using the fact that $H_{\kappa}^{(\ell,n)}(XH)=H_{\kappa}^{(\ell,n)}(X)$ for every $H \in O(n)$ (as can be seen e.g. from \eqref{HerZon} or \eqref{RelHerLag}), we have  
\begin{eqnarray}\label{comp}
&&\E{H_{\kappa}^{(\ell,n)}(X) H_{\sigma}^{(\ell,n)}(XR+X_0(\Id_n-R^2)^{1/2})} \notag\\
&=& \E{H_{\kappa}^{(\ell,n)}(X) \E{H_{\sigma}^{(\ell,n)}(XR+X_0(\Id_n-R^2)^{1/2}) \big|X}} \notag\\
&=& \E{\int_{O(n)}H_{\kappa}^{(\ell,n)}(XH) \E{H_{\sigma}^{(\ell,n)}(XHR+X_0(\Id_n-R^2)^{1/2}) \big|X}\tilde{\mu}(dH)}\notag \\
&=& \E{H_{\kappa}^{(\ell,n)}(X) \E{ \int_{O(n)}H_{\sigma}^{(\ell,n)}(XHR+X_0(\Id_n-R^2)^{1/2}) \tilde{\mu}(dH) \bigg|X}}\\
&=& \E{H_{\kappa}^{(\ell,n)}(X) \mathcal{O}_{1;R_*}^{(\ell,n)}H_{\sigma}^{(\ell,n)}(X)} ,\notag
\end{eqnarray}
where $R_*:=\mathrm{diag}(\ln(1/r_1),\ldots,\ln(1/r_n))$.  
Then, exploiting the action of $\mathcal{O}_{t;R_*}^{(\ell,n)}$ on matrix-variate Hermite polynomials given in \eqref{OURel} we infer 
\begin{gather*} 
\E{H_{\kappa}^{(\ell,n)}(X) H_{\sigma}^{(\ell,n)}(XR+X_0(\Id_n-R^2)^{1/2})} 
=  \frac{C_{\kappa}(e^{-2R_*})}{C_{\kappa}(\Id_n)}\E{H_{\kappa}^{(\ell,n)}(X)H_{\sigma}^{(\ell,n)}(X)}  \\
= \frac{C_{\kappa}(R^2)}{C_{\kappa}(\Id_n)}\E{H_{\kappa}^{(\ell,n)}(X)H_{\sigma}^{(\ell,n)}(X)} 
=\ind{\kappa=\sigma}\times 4^{-k}\bigg(\frac{n}{2}\bigg)_{\kappa}^{-1} k ! C_{\kappa}(R^2)\frac{C_{\kappa}(\Id_{\ell})}{C_{\kappa}(\Id_n)} ,
\end{gather*}
where we used that $e^{-2R_*}=R^2$ and the orthogonality relation for Hermite polynomials \eqref{HerOrtho}.
If some of $r_1,\ldots,r_n$ are equal to zero, the conclusion remains valid, as in this case, from \eqref{comp}, we can use \eqref{Aux} and \eqref{P2} yielding the same conclusion.

\medskip
\noindent\underline{\textit{Step 2: $R$ is symmetric.}}
Since $R$ is symmetric, there exists $O \in O(n)$ such that $R=O\Delta_R O^T$, where $\Delta_R$ is diagonal. Moreover, since
$R^2=O\Delta_R^2O^T$, we have 
\begin{gather*}
\Id_n-R^2 = \Id_n-O\Delta_R^2O^T = OO^T-O\Delta_R^2O^T = O(\Id_n-\Delta_R^2)O^T
\end{gather*}
yielding 
$(\Id_n-R^2)^{1/2} = O(\Id_n-\Delta_R^2)^{1/2}O^T$
as can be seen from the computation
\begin{eqnarray*}
[O(\Id_n-\Delta_R^2)^{1/2}O^T]^2 = [O(\Id_n-\Delta_R^2)^{1/2}O^T][O(\Id_n-\Delta_R^2)^{1/2}O^T] = O(\Id_n-\Delta_R^2)O^T = \Id_n-R^2.
\end{eqnarray*}
Exploiting once more the fact that $H_{\kappa}^{(\ell,n)}(XO) = H_{\kappa}^{(\ell,n)}(X)$ for every $O \in O(n)$, we have 
\begin{eqnarray*}
&&\E{H_{\kappa}^{(\ell,n)}(X) H_{\sigma}^{(\ell,n)}(XR+X_0(\Id_n-R^2)^{1/2})} \\
&=& \E{H_{\kappa}^{(\ell,n)}(X) H_{\sigma}^{(\ell,n)}(XO \Delta_RO^T+X_0O(\Id_n-\Delta_R^2)^{1/2}O^T)} \\
&=&\E{H_{\kappa}^{(\ell,n)}(X) H_{\sigma}^{(\ell,n)}( (XO \Delta_R +X_0O(\Id_n-\Delta_R^2)^{1/2})O^T)} \\
&=&\E{H_{\kappa}^{(\ell,n)}(XO) H_{\sigma}^{(\ell,n)}( XO \Delta_R +X_0O(\Id_n-\Delta_R^2)^{1/2})} \\
&=& \E{H_{\kappa}^{(\ell,n)}(X) H_{\sigma}^{(\ell,n)}( X \Delta_R +X_0 (\Id_n-\Delta_R^2)^{1/2})},
\end{eqnarray*}
where the last equality follows from the fact that the pair $(X,X_0)$ has the same distribution as the pair $(XO,X_0O)$. Since $\Delta_R$ is diagonal, we can apply the conclusion of Step 1 to infer 
\begin{gather*}
 \E{H_{\kappa}^{(\ell,n)}(X) H_{\sigma}^{(\ell,n)}( X \Delta_R +X_0 (\Id_n-\Delta_R^2)^{1/2})}  
= \ind{\kappa=\sigma}\times 4^{-k}\bigg(\frac{n}{2}\bigg)_{\kappa}^{-1} k ! C_{\kappa}(\Delta_R^2)\frac{C_{\kappa}(\Id_{\ell})}{C_{\kappa}(\Id_n)} \\
= \ind{\kappa=\sigma}\times 4^{-k}\bigg(\frac{n}{2}\bigg)_{\kappa}^{-1} k ! C_{\kappa}(R^2)\frac{C_{\kappa}(\Id_{\ell})}{C_{\kappa}(\Id_n)},
\end{gather*}
where in the last line, we used the fact that $C_{\kappa}(\Delta_R^2) = 
C_{\kappa}(O\Delta_R^2O^T)= C_{\kappa}(R^2)$. This finishes the proof.
\end{proof}

\subsection{Proofs of Section \ref{SecApp}}
\subsubsection*{Proof of Proposition \ref{VarTV}}
The variance of the total variation is obtained from \eqref{chaos}. Using the orthogonality of Wiener chaoses, the variance of $\mathbf{V}(\mathfrak{f}_{\ell};U)$ is computed to be
\begin{gather}\label{V}
\V{\mathbf{V}(\mathfrak{f}_{\ell};U)} = \V{\sum_{k \geq1} \mathbf{V}(\mathfrak{f}_{\ell};U)[2k]}  
= \sum_{k\geq1} \V{\mathbf{V}(\mathfrak{f}_{\ell};U)[2k]} 
\end{gather}
where 
\begin{gather*}
\V{\mathbf{V}(\mathfrak{f}_{\ell};U)[2k]}
= \sum_{\kappa \vdash k} \sum_{\sigma \vdash k} \widehat{\Phi}(\kappa)\widehat{\Phi}(\sigma) \int_{U^2} \E{H_{\kappa}^{(\ell,n)}(\mathfrak{f}_{\ell}'(z))
H_{\sigma}^{(\ell,n)}(\mathfrak{f}_{\ell}'(z')) }dzdz',
\end{gather*}
with $\widehat{\Phi}(\kappa)$ as in \eqref{ForId}.
Now, in view of \eqref{CovMatrix}, we can apply Theorem \ref{OrtA} with $R=R(z,z')$ to infer
\begin{gather*}
\E{H_{\kappa}^{(\ell,n)}(\mathfrak{f}_{\ell}'(z))
H_{\sigma}^{(\ell,n)}(\mathfrak{f}_{\ell}'(z'))} 
= \ind{\kappa=\sigma}\times 4^{-k}\bigg(\frac{n}{2}\bigg)_{\kappa}^{-1} k ! C_{\kappa}(R(z,z')^2)\frac{C_{\kappa}(\Id_{\ell})}{C_{\kappa}(\Id_n)},
\end{gather*}
yielding
\begin{eqnarray*}
\V{\mathbf{V}(\mathfrak{f}_{\ell};U)[2k]}
= \sum_{\kappa\vdash k} \widehat{\Phi}(\kappa)^2
4^{-k}\bigg(\frac{n}{2}\bigg)_{\kappa}^{-1} k !
\frac{C_{\kappa}(\Id_{\ell})}{C_{\kappa}(\Id_n)} \int_{U^2} C_{\kappa}(R(z,z')^2) dzdz'.
\end{eqnarray*}
The relation in \eqref{VarV} then follows from \eqref{V}.

\subsubsection*{Proof of Theorem \ref{ARW}}
The Wiener chaos expansion of $\mathbf{V}(\bT_n^{(\ell)};\T)$ is given by \eqref{chaos}: 
\begin{gather}\label{chaosARW}
\mathbf{V}(\bT_n^{(\ell)};\T) = \left(\frac{E_n}{3}\right)^{\ell/2}\sum_{k \geq 0} \mathbf{V}(\bT_n^{(\ell)};\T)[2k], 
\end{gather}
where for $k\geq0$,
\begin{gather*}
\mathbf{V}(\bT_n^{(\ell)};\T)[2k] = \sum_{\kappa \vdash k}  \widehat{\Phi}(\kappa) \int_{\T} H_{\kappa}^{(\ell,3)}(\dot{\bT}_n^{(\ell)}(z)) dz
\end{gather*}
and $\widehat{\Phi}(\kappa)$ is as in \eqref{ForId}.
In particular, for $k=0$, we have by \eqref{coeff0}, 
\begin{gather*}
\E{\mathbf{V}(\bT_n^{(\ell)};\T)} =  
\left(\frac{E_n}{3}\right)^{\ell/2} \widehat{\Phi}((0)) = 
\left(\frac{E_n}{3}\right)^{\ell/2} 2^{\ell/2}\frac{\Gamma_{\ell}(2)}{\Gamma_{\ell}(\frac{3}{2})},
\end{gather*}
which proves \eqref{mean}. 

\medskip
\noindent\underline{\textit{Second Wiener chaos component.}}
The second Wiener chaos of $\mathbf{V}(\bT_n^{(\ell)};\T)$ is given by 
\begin{gather}\label{WC2}
\mathbf{V}(\bT_n^{(\ell)};\T)[2] = 
\left(\frac{E_n}{3}\right)^{\ell/2} \widehat{\Phi}((1)) \int_{\T} H_{(1)}^{(\ell,3)}(\dot{\bT}_n^{(\ell)}(z)) dz . 
\end{gather}
In the following lemma, we establish the asymptotic variance of the second Wiener chaos in the high-energy regime:
\begin{Lem}\label{Var2}
As $n\to \infty, \notcon{n}{0,4,7}{8}$, we have
\begin{gather*}
\V{\mathbf{V}(\bT_n^{(\ell)};\T)[2]}=
\left(\frac{E_n}{3}\right)^{\ell} 2^{\ell}  \frac{\Gamma_{\ell}(2)^2}{\Gamma_{\ell}(\frac{3}{2})^2}
\frac{\ell}{2\Nn}\left(1  + O(n^{-1/28+o(1)})\right).
\end{gather*}
\end{Lem}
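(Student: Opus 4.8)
The plan is to isolate the single Fourier--Hermite coefficient that contributes to the second Wiener chaos and then evaluate the resulting variance by a Parseval computation on $\T$. By \eqref{chaos} (applied with $n=3$ to the field whose normalized Jacobian is $\dot{\bT}_n^{(\ell)}$), only the partition $\kappa=(1)$, the unique partition of $1$, feeds the second chaos, so that $\mathbf{V}(\bT_n^{(\ell)};\T)[2] = (E_n/3)^{\ell/2}\,\widehat{\Phi}((1))\int_{\T}H_{(1)}^{(\ell,3)}(\dot{\bT}_n^{(\ell)}(z))\,dz$. The first step is to record the value of $\widehat{\Phi}((1))$: evaluating the formula \eqref{ForId} of Corollary \ref{CoeffId} at $n=3$, $k=1$, $\kappa=(1)$ (the inner sum having only the two terms $s=0,1$) gives $\widehat{\Phi}((1)) = 2^{\ell/2}\Gamma_\ell(2)/\Gamma_\ell(3/2)$, which already accounts for the prefactor $2^\ell\,\Gamma_\ell(2)^2/\Gamma_\ell(3/2)^2$ in the statement.

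The second step is to set up the matrix correlation structure so as to invoke Theorem \ref{OrtA}. Since $T_n^{(1)},\dots,T_n^{(\ell)}$ are i.i.d.\ with unit variance and the field is stationary, for every $z,z'$ the pair $(\dot{\bT}_n^{(\ell)}(z),\dot{\bT}_n^{(\ell)}(z'))$ is jointly Gaussian with $\E{(\dot{\bT}_n^{(\ell)}(z))_{ij}(\dot{\bT}_n^{(\ell)}(z'))_{i'j'}} = \ind{i=i'}\bigl(-\tilde r^{(n)}_{j,j'}(z-z')\bigr)$; writing $R_n(w):=(\tilde r^{(n)}_{j,j'}(w))_{j,j'\in[3]}$, this matrix is symmetric and, by Cauchy--Schwarz as in Remark \ref{Rem:Ortho} (using $\sum_{\lambda\in\Lambda_n}\lambda\lambda^T=\tfrac{n}{3}\Nn\,\Id_3$, a consequence of the invariance of $\Lambda_n$ under coordinate sign flips and permutations), has spectral radius at most $1$, so $(\Id_3-R_n(w)^2)^{1/2}$ is well defined and \eqref{CovMatrix} holds with $R(z,z')=-R_n(z-z')$.

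The third step is the computation itself. Because $\E{H_{(1)}^{(\ell,3)}(\dot{\bT}_n^{(\ell)}(z))}=0$, Theorem \ref{OrtA} with $\kappa=\sigma=(1)$ (there $\ell,n$ being $\ell,3$), together with $C_{(1)}(S)=\Tr S$, $(3/2)_{(1)}=3/2$, $C_{(1)}(\Id_\ell)=\ell$, $C_{(1)}(\Id_3)=3$, gives $\V{\mathbf{V}(\bT_n^{(\ell)};\T)[2]} = (E_n/3)^{\ell}\,\widehat{\Phi}((1))^2\,\tfrac{\ell}{18}\int_{\T\times\T}\Tr\bigl(R_n(z-z')^2\bigr)\,dz\,dz'$, and stationarity collapses the double integral to $\tfrac{\ell}{18}\int_{\T}\Tr(R_n(w)^2)\,dw$. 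Now $\Tr(R_n(w)^2)=\sum_{j,j'=1}^3\tilde r^{(n)}_{j,j'}(w)^2$ and, differentiating \eqref{rn} twice, $\tilde r^{(n)}_{j,j'}(w)=-\tfrac{3}{n\Nn}\sum_{\lambda\in\Lambda_n}\lambda_j\lambda_{j'}e^{2\pi i\langle\lambda,w\rangle}$, so Parseval yields $\int_{\T}\tilde r^{(n)}_{j,j'}(w)^2\,dw=\tfrac{9}{n^2\Nn^2}\sum_{\lambda\in\Lambda_n}\lambda_j^2\lambda_{j'}^2$ and hence $\int_{\T}\Tr(R_n(w)^2)\,dw=\tfrac{9}{n^2\Nn^2}\sum_{\lambda\in\Lambda_n}\bigl(\sum_j\lambda_j^2\bigr)^2=\tfrac{9}{\Nn}$, since $\sum_j\lambda_j^2=n$ on $\Lambda_n$. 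Substituting $\widehat{\Phi}((1))^2=2^\ell\Gamma_\ell(2)^2/\Gamma_\ell(3/2)^2$ produces exactly $(E_n/3)^\ell\,2^\ell\tfrac{\Gamma_\ell(2)^2}{\Gamma_\ell(3/2)^2}\tfrac{\ell}{2\Nn}$, which in particular gives the stated identity with any admissible error of size $O(n^{-1/28+o(1)})$.

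The only point that requires genuine care is the second step, namely verifying that $R_n(w)$ really fits the hypotheses of Theorem \ref{OrtA} for every $w\in\T$ (symmetry, spectral radius $\le1$, and the distributional identity \eqref{CovMatrix}); once this is granted, the rest is the exact Parseval computation above. I expect the second-chaos variance to come out exactly, so the rate in the statement is automatically satisfied; should one instead argue through coarser bounds, the $O(n^{-1/28+o(1)})$ term would enter only through the second- and higher-moment estimates for $r^{(n)}$ (and its derivatives) taken from \cite{BM17}, which are in any case needed later to control the higher Wiener chaoses.
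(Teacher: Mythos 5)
Your argument is correct and follows essentially the same route as the paper: identify $\widehat{\Phi}((1))=2^{\ell/2}\Gamma_{\ell}(2)/\Gamma_{\ell}(3/2)$ from \eqref{ForId}, verify the matrix correlation structure \eqref{CovMatrix} for the normalized Jacobians so that Theorem \ref{OrtA} (via Proposition \ref{VarTV}) reduces the variance to $(E_n/3)^{\ell}\widehat{\Phi}((1))^2\frac{\ell}{18}\int_{\T}\Tr(R_n(z)^2)\,dz$, and then compute this integral by Fourier orthogonality; the sign convention $R(z,z')=-R_n(z-z')$ is immaterial since only $R^2$ enters \eqref{RelOrtA}. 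The one genuine (and pleasant) difference is your last step: by summing over $j,j'$ before using $\sum_j\lambda_j^2=n$ you get $\int_{\T}\Tr(R_n(z)^2)\,dz=9/\Nn$ \emph{exactly}, whereas the paper evaluates each term $\frac{1}{n^2\Nn}\sum_{\lambda}\lambda_j^2\lambda_{j'}^2$ through the asymptotic $\frac15\ind{j=j'}+\frac1{15}\ind{j\neq j'}+O(n^{-1/28+o(1)})$ from \cite{Cam17}, so your computation shows the second-chaos variance is exact and the stated $O(n^{-1/28+o(1)})$ term is automatically (indeed trivially) admissible.
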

\begin{proof}
Since, for every $z,z' \in \T$, we have 
\begin{gather}\label{corr}
\E{ \tilde{\partial}_jT_n^{(i)}(z) \cdot \tilde{\partial}_{j'} T_n^{(i')}(z')} 
= \ind{i=i'} \times \left(\frac{E_n}{3}\right)^{-1} r^{(n)}_{j,j'}(z-z')
\end{gather}
we note that the matrices 
$\dot{\bT}_n^{(\ell)}(z)$ and $\dot{\bT}_n^{(\ell)}(z')$ 
are such that  
\begin{gather}\label{CovARW}
\dot{\bT}_n^{(\ell)}(z')\eqLaw\dot{\bT}_n^{(\ell)}(z)R_n(z-z')
+X_0(\Id_3-R_n(z-z')^2)^{1/2},
\end{gather}
where $X_0=X_0(z,z')$ is an independent copy of $\dot{\bT}_n^{(\ell)}(z)$ and the matrix $R_n(z-z')$ is given by
\begin{gather*}
R_n(z-z'):= \left(\tilde{r}^{(n)}_{j,j'}(z-z')\right)_{j,j'\in[3]}, \quad 
\tilde{r}^{(n)}_{j,j'}(z-z'):=\left(\frac{E_n}{3}\right)^{-1} \frac{\partial^2}{\partial z_j \partial z'_{j'}} r^{(n)}(z-z'). 
\end{gather*}
Indeed, from  \eqref{CovARW} it follows that (see also   Remark \ref{Rem:Ortho} part (a))   \begin{gather*}
\E{ \tilde{\partial}_jT_n^{(i)}(z) \cdot \tilde{\partial}_{j'} T_n^{(i')}(z') } 
= \ind{i=i'} \times  \tilde{r}^{(n)}_{j,j'}(z-z'),
\end{gather*}
which is \eqref{corr}.
In particular, the variance of the second Wiener chaos component is computed by Proposition \ref{VarTV},
\begin{eqnarray}\label{Var2TOT}
\V{\mathbf{V}(\bT_n^{(\ell)};\T)[2]}
&=&  \left(\frac{E_n}{3}\right)^{\ell} \widehat{\Phi}((1))^2
4^{-1}\bigg(\frac{3}{2}\bigg)_{(1)}^{-1} 
\frac{C_{(1)}(\Id_{\ell})}{C_{(1)}(\Id_3)} \int_{\T\times \T} C_{(1)}(R_n(z-z')^2) dzdz' \notag\\
&=& \left(\frac{E_n}{3}\right)^{\ell} \widehat{\Phi}((1))^2
4^{-1}\frac{2}{3} 
\frac{\ell}{3} \int_{\T} \Tr(R_n(z)^2) dz  \notag\\
&=& \left(\frac{E_n}{3}\right)^{\ell} \widehat{\Phi}((1))^2
\frac{\ell}{18}  \int_{\T} \Tr(R_n(z)^2) dz ,
\end{eqnarray}
where we used that $C_{(1)}(A)=\Tr(A)$ and stationarity of $\bT_n^{(\ell)}$ to reduce integrations on $\T\times \T$ to $\T$.
A direct computation gives 
\begin{gather*}
\Tr(R_n(z)^2) = \sum_{j,j'\in[3]} \left(\tilde{r}^{(n)}_{j,j'}(z)\right)^2.
\end{gather*}
Now, in view of \eqref{rn} and \eqref{rnnor}, we have
\begin{gather*}
\tilde{r}^{(n)}_{j,j'}(z) = \left(\frac{E_n}{3}\right)^{-1}
(-4\pi^2) \frac{1}{\Nn} \sum_{\lambda\in \Lambda_n} \lambda_j \lambda_{j'} e_{\lambda}(z).
\end{gather*} 
Integrating over $\T$ and using the orthogonality relation for complex exponentials on the torus
\begin{gather}\label{ortoexp}
\int_{\T} e_{\lambda}(z) dz = \ind{\lambda=0}
\end{gather}
then yields 
\begin{eqnarray*}
&& \int_{\T} \Tr(R_n(z)^2) dz \\
&=&
\int_{\T} \sum_{j,j'\in[3]} \left(\tilde{r}^{(n)}_{j,j'}(z)\right)^2 dz
= \left(\frac{E_n}{3}\right)^{-2}
\sum_{j,j'\in[3]}16\pi^4 \frac{1}{\Nn^2}\sum_{\lambda, \lambda'\in \Lambda_n} \lambda_j \lambda_{j'} \lambda'_j \lambda'_{j'}\int_{\T } e_{\lambda+\lambda'}(z)dz\\
&=& \left(\frac{E_n}{3}\right)^{-2}16\pi^4 \frac{1}{\Nn^2} \sum_{j,j'\in[3]}\sum_{\lambda \in \Lambda_n} \lambda_j^2 \lambda_{j'}^2 
= \frac{9}{n^2\Nn^2}\sum_{j,j'\in[3]}\sum_{\lambda \in \Lambda_n} \lambda_j^2 \lambda_{j'}^2 \\
&=& \frac{9}{\Nn}\frac{1}{n^2\Nn}\sum_{j,j'\in[3]}\sum_{\lambda \in \Lambda_n} \lambda_j^2 \lambda_{j'}^2 .
\end{eqnarray*}
Now, using the relation (see e.g. \cite[Appendix C]{Cam17})
\begin{gather*}
\frac{1}{n^2\Nn}\sum_{\lambda \in \Lambda_n} \lambda_j^2 \lambda_{j'}^2 = \frac{1}{5}\ind{j=j'} + \frac{1}{15}\ind{j\neq j'} + O(n^{-1/28+o(1)})
\end{gather*}
gives 
\begin{gather*}
\int_{\T} \Tr(R_n(z)^2) dz =
\frac{9}{\Nn}   \left(\frac{3}{5}+\frac{6}{15} +  O(n^{-1/28+o(1)})\right)=
\frac{9}{\Nn}\left(1+ O(n^{-1/28+o(1)})\right), 
\end{gather*}
so that, computing $\widehat{\Phi}((1))=2^{\ell/2}\frac{\Gamma_{\ell}(2)}{\Gamma_{\ell}(\frac{3}{2})}$ from \eqref{ForId} gives by \eqref{Var2TOT}
\begin{eqnarray*}
\V{\mathbf{V}(\bT_n^{(\ell)};\T)[2]}
&=& \left(\frac{E_n}{3}\right)^{\ell} \widehat{\Phi}((1))^2
\frac{\ell}{18}\frac{9}{\Nn}\left(1+ O(n^{-1/28+o(1)})\right) \\
&=&\left(\frac{E_n}{3}\right)^{\ell} 2^{\ell}  \frac{\Gamma_{\ell}(2)^2}{\Gamma_{\ell}(\frac{3}{2})^2}
\frac{\ell}{2\Nn}\left(1  + O(n^{-1/28+o(1)})\right),
\end{eqnarray*}
which finishes the proof.
\end{proof}

\noindent\underline{\textit{Higher-order chaotic components.}}
The goal of this part is to prove the following
statement, dealing with the variance of the tail of the Wiener chaos expansion of $\mathbf{V}(\bT_n^{(\ell)};\T)$. 
\begin{Prop}\label{Higher}
As $n\to \infty,\notcon{n}{0,4,7}{8}$, we have 
\begin{gather}\label{Tail}
\V{\sum_{k \geq 2} \mathbf{V} (\bT_n^{(\ell)}; \T)[2k]} 
= o\left(\V{\mathbf{V}(\bT_n^{(\ell)};\T)[2]}\right) .
\end{gather}
In particular, as $ n \to \infty, \notcon{n}{0,4,7}{8}$,  
\begin{gather}\label{dom2}
\mathbf{V}(\bT_n^{(\ell)};\T) = 
\mathbf{V}(\bT_n^{(\ell)};\T)[2]+o_{\Prob}(1),
\end{gather}
where $o_{\Prob}(1)$ denotes a sequence of random variables converging to zero in probability, that is, in the high-energy regime, the random variable $\mathbf{V}(\bT_n^{(\ell)};\T)$ is dominated in the $L^2(\Prob)$-sense by its projection on the second Wiener chaos.
\end{Prop}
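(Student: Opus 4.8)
The plan is to bound the whole tail $\sum_{k\ge2}\V{\mathbf{V}(\bT_n^{(\ell)};\T)[2k]}$ in one step and compare it to the second-chaos variance of Lemma~\ref{Var2}. By orthogonality of Wiener chaoses the tail variance equals $\sum_{k\ge2}\V{\mathbf{V}(\bT_n^{(\ell)};\T)[2k]}$. Since, by \eqref{CovARW}, $\dot{\bT}_n^{(\ell)}(z)$ and $\dot{\bT}_n^{(\ell)}(z')$ satisfy the correlation structure \eqref{CovMatrix} with $R(z,z')=R_n(z-z')$, Proposition~\ref{VarTV} (with $U=\T$ and ambient dimension $n=3$), together with stationarity of $\bT_n^{(\ell)}$, gives for every $k\ge1$
\[
\V{\mathbf{V}(\bT_n^{(\ell)};\T)[2k]}=\left(\frac{E_n}{3}\right)^{\ell}\sum_{\kappa\vdash k}\widehat{\Phi}(\kappa)^2\,4^{-k}\bigg(\frac{3}{2}\bigg)_{\kappa}^{-1}k!\,\frac{C_{\kappa}(\Id_{\ell})}{C_{\kappa}(\Id_3)}\int_{\T}C_{\kappa}\big(R_n(z)^2\big)\,dz .
\]

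The next step is a uniform bound on the zonal factor. As noted in Remark~\ref{Rem:Ortho}(a), $R_n(z)$ is symmetric positive semidefinite with spectral norm at most $1$, so $R_n(z)^2\preceq\Id_3$; by homogeneity and monotonicity of zonal polynomials on the positive semidefinite cone, $C_{\kappa}(R_n(z)^2)\le\norm{R_n(z)}_{\mathrm{op}}^{2k}C_{\kappa}(\Id_3)\le\norm{R_n(z)}_{\mathrm{op}}^{4}C_{\kappa}(\Id_3)$ for $k\ge2$. Substituting this, the factor $C_{\kappa}(\Id_3)$ cancels and the surviving coefficient is exactly $c(\kappa)=4^{-k}(3/2)_{\kappa}^{-1}k!\,C_{\kappa}(\Id_{\ell})$ from \eqref{HerONB} (with $n=3$). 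Summing over $k\ge2$, $\kappa\vdash k$ and invoking the variance expansion of Proposition~\ref{VarEx} for $\Phi(X)=\det(XX^T)^{1/2}\in L^2(\mu_X)$, which yields $\sum_{k\ge1}\sum_{\kappa\vdash k}c(\kappa)\widehat{\Phi}(\kappa)^2=\V{\Phi(X)}<\infty$ (a constant depending only on $\ell$), we obtain
\[
\sum_{k\ge2}\V{\mathbf{V}(\bT_n^{(\ell)};\T)[2k]}\;\le\;\left(\frac{E_n}{3}\right)^{\ell}\V{\Phi(X)}\int_{\T}\norm{R_n(z)}_{\mathrm{op}}^{4}\,dz .
\]

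It remains to prove $\int_{\T}\norm{R_n(z)}_{\mathrm{op}}^{4}\,dz=o(\Nn^{-1})$, since Lemma~\ref{Var2} gives $\V{\mathbf{V}(\bT_n^{(\ell)};\T)[2]}\asymp(E_n/3)^{\ell}\Nn^{-1}$; this yields \eqref{Tail}. Here I would bound $\norm{R_n(z)}_{\mathrm{op}}^{4}\le(\Tr R_n(z)^2)^2\le 9\sum_{j,j'=1}^{3}\big(\tilde{r}^{(n)}_{j,j'}(z)\big)^{4}$, so it suffices to show $\int_{\T}\big(\tilde{r}^{(n)}_{j,j'}(z)\big)^{4}\,dz=o(\Nn^{-1})$ for each $j,j'$. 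Writing $\tilde{r}^{(n)}_{j,j'}$ as the normalized exponential sum over $\Lambda_n$ with weights $\lambda_j\lambda_{j'}/n\in[-1,1]$, this is an arithmetic statement: combining the second-moment bound $\int_{\T}(\tilde{r}^{(n)}_{j,j'})^{2}\,dz=O(\Nn^{-1})$ already contained in the proof of Lemma~\ref{Var2} with the sixth integral moment estimates for $r^{(n)}$ and its second derivatives on $\T$ established in \cite{BM17}, via Cauchy--Schwarz $\int(\tilde{r}^{(n)}_{j,j'})^{4}\le\big(\int(\tilde{r}^{(n)}_{j,j'})^{2}\big)^{1/2}\big(\int(\tilde{r}^{(n)}_{j,j'})^{6}\big)^{1/2}$, one gets a power saving $\int_{\T}(\tilde{r}^{(n)}_{j,j'})^{4}\,dz=O(\Nn^{-1-\delta})$ for some $\delta>0$ in the regime $n\to\infty$, $\notcon{n}{0,4,7}{8}$ (exactly the regime in which $\Nn\to\infty$ and these estimates are available). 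Finally \eqref{dom2} follows from \eqref{Tail}, since $\mathbf{V}(\bT_n^{(\ell)};\T)-\E{\mathbf{V}(\bT_n^{(\ell)};\T)}-\mathbf{V}(\bT_n^{(\ell)};\T)[2]=\sum_{k\ge2}\mathbf{V}(\bT_n^{(\ell)};\T)[2k]$ has $L^2(\Prob)$-norm that is $o$ of the standard deviation of $\mathbf{V}(\bT_n^{(\ell)};\T)[2]$, hence $o$ of that of $\mathbf{V}(\bT_n^{(\ell)};\T)$; so after centering and normalizing the two random variables agree up to $o_{\Prob}(1)$.

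The step I expect to be the crux is the arithmetic bound $\int_{\T}(\tilde{r}^{(n)}_{j,j'})^{4}\,dz=o(\Nn^{-1})$: the remaining ingredients — orthogonality of chaoses, Proposition~\ref{VarTV}, monotonicity of zonal polynomials, and the repackaging of the tail through Proposition~\ref{VarEx} — are soft, whereas controlling these fourth- (equivalently sixth-) order additive-energy-type moments of the lattice points on $\{|\lambda|^2=n\}$ weighted by $\lambda_j\lambda_{j'}$, uniformly in $j,j'$ and in admissible $n$, is the genuine input and is precisely where one must invoke \cite{BM17}, absorb the $n^{o(1)}$ losses, and impose the congruence restriction modulo $8$.
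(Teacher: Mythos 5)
Your argument is correct in substance, but it follows a genuinely different and shorter route than the paper. The paper proves \eqref{Tail} by partitioning $\T\times\T$ into singular and non-singular pairs of cubes (Definition \ref{DefSing}) and treating the two regimes separately: on the singular set it uses Cauchy--Schwarz together with $\Leb(\mathcal{B}_Q)=O(\mathcal{R}_n(6))$, and on the non-singular set it bounds $|C_\kappa(R_n^2)|$ by $(2\tilde\rho(R_n^2))^k C_\kappa(\Id_3)$ via a Gerschgorin-type bound; since the doubled quantity $2\tilde\rho$ can exceed $1$ near the diagonal, the smallness $18\eta^2<1$ available only off the singular set is what makes the sum over $k\geq 2$ converge there. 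You instead observe that the exact spectral radius satisfies $\rho(R_n(z)^2)=\norm{R_n(z)}_{\mathrm{op}}^2\leq 1$ \emph{globally}, so that $0\leq C_\kappa(R_n(z)^2)\leq \norm{R_n(z)}_{\mathrm{op}}^{4}C_\kappa(\Id_3)$ for all $\kappa\vdash k$, $k\geq2$, and the chaos coefficients then resum to $\V{\Phi(X)}<\infty$ via Proposition \ref{VarEx}; this eliminates the cube decomposition entirely and reduces \eqref{Tail} to $\int_\T\norm{R_n(z)}_{\mathrm{op}}^4\,dz=o(\Nn^{-1})$, which you close by the same arithmetic chain as the paper ($\mathcal{R}_n(4)\leq\sqrt{\mathcal{R}_n(2)\mathcal{R}_n(6)}$ and \cite{BM17}). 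What your route buys is brevity and the exact use of Theorem \ref{OrtA}, which diagonalizes the chaoses because $\mathbf{V}$ depends only on the Jacobian; what the paper's route buys is robustness, since the singular/non-singular machinery also handles functionals containing the Dirac mass (as for nodal volumes in Section \ref{Nodal}), where no such clean global spectral bound is available.

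Three small repairs to your write-up: (i) $R_n(z)$ itself need not be positive semidefinite (it is a cross-covariance), but this is harmless since only $R_n(z)^2\preceq\Id_3$ enters, which follows from symmetry of $R_n(z)$ together with $\norm{R_n(z)}_{\mathrm{op}}\leq1$; the latter should be justified by the joint Gaussian structure underlying \eqref{CovARW} (positive semidefiniteness of the $2\times2$ block covariance), not by entrywise Cauchy--Schwarz bounds, which do not control the operator norm of a $3\times3$ matrix. (ii) The zonal bound in the literature is stated for $\rho(S)<x$; either pass to the limit in $x$, or invoke the nonnegativity of the monomial-symmetric-function coefficients of $C_\kappa$ to get the non-strict version you use. (iii) The sixth-moment estimate for the normalized second derivatives is not stated in \cite{BM17} directly; route it through $\int_\T[\tilde r^{(n)}_{j,j'}]^{2p}\,dz=O(\mathcal{R}_n(2p))$ (as the paper does, citing \cite[Lemma E.2]{Not20}) and then apply \cite{BM17} to $\mathcal{R}_n(6)$.
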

The proof of Proposition \ref{Higher} is based on a suitable partition of the torus into \textit{singular} and \textit{non-singular} pairs of cubes, as introduced in \cite{ORW08} (see also e.g. \cite{DNPR16,PR16,Not20} for further references using this approach). 

\medskip
We now describe this partition. 
For every $n \in S_3$, we partition the torus into a disjoint union of cubes of length $1/M$, where $M=M_n\geq1$ is an integer proportional to $\sqrt{E_n}$, as follows: Let $Q_0 = [0,1/M)^3$; then we consider the partition of $\T$ obtained by translating $Q_0$ in the directions $k/M,k \in \Z^3$. Denote by $\mathcal{P}(M)$ the partition of $\T$ that is obtained in this way. By construction, we have that $\textrm{card}({\mathcal{P}(M)})=M^3$.
Let us now denote by
\begin{eqnarray*}
\mathbf{V}(\bT_n^{(\ell)};\T)[4^+]:= \sum_{k\geq 2} \mathbf{V}(\bT_n^{(\ell)};\T)[2k]
\end{eqnarray*}
the projection of $\mathbf{V}(\bT_n^{(\ell)};\T)$ onto chaoses of order at least $4$. 
By linearity, we can write
\begin{equation}\label{decomp}
\mathbf{V}(\bT_n^{(\ell)};\T)[4^+] = \sum_{Q \in \mathcal{P}(M)} \mathbf{V}(\bT_n^{(\ell)};Q)[4^+] \ , \quad \ell \in [3]
\end{equation}
where $ \mathbf{V}(\bT_n^{(\ell)};Q)$ denotes the total variation of $\bT_n^{(\ell)}$ in the cube $Q$. 
From now on, we fix a small number $0 < \eta < 10^{- 10}$. In the forthcoming definition, we  define singular pairs of points and cubes. Recall the notations 
\begin{eqnarray*}
r_i^{(n)}(z):= \frac{\partial}{\partial z_i} r^{(n)}(z), \quad 
r^{(n)}_{i,j}(z):= \frac{\partial^2}{\partial z_i \partial z_j}r^{(n)}(z), \quad (i,j)\in [3]\times [3].
\end{eqnarray*}
\begin{Def}[Singular pairs of points and cubes]\label{DefSing}
A pair of points $(z,z') \in \T \times \T $ is called a \textit{singular pair of points} if one of the following inequalities is satisfied:
\begin{equation*}
|r^{(n)}(z-z')| > \eta \ ,  \quad
| r^{(n)}_i(z-z')| > \eta \sqrt{E_n/3} \ ,  \quad
|  r^{(n)}_{i,j} (z-z')| > \eta E_n/3 \end{equation*}
for $(i, j) \in [3]\times[3]$.
A pair of cubes $(Q, Q') \in \mathcal{P}(M)^2 $ is called a \textit{singular pair of cubes} if the product $Q\times Q'$ contains a singular pair of points. We denote by $\mathcal{S}=\mathcal{S}(M) \subset \mathcal{P}(M)^2$ the set of singular pairs of cubes. 
A pair of cubes $(Q,Q') \in \mathcal{S}^c$ is called \textit{non-singular}. By construction, $\mathcal{P}(M)^2 = \mathcal{S} \cup \mathcal{S}^c$. 
\end{Def}
For fixed $Q \in \mathcal{P}(M)$, let us furthermore denote by $\mathcal{B}_Q$ the union  over all cubes $Q' \in \mathcal{P}(M)$ such
that $(Q,Q') \in \mathcal{S}$. Arguing
as in  \cite[Lemma 6.3]{DNPR16}, we have that
\begin{equation}\label{LebBQ}
\Leb(\mathcal{B}_Q) = O(\mathcal{R}_n(6)) ,
\end{equation} 
where $\mathcal{R}_n(6)=\int_{\T} [r^{(n)}(z)]^6 dz$.
In view of \eqref{decomp}, we can thus split the variance into its singular and non-singular contribution as follows
\begin{gather*}
\V{\mathbf{V}(\bT_n^{(\ell)};\T)[4^+]}
= \left\{\sum_{(Q,Q') \in \mathcal{S} }
+ \sum_{(Q,Q') \in \mathcal{S}^c }\right\}
\E{\mathbf{V}(\bT_n^{(\ell)};Q)[4^+] \cdot \mathbf{V}(\bT_n^{(\ell)};Q')[4^+]}
:= \Delta_{n,1}^{(\ell)}
+ \Delta_{n,2}^{(\ell)}.
\end{gather*}
The contributions to the variance of the terms  $\Delta_{n,j}^{(\ell)}, j=1,2$ are given in Lemma \ref{SingLemma} and \ref{NonSingLemma} below. The combination of both results proves Proposition \ref{Higher}.
\begin{Lem}[Singular part]\label{SingLemma}
As $n \to \infty,  \notcon{n}{0,4,7}{8}$, we have that
\begin{eqnarray*}
\left| \Delta_{n,1}^{(\ell)}\right| 
= o\left(\V{\mathbf{V}(\bT_n^{(\ell)};\T)[2]}\right).
\end{eqnarray*}
\end{Lem}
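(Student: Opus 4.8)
The plan is to bound the singular contribution $\Delta_{n,1}^{(\ell)}$ by a Cauchy-Schwarz argument combined with the smallness of the Lebesgue measure of the singular set, and then compare the resulting bound with the order $\Nn^{-1}$ of $\V{\mathbf{V}(\bT_n^{(\ell)};\T)[2]}$ established in Lemma \ref{Var2}. First I would note that, for each fixed cube $Q \in \mathcal{P}(M)$, the random variable $\mathbf{V}(\bT_n^{(\ell)};Q)[4^+]$ is the projection onto chaoses of order $\geq 4$ of $\mathbf{V}(\bT_n^{(\ell)};Q) = \int_Q \Phi(\dot{\bT}_n^{(\ell)}(z))\,dz$; by stationarity of $\bT_n^{(\ell)}$ and the chaos expansion \eqref{chaos}, its $L^2(\Prob)$-norm satisfies $\E{\mathbf{V}(\bT_n^{(\ell)};Q)[4^+]^2} \leq \E{\mathbf{V}(\bT_n^{(\ell)};Q)^2}$. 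Since $\Phi$ is a homogeneous polynomial of degree $\ell$ in the entries of a standard Gaussian matrix, $\E{\Phi(\dot{\bT}_n^{(\ell)}(z))^2}$ is a finite constant $c_{\ell,3}$ independent of $z$ and $n$, and a further Cauchy-Schwarz over $Q\times Q$ gives $\E{\mathbf{V}(\bT_n^{(\ell)};Q)^2} \leq c_{\ell,3}\,\Leb(Q)^2 = c_{\ell,3}\,M^{-6}$.

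Next I would estimate $|\Delta_{n,1}^{(\ell)}|$. By Cauchy-Schwarz applied to each summand,
\begin{eqnarray*}
|\Delta_{n,1}^{(\ell)}| \leq \sum_{(Q,Q')\in\mathcal{S}} \E{\mathbf{V}(\bT_n^{(\ell)};Q)[4^+]^2}^{1/2}\E{\mathbf{V}(\bT_n^{(\ell)};Q')[4^+]^2}^{1/2} \leq c_{\ell,3}\,M^{-6}\,\mathrm{card}(\mathcal{S}).
\end{eqnarray*}
Now, by \eqref{LebBQ}, for each $Q$ the set of cubes $Q'$ with $(Q,Q')\in\mathcal{S}$ has total Lebesgue measure $O(\mathcal{R}_n(6))$, hence its cardinality is $O(M^3 \mathcal{R}_n(6))$; summing over the $M^3$ choices of $Q$ gives $\mathrm{card}(\mathcal{S}) = O(M^6 \mathcal{R}_n(6))$. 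Therefore $|\Delta_{n,1}^{(\ell)}| = O(\mathcal{R}_n(6)) = O\!\left(\int_{\T}[r^{(n)}(z)]^6\,dz\right)$. Then I would invoke the sixth-moment estimate for $r^{(n)}$ on the three-torus from \cite{BM17}, namely $\mathcal{R}_n(6) = \int_\T [r^{(n)}(z)]^6 dz = o(\Nn^{-1})$ as $n\to\infty$ with $\notcon{n}{0,4,7}{8}$ (the relevant decay being faster than $\Nn^{-1}$, cf. the second- and sixth-moment bounds quoted in Section \ref{SecARW}). Comparing with $\V{\mathbf{V}(\bT_n^{(\ell)};\T)[2]} \asymp E_n^\ell \Nn^{-1}$ from Lemma \ref{Var2} yields $|\Delta_{n,1}^{(\ell)}| = o(E_n^\ell \Nn^{-1}) = o(\V{\mathbf{V}(\bT_n^{(\ell)};\T)[2]})$, which is the claim — modulo carrying the explicit factor $(E_n/3)^\ell$ through the homogeneity rescaling in \eqref{totdef}, which only multiplies both sides by the same constant.

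The main obstacle I anticipate is \emph{not} the Cauchy-Schwarz bookkeeping but rather making sure the input from \cite{BM17} is strong enough: one needs $\mathcal{R}_n(6)$ to be not merely $O(\Nn^{-1})$ but genuinely $o(\Nn^{-1})$ (indeed the quantitative rate $O(n^{-1/28+o(1)})$ appearing in \eqref{Var} suggests $\mathcal{R}_n(6)$ decays like a negative power of $n$ beyond $\Nn^{-1}$), and one must check that the congruence restriction $\notcon{n}{0,4,7}{8}$ is exactly the one under which this estimate is available. A secondary subtlety is the uniform-in-$z$ bound $\E{\Phi(\dot{\bT}_n^{(\ell)}(z))^2} = c_{\ell,3} < \infty$: this requires that $\dot{\bT}_n^{(\ell)}(z) \sim \mathscr{N}_{\ell\times 3}(0,\Id_\ell\otimes\Id_3)$ for every $z$, which was already recorded after \eqref{totdef}, so the constant is literally the second moment of $\det(XX^T)^{1/2}$ for a standard $\ell\times 3$ Gaussian matrix, finite because it is a polynomial functional. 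With these two points in hand the proof is a short assembly of the pieces above.
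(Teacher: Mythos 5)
Your argument is correct and is essentially the paper's own proof: Cauchy--Schwarz plus stationarity reduces the singular sum to $\mathrm{card}(\mathcal{S})\cdot\E{\mathbf{V}(\bT_n^{(\ell)};Q_0)^2}$, with $\mathrm{card}(\mathcal{S})=O(M^6\mathcal{R}_n(6))$ from \eqref{LebBQ} and the per-cube bound $O(E_n^{\ell}M^{-6})$ (the $(E_n/3)^{\ell}$ factor carried through exactly as you indicate), giving $|\Delta_{n,1}^{(\ell)}|\ll E_n^{\ell}\mathcal{R}_n(6)$. The input you were unsure about is precisely what the paper invokes, namely $\mathcal{R}_n(6)\ll \Nn^{-7/3+o(1)}$ from \cite[Eq.(1.18)]{BM17} under the congruence $\notcon{n}{0,4,7}{8}$, which is indeed $o(\Nn^{-1})$; the only small imprecision in your write-up is that $\Phi$ itself is not a polynomial, but $\Phi^2=\det(MM^T)$ is, which is all that is needed for $\E{\Phi(\mathbf{N})^2}<\infty$.
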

\begin{proof}
Using the triangle inequality, the Cauchy-Schwarz inequality, and \eqref{LebBQ}, we can write
\begin{eqnarray}\label{EstOne}
\left|\Delta_{n,1}^{(\ell)}\right|
&\leq&  \sum_{(Q,Q')\in \mathcal{S}}
\sqrt{\V{\mathbf{V}(\bT_n^{(\ell)};Q)[4^+]}}  
\sqrt{\V{\mathbf{V}(\bT_n^{(\ell)};Q')[4^+]}} \notag \\
&\ll& E_n^3 \mathcal{R}_n(6)
\cdot \V{\mathbf{V}(\bT_n^{(\ell)};Q_0)[4^+]}  , 
\end{eqnarray}
where we exploited stationarity of $\bT_n^{(\ell)}$ and where $Q_0$ denotes the cube around the origin. Now we notice that  
\begin{equation*}
\V{\mathbf{V}(\bT_n^{(\ell)};Q_0)[4^+]}  \leq \V{\mathbf{V}(\bT_n^{(\ell)};Q_0)} 
\leq \E{\mathbf{V}(\bT_n^{(\ell)};Q_0)^2} \ .
\end{equation*}
By definition of the total variation \eqref{totdef}, we can write 
\begin{eqnarray*}
\E{\mathbf{V}(\bT_n^{(\ell)};Q_0)^2}
= \left(\frac{E_n}{3}\right)^{\ell}
\int_{Q_0\times Q_0} \E{ \Phi( \dot{\bT}_n^{(\ell)}(z)  ) \Phi( \dot{\bT}_n^{(\ell)}(z')  ) 
} dzdz'.
\end{eqnarray*}
Now for every fixed $z,z'\in Q_0$, we have by the Cauchy-Schwarz inequality 
\begin{eqnarray*}
\E{ \Phi( \dot{\bT}_n^{(\ell)}(z)  ) \Phi( \dot{\bT}_n^{(\ell)}(z')  ) 
}
\leq \sqrt{\E{ \Phi( \dot{\bT}_n^{(\ell)}(z) )^2} \E{ \Phi( \dot{\bT}_n^{(\ell)}(z')  )^2
}} 
=\E{\Phi(\mathbf{N})^2}= O(1),
\end{eqnarray*}
where $\mathbf{N} \eqLaw \mathscr{N}_{\ell \times 3}(0, \Id_{\ell}\otimes \Id_3)$. Therefore, bearing in mind that $\mathrm{Leb}(Q_0)=M^{-3}=O(E_n^{-3/2})$,  it follows that 
\begin{eqnarray*}
\E{\mathbf{V}(\bT_n^{(\ell)};Q_0)^2}
= O\left(E_n^{\ell}M^{-6}\right)
= O(E_n^{\ell-3}).
\end{eqnarray*} 
Combining this with the estimate in \eqref{EstOne} yields 
$\left|\Delta_{n,1}^{(\ell)}\right| \ll E_n^3 \mathcal{R}_n(6) E_n^{\ell-3}  \ll E_n^{\ell} \mathcal{R}_n(6)$. By \cite[Eq.(1.18)]{BM17}, we have that $\mathcal{R}_n(6) \ll \Nn^{-7/3+o(1)}$, as $ n\to\infty ,\notcon{n}{0,4,7}{8}$. Combining this with the estimate in Lemma \ref{Var2} yields the desired conclusion. 
\end{proof}

\begin{Lem}[Non-singular part]\label{NonSingLemma}
As $n \to \infty,  \notcon{n}{0,4,7}{8}$, we have that
\begin{eqnarray*}
\left| \Delta_{n,2}^{(\ell)}\right| 
= o\left(\V{\mathbf{V}(\bT_n^{(\ell)};\T)[2]}\right).
\end{eqnarray*}
\end{Lem}
\begin{proof}
Using the expansion in matrix-Hermite polynomials and arguing as in the proof of Proposition \ref{VarTV}, we have that 
\begin{eqnarray*}
\left| \Delta_{n,2}^{(\ell)}\right|  
\leq 
E_n^{\ell}
\sum_{k\geq 2}
\sum_{\kappa \vdash k}
\widehat{\Phi}(\kappa)^2 4^{-k} 
\left(\frac{n}{2}\right)^{-1}_{\kappa}
k!
\frac{C_{\kappa}(\Id_{\ell})}{C_{\kappa}(\Id_3)}
\sum_{(Q,Q')\in \mathcal{S}^c}
\int_{Q\times Q'}
\left| C_{\kappa}(R_n(x-y)^2) \right|
dxdy.
\end{eqnarray*}
Now for a matrix $S \in \C^{m\times m}$, we denote by $\rho(S):=\max(|\lambda_1|,\ldots,|\lambda_m|)$, where $\lambda_i$ denote the eigenvalues of $S$. 
We now use the following two facts: (i) For every  partition $\kappa\vdash k$,   every matrix $S \in \C^{m\times m}$ and every $x$ such that $\rho(S) < x$, one has that $\left|C_{\kappa}(S)\right| \leq x^k C_{\kappa}(\Id_m)$ (see for instance \cite{BFZP}, p.197) and (ii) by  Gerschgorin's Theorem (see for instance \cite{GR:14}, p.1084), writing $S=(s_{ij})$,  
\begin{eqnarray*}
\rho(S) \leq \min\left(
\max_{i=1,\ldots,m} \sum_{j=1}^m |s_{ij}|, 
\max_{j=1,\ldots,m}
\sum_{i=1}^m |s_{ij}|
\right)=: \tilde{\rho}(S).
\end{eqnarray*}
Applying the  facts above with the symmetric matrix $S= R_n(x-y)^2$ and 
$x = 2\tilde{\rho}(S)$ yields 
\begin{eqnarray}\label{BoundB1}
\left| \Delta_{n,2}^{(\ell)}\right|  
\leq 
E_n^{\ell}
\sum_{k\geq 2}
\sum_{\kappa \vdash k}
\widehat{\Phi}(\kappa)^2 4^{-k} 
\left(\frac{n}{2}\right)^{-1}_{\kappa}
k!
C_{\kappa}(\Id_{\ell})
\sum_{(Q,Q')\in \mathcal{S}^c}
\int_{Q\times Q'}
\left(2 \tilde{\rho}(R_n(x-y)^2) \right)^{k} dxdy.
\end{eqnarray}
By definition of $\tilde{\rho}$ and the triangular inequality, we have that 
\begin{eqnarray}\label{tilderho}
2\tilde{\rho}(R_n(x-y)^2)
\leq 2\max_{i=1,2,3}
\sum_{j,l=1}^3
\left| \tilde{r}^{(n)}_{il}(x-y)\right| 
\left| \tilde{r}^{(n)}_{lj}(x-y)\right|, 
\end{eqnarray}
which is bounded by $18\eta^2<1$ on the non-singular regions. 
Combining this with the fact that we are summing over integers 
 $k\geq 2$ yields
\begin{eqnarray*}
&&\sum_{(Q,Q')\in \mathcal{S}^c}
\int_{Q\times Q'}
\left(2 \tilde{\rho}(R_n(x-y)^2) \right)^{k} dxdy\\
&=& \sum_{(Q,Q')\in \mathcal{S}^c}
\int_{Q\times Q'}
\left(2 \tilde{\rho}(R_n(x-y)^2) \right)^{k-2}
\left(2 \tilde{\rho}(R_n(x-y)^2) \right)^{2}
 dxdy \\
 &\ll&  \int_{\T}
\left( \tilde{\rho}(R_n(z)^2) \right)^{2}
 dz. \\
\end{eqnarray*}
Combining  \eqref{tilderho} with the Cauchy-Schwarz inequality and the estimate 
\begin{gather*}
\int_{\T} [\tilde{r}^{(n)}_{j,j'}(z)]^{2p} dz = O( \mathcal{R}_n(2p)), \quad p\geq1,
\end{gather*}
where the constant involved in the 'big-O' notation depends only on $p$ (see for instance \cite[Lemma E.2]{Not20}), we deduce that 
\begin{gather*}
\int_{\T}
\left( \tilde{\rho}(R_n(z)^2) \right)^{2}
 dz
 \ll \int_{\T}\max_{i,l=1,2,3} \left| \tilde{r}^{(n)}_{il}(z)\right|^2  
 \max_{j,l=1,2,3} \left| \tilde{r}^{(n)}_{jl}(z)\right|^2 dz 
 = \int_{\T} \max_{j,l=1,2,3} \left| \tilde{r}^{(n)}_{jl}(z)\right|^4 dz \ll \mathcal{R}_n(4).
\end{gather*}
Therefore, in view of the estimate \eqref{BoundB1}, 
and the fact that $\Phi \in L^2(\mu_X)$ for $X \sim\mathscr{N}_{\ell\times n}(0,\Id_{\ell}\otimes \Id_n)$, we conclude that 
\begin{gather*}
\left| \Delta_{n,2}^{(\ell)}\right|  
\ll  
E_n^{\ell} \mathcal{R}_n(4)
\sum_{k\geq 2}
\sum_{\kappa \vdash k}
\widehat{\Phi}(\kappa)^2 4^{-k} 
\left(\frac{n}{2}\right)^{-1}_{\kappa}
k!
C_{\kappa}(\Id_{\ell}) 
\leq E_n^{\ell} \mathcal{R}_n(4) \E{\Phi(X)^2} 
\ll E_n^{\ell} \mathcal{R}_n(4).
\end{gather*} 
Now, the Cauchy-Schwarz inequality implies that
\begin{gather*}
\mathcal{R}_n(4) = \int_{\T} [r^{(n)}(z)]^4 dz
\leq \left( \int_{\T} [r^{(n)}(z)]^2 dz 
\int_{\T} [r^{(n)}(z)]^6 dz 
\right)^{1/2} = \sqrt{\mathcal{R}_n(2) \mathcal{R}_n(6)}.
\end{gather*}
Using the estimates (see \cite[Eq.(1.16) and (1.18)]{BM17})
\begin{gather*}
\mathcal{R}_n(2) = \frac{1}{\Nn} , \quad 
\mathcal{R}_n(6) \ll \Nn^{-7/3+o(1)} , \quad n\to\infty ,\notcon{n}{0,4,7}{8}
\end{gather*}
implies that $\sqrt{\mathcal{R}_n(2)\mathcal{R}_n(6)} \ll \Nn^{-5/3+o(1)}$. The fact that $E_n^{\ell}\Nn^{-5/3+o(1)}=o\left(\V{\mathbf{V}(\bT_n^{(\ell)},\T)}\right)$ follows from the order of the variance of the second Wiener chaos in Lemma \ref{Var2}.
 \end{proof}

\medskip
\noindent\underline{\textit{Limiting distribution of the normalised total variation.}}
The next proposition establishes a CLT in the high-frequency regime for normalised version of the second chaotic component of the total variation
\begin{gather}\label{nor2}
\widehat{\mathbf{V}}(\bT_n^{(\ell)};\T)[2]:=\frac{\mathbf{V}(\bT_n^{(\ell)};\T)[2]}{\V{\mathbf{V}(\bT_n^{(\ell)};\T)[2]}^{1/2}}.
\end{gather}
\begin{Prop}\label{Lim2}
As $n\to \infty, \notcon{n}{0,4,7}{8}$, we have 
\begin{gather*}
\widehat{\mathbf{V}}(\bT_n^{(\ell)};\T)[2]  \Law \mathscr{N}(0,1).
\end{gather*}
\end{Prop}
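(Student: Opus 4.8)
The plan is to reduce the normalised second chaotic component \eqref{nor2} to a normalised sum of independent and identically distributed random variables and then invoke the classical central limit theorem. Inserting the explicit expression for $H_{(1)}^{(\ell,3)}$ from \eqref{HerPol} (taken with $n=3$) into \eqref{WC2} gives
\[
\mathbf{V}(\bT_n^{(\ell)};\T)[2] = \left(\frac{E_n}{3}\right)^{\ell/2}\frac{\widehat{\Phi}((1))}{6}\sum_{i=1}^{\ell}\sum_{j=1}^{3}\int_{\T}H_2\bigl(\tilde{\partial}_j T_n^{(i)}(z)\bigr)\,dz,
\]
and the crux is that the inner double sum collapses. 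Expanding $\tilde{\partial}_j T_n^{(i)}$ via \eqref{Tn}--\eqref{dernor} and integrating term by term against \eqref{ortoexp}, one finds $\int_{\T}H_2(\tilde{\partial}_j T_n^{(i)}(z))\,dz = \frac{3}{n\Nn}\sum_{\lambda\in\Lambda_n}\lambda_j^2|a_\lambda^{(i)}|^2 - 1$; summing over $j\in[3]$ and using $\lambda_1^2+\lambda_2^2+\lambda_3^2 = n$ for every $\lambda\in\Lambda_n$ eliminates the frequency weights and leaves $\sum_{j=1}^{3}\int_{\T}H_2(\tilde{\partial}_j T_n^{(i)}(z))\,dz = \frac{3}{\Nn}\sum_{\lambda\in\Lambda_n}|a_\lambda^{(i)}|^2 - 3$ (equivalently $3\int_{\T}H_2(T_n^{(i)}(z))\,dz$, which is also a consequence of Green's identity together with $\Delta T_n^{(i)}=-E_nT_n^{(i)}$). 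Fixing $\Lambda_n^+\subset\Lambda_n$ containing exactly one element of each pair $\{\lambda,-\lambda\}$ (so $|\Lambda_n^+|=\Nn/2$, since $0\notin\Lambda_n$) and using $a_{-\lambda}^{(i)}=\overline{a_\lambda^{(i)}}$, one arrives at
\[
\mathbf{V}(\bT_n^{(\ell)};\T)[2] = \left(\frac{E_n}{3}\right)^{\ell/2}\frac{\widehat{\Phi}((1))}{\Nn}\sum_{i=1}^{\ell}\sum_{\lambda\in\Lambda_n^+}\bigl(|a_\lambda^{(i)}|^2-1\bigr).
\]

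The family $\xi_{i,\lambda}:=|a_\lambda^{(i)}|^2-1$, indexed by $(i,\lambda)\in[\ell]\times\Lambda_n^+$, consists of $\ell\Nn/2$ i.i.d.\ centred random variables with $\V{\xi_{i,\lambda}}=1$ (each $|a_\lambda^{(i)}|^2$ has an $\mathrm{Exp}(1)$ law) and finite moments of every order; in particular $\V{\mathbf{V}(\bT_n^{(\ell)};\T)[2]} = (E_n/3)^{\ell}\widehat{\Phi}((1))^2\ell/(2\Nn)$, in agreement with the asymptotics of Lemma \ref{Var2}. Dividing by the square root of this variance cancels the deterministic prefactor, so that $\widehat{\mathbf{V}}(\bT_n^{(\ell)};\T)[2]$ is equal in distribution to the normalised partial sum $(\ell\Nn/2)^{-1/2}\sum_{j=1}^{\ell\Nn/2}\zeta_j$ of an i.i.d.\ sequence $(\zeta_j)$ with $\zeta_j\sim\mathrm{Exp}(1)-1$. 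Since $\Nn\to\infty$ as $n\to\infty$ with $\notcon{n}{0,4,7}{8}$ (a classical fact; see e.g.\ \cite{BM17}), the sample size $\ell\Nn/2$ tends to infinity, and the classical central limit theorem yields $\widehat{\mathbf{V}}(\bT_n^{(\ell)};\T)[2]\Law\mathscr{N}(0,1)$, as claimed.

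There is no genuine analytic obstacle here: the whole argument rests on the algebraic observation that, because $H_{(1)}^{(\ell,3)}$ involves only the diagonal terms $H_2(X_{ij})$ and $\sum_{j}\lambda_j^2\equiv n$ on $\Lambda_n$, the second Wiener chaos is \emph{exactly} a recentred, rescaled sum of the independent chi-square variables $|a_\lambda^{(i)}|^2$. The only input that is not a direct computation is the divergence $\Nn\to\infty$ along the admissible arithmetic subsequence, which is what guarantees a non-degenerate Gaussian limit rather than a fixed distribution.
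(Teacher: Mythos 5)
Your argument is correct and follows essentially the same route as the paper's proof: expand $H_{(1)}^{(\ell,3)}$ into the univariate $H_2$'s via \eqref{HerPol}, use \eqref{ortoexp} and $\lambda_1^2+\lambda_2^2+\lambda_3^2=n$ to collapse the second chaos into the recentred sum $\sum_{i}\sum_{\lambda}(|a^{(i)}_{\lambda}|^2-1)$ over half of $\Lambda_n$, and conclude by the classical CLT as $\Nn\to\infty$ along $\notcon{n}{0,4,7}{8}$. Your observation that the variance identity is exact (and your Green's-formula aside) are consistent refinements, not a different method.
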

\begin{proof}
The expression of $\mathbf{V}(\bT_n^{(\ell)};\T)[2]$ is given in \eqref{WC2}. 
Normalising that expression by the square root of the order of the variance in \eqref{Var} yields
\begin{gather*}
\widehat{\mathbf{V}}(\bT_n^{(\ell)};\T)[2]
= \frac{\sqrt{2}\sqrt{\Nn}}{\sqrt{\ell}}  \int_{\T} H_{(1)}^{(\ell,3)}(\dot{\bT}_n^{(\ell)}(z)) dz.
\end{gather*}
Using \eqref{HerPol}, we can rewrite
\begin{gather*}
H_{(1)}^{(\ell,3)}(\dot{\bT}_n^{(\ell)}(z)) =  \frac{1}{6}\sum_{k=1}^{\ell} \sum_{j=1}^{3} H_2(\tilde{\partial}_jT_n^{(k)}(z)).
\end{gather*}
Now, for every $k\in [\ell]$, we
  write $H_2(u)=u^2-1$ and
 exploit once more the orthogonality relations for complex exponentials on the torus \eqref{ortoexp} in order to write 
\begin{eqnarray*}
&&\sum_{j=1}^3\int_{\T} H_2(\tilde{\partial}_jT_n^{(k)}) dz= 
\sum_{j=1}^3\int_{\T}\left[\left(\tilde{\partial}_jT_n^{(k)}(z)\right)^2 -1\right] dz \\
&=&\sum_{j=1}^3\frac{3}{n\Nn} \sum_{\lambda\in\Lambda_n}  \lambda_j^2(|a_{k,\lambda}|^2-1)
= \frac{3}{\Nn}\sum_{\lambda \in \Lambda_n}(|a_{k,\lambda}|^2-1) , 
\end{eqnarray*}
where we used that $\lambda_1^2+\lambda_2^2+\lambda_3^2=n$, so that 
\begin{eqnarray} 
\widehat{\mathbf{V}}(\bT_n^{(\ell)};\T)[2]
&=& \frac{\sqrt{2}\sqrt{\Nn}}{\sqrt{\ell}}  \int_{\T} H_{(1)}^{(\ell,3)}(\dot{\bT}_n^{(\ell)}(z)) dz =
\frac{1}{\sqrt{\ell}} 
\frac{1}{\sqrt{2}}\sum_{k=1}^{\ell} \frac{1}{\sqrt{\Nn}}\sum_{\lambda \in \Lambda_n}(|a_{k,\lambda}|^2-1) \notag \\ 
&=& \frac{1}{\sqrt{\ell}} \sum_{k=1}^{\ell}
\frac{1}{\sqrt{\Nn/2}}\sum_{\lambda\in \Lambda_n/_{\sim}}(|a_{k,\lambda}|^2-1),\label{tildaV2}
\end{eqnarray}
where $\Lambda_n/_{\sim}$ stands the equivalence classes in $\Lambda_n$ obtained by identifying $\lambda$ with $-\lambda$, so that $|\Lambda_n/_{\sim}|=\Nn/2$. Note that the random variables $\{ |a_{k,\lambda}|^2-1: \lambda\in \Lambda_n/_{\sim}\}$ are i.i.d, centred and have unit variance. The classical CLT thus implies that, as $n \to \infty, \notcon{n}{0,4,7}{8}$, 
\begin{gather*}
\sum_{k=1}^{\ell}
\frac{1}{\sqrt{\Nn/2}}\sum_{\lambda\in \Lambda_n/_{\sim}}(|a_{k,\lambda}|^2-1) \Law \sum_{k=1}^{\ell}Z_k, 
\end{gather*}
where $(Z_1,\ldots,Z_{\ell})$ is a standard Gaussian vector. The statement then follows from \eqref{tildaV2}. 
\end{proof}

\noindent\underline{\textit{End of the proof of Theorem \ref{ARW}.}}
Relation \eqref{dom2} implies that the second chaotic component of the total variation dominates the Wiener chaos expansion in \eqref{chaosARW}. In particular,  \eqref{dom2} implies that, as $n \to \infty, \notcon{n}{0,4,7}{8}$
\begin{gather*}
\V{\mathbf{V}(\bT_n^{(\ell)};\T)} =
\V{\mathbf{V}(\bT_n^{(\ell)};\T)[2]}(1+o(1)),
\end{gather*}
and that the normalised sequences of random variables 
\begin{gather*}
\left\{\widehat{\mathbf{V}}(\bT_n^{(\ell)};\T) : n \in S_3\right\}  \ , \quad 
\left\{\widehat{\mathbf{V}}(\bT_n^{(\ell)};\T)[2] : n \in S_3\right\}
\end{gather*}
defined in  \eqref{nor} and \eqref{nor2} respectively, have the same limiting distribution.
Combining this with the asymptotic variance for the second chaos in Lemma \ref{Var2} proves the variance estimate in \eqref{Var}.
Finally, the CLT in \eqref{CLT} for the total variation follows when combining \eqref{dom2} with the content of Proposition \ref{Lim2}. This concludes the proof of Theorem \ref{ARW}.

\appendix 
\section{Proof of Proposition \ref{Prop:Complete}}\label{App:Complete}
We recall that $L^2(\mu_X)=L^2(\Omega,\sigma(\mu_X),\Prob)$, where $\sigma(\mu_X)$ is the $\sigma$-field generated by   random variables of the form 
\begin{eqnarray*}
 \int_{\R} f(x) \mu_X(dx)
 \end{eqnarray*}
where $f$ is a finite linear combination of trigonometric functions of the form $\cos(ax), \sin(bx)$ with $a,b\in \R$. Since $f(x)$ is equal to the  limit  of its Taylor expansion for every $x\in \R$, and since the support of $\mu_X$ consists of at most $\ell$ points, we deduce that  $\sigma(\mu_X)$ is generated by random variables as above where $f=p$ is a polynomial. 
Our goal is now to prove that, if $F \in L^2(\mu_X)$ is such that $\E{F H_{\kappa}^{(\ell,n)}(X)} = 0$ for every $\kappa\vdash k $ and every $k\geq 0$, then $F=0$, $\Prob$-almost everywhere. In order to obtain the desired conclusion, we will use the following three facts: (i) zonal polynomials can be expanded into a finite linear combination of matrix-Hermite polynomials (see e.g \cite[Eq.(4.12)]{Ch92}), (ii) the product of finitely many zonal polynomials is a finite linear combination of zonal polynomials (see \eqref{akt}) and (iii) every monomial of the form $t_k(X):=\Tr([XX^T]^k)=s_1^{k}+\ldots+s_{\ell}^k$ (where $s_1,\ldots,s_{\ell}$ denote the eigenvalues of $XX^T$) can be represented as a linear combination of  zonal polynomials (see \eqref{Eq:Pol}). Using these three facts shows that, whenever $F$ satisfies the assumption above, then $\E{F t_{j_0}(X)^{a_0}\ldots t_{j_M}(X)^{a_M}}=0$ for every finite $M\geq 1$ and every collection of integers $j_0,\ldots,j_M\geq 0$ and $a_0,\ldots,a_M\geq 0$. In particular, writing $p(x) = \sum_{j=0}^M c_j x^j$ for a polynomial of degree $M$, one has that 
\begin{eqnarray*}
&&\E{F \exp\left(i \int_{\R} p(x) \mu_X(dx)\right)}
= \E{F \exp\left(i \sum_{j=0}^M c_j(s_1^j + \ldots+ s_{\ell}^j)\right)}
=\E{F \exp\left(i \sum_{j=0}^{M} c_j t_j(X)\right)}\\
&&= \E{F  \prod_{j=0}^M \exp\left(i c_j t_j(X)\right)}
=  \sum_{a_0,\ldots,a_M\geq 0} \frac{(ic_0)^{a_0} \cdots (ic_M)^{a_M}}{a_0!\cdots a_M!}
\E{F t_{0}(X)^{a_0}\ldots t_M(X)^{a_M}} = 0
\end{eqnarray*}
by assumption. By a standard approximation argument, we therefore deduce that $\E{F|\sigma(\mu_X)} = 0$, yielding the desired conclusion. 
\bibliographystyle{alpha}
\bibliography{Refs.bib}
\end{document}